\newcommandx{\unsure}[2]{\todo{#2}\texthl{#1}}
\newtheorem{thm}{Theorem}
\newtheorem{thmx}{Theorem}
\newtheorem{conjecture}[thmx]{Conjecture}
\newtheorem{prop}[thm]{Proposition}
\newtheorem{cor}[thm]{Corollary}
\newtheorem{lemma}[thm]{Lemma}
\newtheorem{fact}[thm]{Fact}
\newtheorem{examples}[thm]{Examples}
\theoremstyle{definition}
\newtheorem{defin}[thm]{Definition}
\theoremstyle{remark}
\newtheorem{remark}[thm]{Remark}
\newcommand{\Rea}{\mathbb R}
\newcommand{\Nat}{\mathbb N}
\newcommand{\Rat}{\mathbb Q}
\newcommand{\A}{\mathcal A}
\newcommand{\F}{\mathcal F}
\newcommand{\B}{\mathcal B}
\def \rng {\operatorname{Rng}}
\def \ker {\operatorname{Ker}}
\newcommand{\Span}{\operatorname{span}}
\def \sgn{\operatorname{sgn}}
\def \suppt {\operatorname{suppt}}
\def \dens {\operatorname{dens}}
\def \diam {\operatorname{diam}}
\def \dist {\operatorname{dist}}
\def \conv {\operatorname{conv}}
\def \Lip {\operatorname{Lip}}
\begin{document}
	
	\title{On the weak$^*$ separability of the space of Lipschitz functions}

	\author[L. Candido]{Leandro Candido \orcidlink{0000-0002-6429-3899}}
	\author[M. C\' uth]{Marek C\'uth \orcidlink{0000-0001-6688-8004}}
	\author[B. Vejnar]{Benjamin Vejnar \orcidlink{0000-0002-2833-5385}}
	\email{leandro.candido@unifesp.br}
	\email{cuth@karlin.mff.cuni.cz}
	\email{vejnar@karlin.mff.cuni.cz}
	
	\address[L.~Candido]{Universidade Federal de S\~{a}o Paulo - UNIFESP. Instituto de Ci\^{e}ncia e Tecnologia. Departamento de Matematica. S\~{a}o Jos\'e dos Campos - SP, Brasil}
	
	\address[M.~C\' uth, B.~Vejnar]{Charles University, Faculty of Mathematics and Physics, Department of Mathematical Analysis, Sokolovsk\'a 83, 186 75 Prague 8, Czech Republic}

	\subjclass[2020] {46B26, 51F30, 54E50 (primary), and 46B80, 46B20 (secondary)}
	
	\keywords{Lipschitz function, Lipschitz-free space, nonseparable Banach spaces, weak* topology}
	\thanks{L. Candido was supported by Funda\c c\~ao de Amparo \`a Pesquisa do Estado de S\~ao Paulo - FAPESP no. 2023/12916-1. M. C\'uth was supported by the GA\v{C}R project 23-04776S}
	
	\begin{abstract}We conjecture that whenever $M$ is a metric space of density at most continuum, then the space of Lipschitz functions is $w^*$-separable. We prove the conjecture for several classes of metric spaces including all the Banach spaces with a projectional skeleton, Banach spaces with a $w^*$-separable dual unit ball and locally separable complete metric spaces.
	\end{abstract}
	
	\maketitle
	
	Given a metric space $M$, the Banach space of Lipschitz functions $\Lip_0(M)$ has a natural predual, discovered and re-discovered by several authors and known under several names. The one we use in this paper is the Lipschitz-free space over $M$ denoted by $\F(M)$, others are e.g. the Arens-Eells space or the transportation cost space (we refer to \cite[Subsection 1.6]{OO19} for a more detailed discussion concerning the terminology). The study of Banach-space theoretical properties of Lipschitz-free spaces is an ongoing and very active field of research. In this paper we deal mostly with the case when the metric space $M$ is not separable, which is the topic deeply investigated e.g. by N. Kalton in \cite{Kal11} or more recently by P. H\'ajek and A. Quilis in \cite{HQ22}. Probably the most important well-known and very general property of those spaces is the observation that $\ell_1(\dens M)$ is isomorphic to a complemented subspace of $\F(M)$ proved originally in \cite{HajekNovotny}. There are several consequences of this fact, see e.g. \cite{AGP24, K24} where some of those are collected. An example of an interesting open problem is whether there exists a metric (or even Banach) space $M$ such that $\F(M)$ is not Plichko, see e.g. \cite{GMQ23, HQ23} for some recent closely connected results.
	
	The purpose of our paper is to introduce the following conjecture (when discussing $w^*$ topology on $\Lip_0(M)$, in this paper we always refer to the topology induced by its canonical predual $\F(M)$, see Remark~\ref{rem:wStar} for a discussion concerning other possible choices).
	
	\begin{conjecture}\label{conj:main}If $M$ is a metric space and $\dens M\leq 2^\omega$, then $(\Lip_0(M),w^*)$ is separable.
	\end{conjecture}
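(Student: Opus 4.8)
The plan is to reduce weak$^*$-separability of $\Lip_0(M)$ to a statement about separability of a subspace of a product, and to read the hypothesis $\dens M\le 2^\omega$ as precisely the threshold in the Hewitt--Marczewski--Pondiczery theorem. Fix a dense set $D\subseteq M$ with $|D|=\dens M\le 2^\omega$ containing the base point, and recall that the evaluation functionals $\delta_x\in\F(M)$ linearly span a dense subspace. Since $\Lip_0(M)=\bigcup_{n\in\mathbb N} nB$, where $B$ is the unit ball, and a countable (here nested) union of weak$^*$-separable subspaces is weak$^*$-separable, it suffices to show each ball $nB$ is weak$^*$-separable. On the bounded set $nB$ the weak$^*$ topology, the topology of pointwise convergence on $M$, and the topology of pointwise convergence on $D$ all coincide: the first two agree because $\closedSpan\{\delta_x:x\in M\}=\F(M)$ and we are on a bounded set, and the last two agree because a uniform Lipschitz bound lets one pass from $D$ to its closure $M$. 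Thus the restriction map $f\mapsto f|_D$ identifies $(nB,w^*)$ with a subspace $K_n$ of the product $\prod_{x\in D}[-n\,d(x,0),\,n\,d(x,0)]$ carrying the product topology.

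First I would invoke Hewitt--Marczewski--Pondiczery: as $|D|\le 2^\omega$, this product of compact intervals is separable. If separability were hereditary one would be done immediately, and this is exactly what happens when $M$ is separable (then $D$ is countable, the product is metrizable, every subspace is separable, and equivalently $\F(M)$ is separable so $(nB,w^*)$ is compact metrizable). The whole difficulty of the conjecture is therefore concentrated in the fact that separability is not hereditary in the non-metrizable product arising when $\dens M$ is uncountable: the canonical countable dense set of the ambient product need not consist of restrictions of Lipschitz functions. What one really needs is a countable family $\{g_j\}\subseteq nB$ of genuine Lipschitz functions dense in $K_n$, i.e. such that for every finite $F\subseteq D$, every realizable target on $F$, and every $\varepsilon>0$ some $g_j$ lies within $\varepsilon$ of that target on $F$. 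A naive construction indexing by finite subsets of $D$ with rational data produces $2^\omega$ rather than countably many functions, so the counting must be done more cleverly.

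I expect this last step to be the main obstacle, and it is where the structural hypotheses enter. For a Banach space with a weak$^*$-separable dual ball one should build the functions $g_j$ from a fixed countable weak$^*$-dense set $\{x_n^*\}$ of linear functionals, exploiting that every $1$-Lipschitz $f$ satisfies $f(x)=\inf_y\bigl(f(y)+\|x-y\|\bigr)$ while $\|x-y\|=\sup_n x_n^*(x-y)$, so that the relevant geometry is captured by countably many data. For a space with a projectional skeleton one should use the directed family of norm-one projections onto separable subspaces, together with the triviality of the separable case ($\F$ of a separable space is separable), to manufacture a countable dense family factoring through separable pieces via an extension/assembly argument. For locally separable complete metric spaces one should decompose $M$ into separable parts and recombine at most continuum-many local dense families through a Hewitt--Marczewski--Pondiczery-type count, taking care that the resulting functions are honestly Lipschitz on all of $M$. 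In full generality, producing such a countable dense family of Lipschitz functions from only the bound $\dens M\le 2^\omega$ is precisely the open content of the conjecture.
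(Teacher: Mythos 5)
First, a point of status: the statement you were asked to prove is Conjecture~\ref{conj:main}, which the paper does \emph{not} prove; it is left open, and the paper establishes only special cases (Corollary~\ref{cor:MbasisCase}, Theorem~\ref{thm:ball}, Theorem~\ref{thm:locSep}) together with the weaker pointwise-topology result (Theorem~\ref{thm:tauPMain}). Your proposal likewise stops short of a proof and concedes the general case, so your assessment of what remains open is broadly consistent with the paper's. Your preparatory steps are also correct as far as they go: on bounded sets $w^*$ coincides with pointwise convergence on $M$ and on a dense subset $D$, and the Hewitt--Marczewski--Pondiczery theorem makes the ambient product separable; this parallels Theorem~\ref{thm:tauPMain}, which proves that $(\Lip_0(M),\tau_p)$ is separable whenever $\dens M\leq 2^\omega$ and which the paper explicitly notes falls short of $w^*$-separability.

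However, your framework contains a genuine structural error that goes beyond ``the hard step is open'': the reduction ``it suffices to show each ball $nB$ is $w^*$-separable'' directs all effort at a statement that is strictly stronger than the conjecture and is provably false under its hypotheses. Since each $nB$ is a homeomorphic rescaling of $B$, your task is exactly $w^*$-separability of $B_{\Lip_0(M)}$, which by Proposition~\ref{prop:dualUnitBall} is equivalent to $M$ embedding isometrically into $\ell_\infty$. By \cite[Example 2.12]{AGP24}, cited in the paper right after Theorem~\ref{thm:locSep}, there is a uniformly discrete metric space $M$ with $\dens M\leq 2^\omega$ that does not even Lipschitz embed into $\ell_\infty$; for this $M$ no ball of $\Lip_0(M)$ is $w^*$-separable, and yet $M$ is complete and locally separable, so Theorem~\ref{thm:locSep} shows that $(\Lip_0(M),w^*)$ \emph{is} separable. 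Thus the countable family dense in $K_n$ that you call the ``main obstacle'' and ``precisely the open content of the conjecture'' simply does not exist in general: any countable $w^*$-dense subset of $\Lip_0(M)$ must be unbounded, and its density can never be witnessed ball by ball (this is exactly the phenomenon of Remark~\ref{rem:sepNotSep}: $w^*$-separable dual with non-$w^*$-separable dual ball). Accordingly, the paper's special-case proofs avoid balls except where ball-density genuinely holds: via Hahn--Banach, $w^*$-separability of $\Lip_0(M)$ means there are countably many Lipschitz functions separating the points of $\F(M)$ (not merely of $M$), and the paper obtains this by producing injective bounded operators from $\F(M)$ into spaces already known to have $w^*$-separable duals --- Kalton's pull-back/lifting argument (Theorem~\ref{thm:kalton11}) for Corollary~\ref{cor:MbasisCase}, and a support-based injection into $\F(\ell_\infty)$ (Proposition~\ref{prop:sufficient}) for Theorem~\ref{thm:locSep}. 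Only in the dual-space situation underlying Theorem~\ref{thm:ball} does a ball-level construction of the kind you envision (there, a lattice Stone--Weierstrass argument, Lemma~\ref{lem:sepPoints}) actually succeed.
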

	
	In case the answer is positive, we believe it would be a nice result significantly improving our knowledge of spaces of Lipschitz functions and their preduals. In the case the answer is negative, it would provide us with an example of a Lipschitz-free space which is not Plichko (and even without Markushevich basis) as we are able to prove that the answer to Conjecture~\ref{conj:main} is positive whenever $\F(M)$ is Plichko, see Corollary~\ref{cor:MbasisCase}.
	
	In this paper we prove that the answer to Conjecture~\ref{conj:main} is positive in the following cases:
	\begin{itemize}
		\item $M$ is a Banach space with a Markushevich basis (see Corollary~\ref{cor:MbasisCase}),
		\item $M$ is a Banach space with a $w^*$-separable dual unit ball (see Theorem~\ref{thm:ball}),
		\item $M$ is a complete and locally separable metric space (see Theorem~\ref{thm:locSep}).
	\end{itemize}
	The most demanding case is the last one and its proof involves some new general techniques which could potentially be further developed. Concerning its importance, note that any metric space is a subset of the completion of a discrete set of the same density, see Fact~\ref{fact:discreteSubset}. Thus, if we could drop the assumption of completeness, this would yield the general answer.
	
	Recall that $w^*$ topology on bounded sets in $\Lip_0(M)$ coincides with the topology of pointwise convergence $\tau_p$. The next result, indicating that Conjecture~\ref{conj:main} could hold, is that $(\Lip_0(M),\tau_p)$ is separable whenever $M$ is a metric space of density at most continuum, see Theorem~\ref{thm:tauPMain} (this particular result was independently proved also in the very recent preprint \cite{BKS24}, see Remark~\ref{rem:kakolTauP} for a more detailed comment).
	
	We note that our results give us quite many examples of metric spaces $M$ (e.g. $M = C([0,\omega_1])$) such that for the Banach spaces $X = \F(M)$ we are able to prove that $X^*$ is $w^*$-separable while $B_{X^*}$ is not $w^*$-separable, see Remark~\ref{rem:sepNotSep}. This seems to be interesting, because in past it was open whether such an example of a Banach space even exists, see \cite{DS79}, and even recently there occur papers dealing with constructions of such spaces, see e.g. \cite{AP14}.
	
	Interested in the $w^*$-separability of Banach spaces, we obtain also two more results. Namely,
	\begin{itemize}
		\item there is a linear isometric embedding of $\left(\bigoplus_{2^{\omega}}\ell_\infty\right)_{\ell_1}$ into $\ell_\infty$ (see Proposition~\ref{prop:ell1Injection});
		\item Lipschitz-free spaces over $\ell_\infty$ and over its $c_0(2^\omega)$-sum $(\bigoplus_{2^{\omega}}\ell_\infty)_{c_0}$ are linearly isomorphic (see Theorem~\ref{thm:c0Andellinfty}).
	\end{itemize}
	Their connection to Conjecture~\ref{conj:main} is e.g. that they imply that $(\Lip_0(M),w^*)$ is separable whenever there exists a bounded linear injective operator $T:\F(M)\to X$, where $X$ is one of the spaces $\left(\bigoplus_{2^{\omega}}\ell_\infty\right)_{\ell_1}$ or $\left(\bigoplus_{2^{\omega}}\ell_\infty\right)_{c_0}$, see Corollary~\ref{cor:reduction}. Also, the first result implies that the class of Banach spaces with a $w^*$-separable duals is preserved by $\ell_1(2^\omega)$-sums, see Corollary~\ref{cor:preservation}.
	
	\begin{remark}\label{rem:wStar}
		As mentioned above, when discussing $w^*$ topology on $\Lip_0(M)$, in this paper we always refer to the topology induced by its canonical predual $\F(M)$. However, it might be the case that this is actually the unique choice as it is open whether any linear isometry between $\Lip_0(M)$ and some $Y^*$, where $Y$ is a Banach space, is actually a $w^*$-$w^*$ homeomorphism, see \cite{Wea18} where it is proved this is indeed the case whenever $M$ is a bounded metric space or whenever $M$ is a Banach space.
	\end{remark}
	
	The structure of this paper is the following: in Section~\ref{sec:kalton} we start with ideas based on an argument by N. Kalton which initiated our interest in Conjecture~\ref{conj:main}. In Section~\ref{sec:pointwise} we concentrate our results connected with the topology of pointwise convergence. In Section~\ref{sec:locSep} we give the proof of probably the most involved result of the whole paper, namely that $(\Lip_0(M),w^*)$ is separable whenever $M$ is a complete and locally separable metric space of density at most continuum (this is motivated by \cite[Section 2]{AGP24}, where the proof for uniformly discrete metric spaces is given). We also exhibit an example of a uniformly discrete metric space for which on the contrary $B_{\Lip_0(M)}$ is not $w^*$ separable. Finally, in Section~\ref{sec:banach} we collect the proof that $\left(\bigoplus_{2^{\omega}}\ell_\infty\right)_{\ell_1}$ embeds linearly isometrically into $\ell_\infty$ and results implying in particular that Lipschitz-free spaces over $\ell_\infty$ and over its $c_0(2^\omega)$-sum are linearly isomorphic.
	
	Our notation is quite standard, for notions concerning Banach spaces we refer to \cite{AlbiacKalton} and for notions concerning general topology to \cite{Eng}. For construction and all the basic facts concerning Lipschitz-free spaces needed in this paper we refer to \cite[Section 2]{CDW}, for more details one may have a look at the authoritative paper \cite{GK03}, monograph \cite{WeaverBook} or the survey \cite{GodefroySurvey}. The only notation which might need to be mentioned is that given a point $x$ in a metric space and $r>0$, we denote by $B(x,r)$ the open ball.
	
	\section{Kalton's argument and its consequences}\label{sec:kalton}
	
	The starting point of our interest in Conjecture~\ref{conj:main} is the following observation which basically follows from the proof of \cite[Theorem 6.4]{Kal11}.
	
	\begin{thm}\label{thm:kalton11}Let $X$ be a Banach space and $Y\subset X$ subspace with $w^*\text{-}\dens Y^*>\omega$. Let there exist an operator $T:X\to c_0(2^\omega)$ such that $T|_Y$ is injective. Then there exists a Banach space $Z$ and a short exact sequence
		\[0\to c_0\to Z\to X\to 0\]
		which Lipschitz splits, but not linearly splits.
	\end{thm}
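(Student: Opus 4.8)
The plan is to realize $Z$ as a pullback of a Johnson--Lindenstrauss twisted sum along $T$, to read off the Lipschitz splitting from the nonlinear structure of that master sequence, and to rule out linear splitting using the hypothesis $w^*\text{-}\dens Y^*>\omega$. First I would fix the master sequence. Let $\Gamma$ be the set of branches of the dyadic tree $2^{<\omega}$, so that $|\Gamma|=2^\omega$ while $2^{<\omega}$ is countable, and let $\{b_\sigma\}_{\sigma\in\Gamma}$ be the induced almost disjoint family of infinite subsets of $2^{<\omega}$ (the node sets of the branches). Put $W=\closedSpan\big(c_0(2^{<\omega})\cup\{\chi_{b_\sigma}\setsep\sigma\in\Gamma\}\big)\subseteq\ell_\infty(2^{<\omega})$. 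Almost disjointness yields a short exact sequence $0\to c_0\to W\xrightarrow{q} c_0(2^\omega)\to 0$ with $q(\chi_{b_\sigma})=e_\sigma$, where we identify $c_0(\Gamma)=c_0(2^\omega)$; the decisive structural feature is that $W$ lives on the \emph{countable} index set $2^{<\omega}$, i.e. $W\subseteq\ell_\infty(2^{<\omega})\cong\ell_\infty$. I then set
\[
Z=\{(w,x)\in W\times X\setsep q(w)=Tx\},
\]
with maps $(w,x)\mapsto x$ and $c_0\ni w\mapsto(w,0)$; since $q$ is a quotient map, this is a short exact sequence $0\to c_0\to Z\to X\to 0$.

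For the Lipschitz splitting I would reduce everything to producing a Lipschitz section $\rho\colon c_0(2^\omega)\to W$ of $q$ with $\rho(0)=0$: given such a $\rho$, the map $x\mapsto(\rho(Tx),x)$ is a Lipschitz section of $Z\to X$, because $T$ is bounded and linear. To construct $\rho$ I see two routes. The explicit one defines, for $a\in c_0(2^\omega)$ and a node $t\in2^{<\omega}$ of length $n$, the value $\rho(a)(t)$ by selecting the branch $\sigma\succ t$ maximizing $|a_\sigma|$ and applying a soft cutoff that suppresses the value once $|a_\sigma|$ falls below a level-dependent threshold $\sim 2^{-n}$; almost disjointness then forces $\rho(a)-\sum_\sigma a_\sigma\chi_{b_\sigma}\in c_0$, so that $q\rho=\Id$, while $\|\rho(a)\|_\infty\le\|a\|_\infty$ ensures $\rho(a)\in W$. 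The conceptual route instead observes that over every separable subspace $E\subseteq c_0(2^\omega)$ the restricted sequence $0\to c_0\to q^{-1}(E)\to E\to 0$ splits with a uniform bound by Sobczyk's theorem (as $q^{-1}(E)$ is separable), and glues these uniformly bounded local sections by a locally finite Lipschitz partition of unity on $c_0(2^\omega)$ subordinated to the finite ``large-coordinate'' supports. Either way, the main obstacle is the Lipschitz estimate: controlling the jumps of the branch selection (respectively the overlaps of the partition) so that $\rho$ is genuinely Lipschitz rather than merely a bounded section.

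Finally I would rule out linear splitting, and this is where $w^*\text{-}\dens Y^*>\omega$ enters. A linear splitting provides a bounded linear section of $Z\to X$, equivalently a bounded linear lift $v\colon X\to W$ with $qv=T$. Since $T|_Y$ is injective and $qv=T$, the restriction $v|_Y\colon Y\to W\subseteq\ell_\infty(2^{<\omega})$ is injective; as $2^{<\omega}$ is countable, the coordinate functionals $\{\pi_t\circ v\setsep t\in2^{<\omega}\}$ form a countable subset of $Y^*$ separating the points of $Y$. Hence their rational linear span is $w^*$-dense in $Y^*$, so $w^*\text{-}\dens Y^*\le\omega$, contradicting the hypothesis. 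Combined with the previous paragraph, this shows that the constructed sequence Lipschitz splits but does not linearly split, as required.
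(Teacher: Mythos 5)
Your construction coincides, modulo presentation, with the paper's own proof: your $W$ is exactly the Johnson--Lindenstrauss space $JL_\infty$, your $Z$ is the pull-back of the sequence $0\to c_0\to JL_\infty\to c_0(2^\omega)\to 0$ along $T$ (the paper's Lemma~\ref{lem:pullBackLemma}), your observation that $x\mapsto(\rho(Tx),x)$ transfers a Lipschitz section of $q$ to one of $Z\to X$ is the ``Moreover'' part of that lemma, and your final contradiction --- a linear lift $v$ with $qv=T$ makes $v|_Y$ injective into $\ell_\infty(2^{<\omega})$, so countably many coordinate functionals separate the points of $Y$ and hence $w^*\text{-}\dens Y^*\leq\omega$ --- is precisely the paper's argument, phrased there as ``$JL_\infty$ has $w^*$-separable dual.'' All of that part of your proposal is correct. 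The difference is that the paper treats the one deep nonlinear ingredient, namely that the sequence $0\to c_0\to JL_\infty\to c_0(2^\omega)\to 0$ Lipschitz splits, as a known theorem of Kalton (citing \cite[Section 6]{Kal11}), whereas you attempt to prove it --- and there your proposal has a genuine gap, which you in fact half-concede when you write that ``the main obstacle is the Lipschitz estimate.''

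Neither of your two routes to the section $\rho$ works as described. In the explicit route, selecting the branch through $t$ maximizing $|a_\sigma|$ and outputting (a cutoff of) its value is not Lipschitz and not even continuous: if two branches $\sigma_1,\sigma_2$ with a common node $t$ carry $a_{\sigma_1}=1$ and $a_{\sigma_2}=-1$, an arbitrarily small perturbation of $a$ flips the selection and changes $\rho(a)(t)$ by about $2$ at every common node of the two branches, and the level threshold $\sim 2^{-n}$ is irrelevant there because the values involved are large. (Two smaller slips in the same passage: when infinitely many $a_\sigma\neq 0$ the expression $\sum_\sigma a_\sigma\chi_{b_\sigma}$ need not define an element of $\ell_\infty(2^{<\omega})$ at all, since all branches meet at low-level nodes; and $\|\rho(a)\|_\infty\leq\|a\|_\infty$ by itself does not place $\rho(a)$ in the proper closed subspace $W$.) In the gluing route, patching Sobczyk sections by a Lipschitz partition of unity requires uniform closeness of the different local sections on overlaps --- otherwise the Lipschitz constant of the glued map is not controlled --- and the existence of suitable Lipschitz partitions of unity on the nonseparable space $c_0(2^\omega)$ is itself nontrivial; none of this is addressed. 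The gap is repairable in two ways: either quote Kalton's result, as the paper does, or replace the argmax selection by the symmetric formula
\[
\rho(a)(t):=\sup\{a_\sigma\colon t\in b_\sigma\}+\inf\{a_\sigma\colon t\in b_\sigma\}.
\]
Since continuum many branches pass through $t$ but only countably many carry nonzero coordinates, the supremum is $\geq 0$ and the infimum is $\leq 0$; each term is $1$-Lipschitz in $a$ uniformly in $t$, so $\rho$ is $2$-Lipschitz with $\rho(0)=0$. Moreover, for each $\varepsilon>0$, letting $F=\{\sigma\colon|a_\sigma|\geq\varepsilon\}$ (finite) and discarding the finitely many nodes lying on two branches of $F$, one checks that $\rho(a)$ differs from $\sum_{\sigma\in F}a_\sigma\chi_{b_\sigma}$ by a finitely supported vector plus an error of norm at most $\varepsilon$; this simultaneously gives $\rho(a)\in W$ and $q(\rho(a))=a$. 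With that replacement your argument becomes a correct and essentially self-contained version of the paper's proof.
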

	
	Before explaining consequences of Theorem~\ref{thm:kalton11} and its proof, let us recall notions used in its statement and certain preliminaries.
	
	Let $X, Y, Z$ be Banach spaces. Then a \emph{short exact sequence} is given by bounded linear operators $T:X\to Y$ and $Q:Y\to Z$ such that $T$ is one-to-one, $Q$ is surjective and $\ker Q = \rng T$. In this case we write
	\[
	0\to X\stackrel{T}{\to} Y\stackrel{Q}{\to} Z\to 0.
	\]
	We say that this short exact sequence \emph{linearly splits}, resp. \emph{Lipschitz splits} if there exists a bounded linear operator, resp. Lipschitz mapping $f:Z\to Y$ such that $Qf=Id$. It is well-known that a short exact sequence linearly splits if and only if $T(X)$ is complemented in $Y$. Moreover, if it linearly (resp. Lipschitz) splits then $Y$ is linearly (resp. Lipschitz) isomorphic to the Banach space $X\oplus_1 Z$. We refer the interested reader to \cite[Chapter 2.1]{CastilloBook} and \cite[Section 2]{GK03} for more information concerning short exact sequences and their splittings. Following \cite{GK03} we say that a Banach space $X$ has the \emph{lifting property} if and only if every short exact sequence $0\to Z\to Y\to X\to 0$ which Lipschitz splits also linearly splits\footnote{This is not the original definition, but it is equivalent to it, see \cite[Proposition 2.8]{GK03}.} and recall that any separable Banach space as well as any Lipschitz-free space has the lifting property (see \cite[Lemma 2.10 and Theorem 3.1]{GK03}).
	
	A useful tool in this context are pull-back constructions. Recall that given Banach spaces $X,Y,Z$ and bounded linear operators $T:X\to Z$, $S:Y\to Z$, the \emph{pull-back of the tuple $(T,S)$} is the Banach space $PB:=\{(x,y)\in X\oplus_\infty Y\colon Tx=Sy\}\subset X\oplus_\infty Y$. The following well-known observation shows how to apply the pull-back construction in the context of short exact sequences (the proof is standard exercise and can be found together with some more information e.g. in \cite[Chapter 2]{CastilloBook}, for the proof of the Lipschitz splitting in the ``Moreover'' part, see e.g. \cite[Lemma 6.1]{Kal11}).
	
	\begin{lemma}\label{lem:pullBackLemma}Let $0\to X\stackrel{d}{\to} Y\stackrel{Q}{\to} Z\to 0$ be a short exact sequence and let $T:W\to Z$ be an operator. Let $PB$ be the pull-back of the tuple $(Q,T)$. Then there exists a short exact sequence $0\to X\stackrel{s}{\to} PB\stackrel{\pi_W}{\to} W\to 0$ such that the following diagram commutes.
		\[
		\begin{tikzcd}
			0 \arrow{r}
			& X \arrow{d}{Id} \arrow{r}{s} 
			& PB \arrow{d}{\pi_Y} \arrow{r}{\pi_W}
			& W \arrow{d}{T} \arrow{r}
			& 0\\
			0 \arrow{r}
			& X \arrow{r}{d} 
			& Y \arrow{r}{Q}
			& Z \arrow{r}
			& 0
		\end{tikzcd}
		\]
		Moreover, if the short exact sequence $0\to X\stackrel{d}{\to} Y\stackrel{Q}{\to} Z\to 0$ Lipschitz splits (resp. linearly splits), then the short exact sequence $0\to X\stackrel{s}{\to} PB\stackrel{\pi_W}{\to} W\to 0$ Lipschitz splits (resp. linearly splits).
	\end{lemma}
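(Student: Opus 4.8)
The plan is to write down the maps $s$ and $\pi_W$ by hand, verify directly that the top row is short exact and that the two squares commute, and then transfer any splitting of the bottom row to the top row. First I would set $s(x):=(d(x),0)$; since $\ker Q=\rng d$ we have $Q(d(x))=0=T(0)$, so $(d(x),0)$ genuinely lies in $PB=\{(y,w)\in Y\oplus_\infty W\colon Qy=Tw\}$, and I would let $\pi_W,\pi_Y$ denote the two coordinate projections, which are bounded since $PB$ carries the restricted $\oplus_\infty$ norm.

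For exactness, injectivity of $s$ is inherited from injectivity of $d$. Surjectivity of $\pi_W$ uses surjectivity of $Q$: given $w\in W$ I choose $y\in Y$ with $Qy=Tw$, so that $(y,w)\in PB$ maps to $w$. The inclusion $\rng s\subset\ker\pi_W$ is clear, and conversely $(y,0)\in PB$ forces $Qy=0$, hence $y\in\ker Q=\rng d$ and $(y,0)=s(x)$ for the unique $x$ with $d(x)=y$. Commutativity of the left square is the identity $\pi_Y s=d=d\circ Id$, while the right square, $Q\pi_Y=T\pi_W$, is nothing but the defining relation $Qy=Tw$ of the pull-back evaluated on an arbitrary $(y,w)\in PB$.

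For the ``Moreover'' part, starting from a splitting $f\colon Z\to Y$ with $Qf=Id$, I would define $g\colon W\to PB$ by $g(w):=(f(Tw),w)$; the computation $Q(f(Tw))=Tw$ shows $g(w)\in PB$, and $\pi_W g=Id$ exhibits $g$ as a splitting of the top row. Linearity of $f$ clearly passes to $g$, and if $f$ is only Lipschitz then $w\mapsto f(Tw)$ is Lipschitz as a composition with the bounded operator $T$, so $g$ is Lipschitz for the $\oplus_\infty$ norm; this settles both cases simultaneously. Every step here is a one-line algebraic check, so there is no substantial obstacle; the only real point of care is to keep track of the direction of the arrows and of the coordinate on which each map acts, since defining $s$ or $g$ on the wrong coordinate would silently break either the pull-back relation $Qy=Tw$ or the commutativity of the diagram.
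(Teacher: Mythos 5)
Your proof is correct and is exactly the standard argument this lemma rests on: the paper itself does not spell out a proof, deferring to \cite[Chapter 2]{CastilloBook} for the exactness/commutativity part and to \cite[Lemma 6.1]{Kal11} for the Lipschitz splitting, and your direct verification (taking $s(x)=(d(x),0)$, the coordinate projections, and the lift $g(w)=(f(Tw),w)$) is precisely that standard exercise carried out in full. All steps check out, including the key points that $\ker\pi_W\subset\rng s$ follows from $\ker Q=\rng d$, and that $g$ is Lipschitz (resp.\ bounded linear) whenever $f$ is, since $T$ is a bounded operator.
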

	
	Finally, let us recall that $JL_\infty$ is a Banach space with weak$^*$ separable dual for which there exists a short exact sequence $0\to c_0\to JL_\infty\to c_0(2^\omega)\to 0$ which Lipschitz splits, we refer the interested reader e.g. to \cite[Section 6]{Kal11} and references therein for more details.
	
	Let us now recall the argument by N. Kalton together with its consequences.
	
	\begin{proof}[Proof of Theorem~\ref{thm:kalton11}]
		Let us consider pull back as in Lemma~\ref{lem:pullBackLemma}:
		\[
		\begin{tikzcd}
			0 \arrow{r}
			& c_0 \arrow{d}{Id} \arrow{r} 
			& PB \arrow{d}{\pi_{JL_\infty}} \arrow{r}{\pi_X}
			& X \arrow{d}{T} \arrow{r}
			& 0\\
			0 \arrow{r}
			& c_0 \arrow{r} 
			& JL_\infty \arrow{r}{Q}
			& c_0(2^\omega) \arrow{r}
			& 0
		\end{tikzcd}
		\]
		Since the short exact sequence $0\to c_0\to JL_\infty\to c_0(2^\omega)\to 0$ Lipschitz splits, by Lemma~\ref{lem:pullBackLemma} the short exact sequence $0\to c_0\to PB\to X\to 0$ Lipschitz splits as well. In order to get a contradiction assume there is a linear operator $F:X\to PB$ satisfying $\pi_X\circ F = Id$. Then for $S:=\pi_{JL_\infty}\circ F$ we have $Q\circ S = Q\circ \pi_{JL_\infty}\circ F = T\circ \pi_X\circ F = T$. Since $T|_Y$ is injective, $S|_Y:Y\to JL_\infty$ is injective operator and since $JL_\infty$ has $w^*$-separable dual, we have that $Y^*$ is $w^*$-separable, contradiction.
	\end{proof}
	
	This argument, even though quite short, has (at least) two interesting consequences.
	
	The first one is not that much related to Conjecture~\ref{conj:main}, but we believe it is worth to be mentioned. Recall that a Banach space $X$ has SCP (separable complementation property) if every separable subspace of $X$ is contained in a complemented separable subspace of $X$, in particular using Sobczyk's theorem we obtain that every isomorphic copy of $c_0$ in $X$ is complemented. Examples of Banach spaces with SCP are spaces admitting a projectional skeleton, in particular all the Plichko spaces and all the WLD (weakly Lindelöf determined) spaces. We recall that the class of WLD spaces is preserved by closed subspaces, any WLD space admits Markushevich basis (and therefore an injective operator $T:X\to c_0(\dens X)$) and also that whenever $X$ is WLD, then $\dens X = \dens (X^*,w^*)$. We refer the interested reader e.g. to \cite[Chapter 17]{KubisBook} and \cite[Section 3.4 and Chapter 5]{HMVZ} for more details concerning WLD spaces, Plichko spaces, spaces with a projectional skeleton and SCP.
	
	\begin{cor}Let $X$ be a nonseparable WLD Banach space containing isomorphic copy of $c_0$. Then there exists a Banach space $Z$ which does not have SCP and it is Lipschitz-isomorphic to $X$.
		
		In particular, $\F(Z)$ is Plichko but $Z$ does not have even SCP.
	\end{cor}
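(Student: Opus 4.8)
The plan is to apply Theorem~\ref{thm:kalton11} with the quotient space taken to be $X$ itself, and then to absorb the extra $c_0$ coming from the resulting short exact sequence back into $X$. To produce the data required by Theorem~\ref{thm:kalton11}, note that if $\dens X\le 2^\omega$ one may simply take $Y=X$ together with the injective operator $T\colon X\to c_0(\dens X)\subseteq c_0(2^\omega)$ coming from a Markushevich basis of $X$. In general, using that the WLD space $X$ admits a projectional skeleton (equivalently, a PRI), I would fix a bounded projection $P\colon X\to Y$ onto a nonseparable subspace $Y=P(X)$ with $\dens Y=\omega_1\le 2^\omega$; being a closed subspace of a WLD space, $Y$ is again WLD, so $w^*\text{-}\dens Y^*=\dens Y=\omega_1>\omega$, and $Y$ carries an injective operator $T_Y\colon Y\to c_0(\dens Y)\subseteq c_0(2^\omega)$. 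Then $T:=T_Y\circ P$ satisfies $T|_Y=T_Y$, which is injective, so Theorem~\ref{thm:kalton11} applies and yields a Banach space $Z$ fitting into a short exact sequence $0\to c_0\to Z\to X\to 0$ which Lipschitz splits but does not linearly split.

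Next I would identify $Z$ up to Lipschitz isomorphism. Since the sequence Lipschitz splits, $Z$ is Lipschitz isomorphic to $c_0\oplus_1 X$. Because $X$ is WLD it has SCP, and therefore (by Sobczyk's theorem, as recalled above) the isomorphic copy of $c_0$ sitting inside $X$ is complemented, say $X\cong_{\mathrm{lin}} c_0\oplus V$. Using $c_0\cong_{\mathrm{lin}} c_0\oplus c_0$ together with the equivalence of the various finitely-many-summand direct sum norms, this gives $c_0\oplus_1 X\cong_{\mathrm{lin}} c_0\oplus X\cong_{\mathrm{lin}} X$, whence $Z$ is Lipschitz isomorphic to $X$, as claimed.

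It then remains to check the two assertions of the ``In particular'' part. If $Z$ had SCP, then the copy of $c_0$ which is the image of the first arrow of the sequence would be complemented in $Z$ (again by Sobczyk), i.e. the sequence would linearly split, contradicting Theorem~\ref{thm:kalton11}; hence $Z$ fails SCP. Finally, a Lipschitz isomorphism between $Z$ and $X$ induces, by functoriality of the free-space construction, a linear isomorphism $\F(Z)\cong_{\mathrm{lin}}\F(X)$; since $X$, being WLD, admits a Markushevich basis, $\F(X)$ is Plichko by Corollary~\ref{cor:MbasisCase}, and as the Plichko property is an isomorphic invariant, $\F(Z)$ is Plichko as well.

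The step I expect to be the main obstacle is the bookkeeping in the first paragraph: Theorem~\ref{thm:kalton11} forces the target $c_0(2^\omega)$, so when $\dens X>2^\omega$ one cannot hope for a globally injective $T$ and must instead extract from the WLD structure a \emph{complemented} nonseparable subspace $Y$ of density at most continuum on which a suitable $T$ is injective (complementedness is what lets us pre-compose with a projection rather than face the much harder problem of extending an operator into $c_0(2^\omega)$ off $Y$). The remaining steps are soft: folding the surplus $c_0$ back into $X$ and transporting the Plichko property through the free-space functor.
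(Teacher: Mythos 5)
Your proposal follows essentially the same route as the paper's own proof: apply Theorem~\ref{thm:kalton11} to produce a short exact sequence $0\to c_0\to Z\to X\to 0$ that Lipschitz splits but not linearly, identify $Z$ Lipschitz-isomorphically with $c_0\oplus_1 X$, absorb the extra $c_0$ using Sobczyk's theorem (via SCP of the WLD space $X$) and $c_0\cong c_0\oplus c_0$, and use Sobczyk once more to see that the non-complemented copy of $c_0$ inside $Z$ destroys SCP. In fact, your first paragraph is \emph{more} careful than the paper at one point: the paper simply picks a closed nonseparable $Y\subset X$ with $\dens Y\le 2^\omega$ and an injective operator $T\colon Y\to c_0(2^\omega)$, leaving implicit that Theorem~\ref{thm:kalton11} requires $T$ to be defined on all of $X$; your device of choosing $Y$ \emph{complemented} (via the projectional structure of the WLD space) and setting $T=T_Y\circ P$ supplies exactly the step the paper glosses over.

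The one genuine flaw is in your last step. Corollary~\ref{cor:MbasisCase} does not assert that $\F(X)$ is Plichko --- it asserts only that $(\Lip_0(X),w^*)$ is separable, and only under the hypothesis $\dens X\le 2^\omega$ --- so as written your final sentence rests on a non-sequitur. The fact you need is the one the paper actually invokes: if a Banach space is Lipschitz isomorphic to a Plichko space, then its Lipschitz-free space is Plichko (\cite[Corollary 2.9]{HQ22}). Applied directly to $Z$, which you have already shown to be Lipschitz isomorphic to the Plichko (indeed WLD) space $X$, this gives that $\F(Z)$ is Plichko in one step; alternatively, your detour through the linear isomorphism $\F(Z)\cong\F(X)$ and isomorphic invariance of the Plichko property works once the Plichko-ness of $\F(X)$ is justified by that same result (applied to the trivial Lipschitz isomorphism of $X$ with itself) rather than by Corollary~\ref{cor:MbasisCase}.
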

	\begin{proof}Pick a closed nonseparable subspace $Y\subset X$ with $\dens Y\leq 2^\omega$. Since $Y$ is WLD, there exists an injective linear operator $T:Y\to c_0(2^\omega)$ and so by Theorem~\ref{thm:kalton11} we obtain there is a Banach space $Z$ which is Lipschitz isomorphic to $c_0\oplus X$ but contains a non-complemented subspace isomorphic to $c_0$. Thus, $Z$ does not have (SCP) and since $c_0$ is isomorphic to a complemented subspace of $X$, the space $c_0\oplus X$ is linearly isomorphic to $X$ and therefore $X$ is Lipschitz isomorphic to $Z$.
		
		The ``In particular'' part follows since whenever $Z$ is Lipschitz isomorphic to a Plichko Banach space, then $\F(Z)$ is Plichko (see \cite[Corollary 2.9]{HQ22}).
	\end{proof}
	
	The second consequence brings us to Conjecture~\ref{conj:main}.
	
	\begin{cor}\label{cor:MbasisCase}
		Let $M$ be metric space with $\dens M \leq 2^\omega$ such that there is an injective bounded linear operator $T:\F(M)\to c_0(2^\omega)$. Then $(\Lip_0(M),w^*)$ is separable.
		
		In particular, if $M$ is a metric space with $\dens M \leq 2^\omega$ such that $\F(M)$ has Markushevich basis (which is true e.g. whenever $M$ is a Banach space with a projectional skeleton), then $(\Lip_0(M),w^*)$ is separable.
	\end{cor}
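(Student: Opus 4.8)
The plan is to read off both statements from Theorem~\ref{thm:kalton11} together with the lifting property of Lipschitz-free spaces, by a short contradiction argument. Put $X=\F(M)$, so that $X^*=\Lip_0(M)$, and assume for contradiction that $(\Lip_0(M),w^*)$ is \emph{not} separable, i.e. $w^*\text{-}\dens X^*>\omega$. I would apply Theorem~\ref{thm:kalton11} to $X$ and to the subspace $Y:=X$: the hypothesis $w^*\text{-}\dens Y^*>\omega$ is exactly our contradiction assumption, and the given injective operator $T:\F(M)\to c_0(2^\omega)$ trivially satisfies that $T|_Y=T$ is injective. The theorem then yields a Banach space $Z$ and a short exact sequence $0\to c_0\to Z\to X\to 0$ which Lipschitz splits but does not linearly split. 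On the other hand $X=\F(M)$, being a Lipschitz-free space, has the lifting property (recalled above, see \cite[Theorem 3.1]{GK03}); since $X$ occupies the quotient position in this sequence and the sequence Lipschitz splits, the lifting property forces it to linearly split as well, a contradiction. Hence $(\Lip_0(M),w^*)$ is separable.

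For the ``In particular'' statement the task reduces to producing the operator $T$. I would first record that $\dens\F(M)=\dens M\le 2^\omega$ (the space $M$ embeds isometrically into $\F(M)$, and the normalized molecules form a norm-dense set of cardinality $\dens M$). If $\F(M)$ carries a Markushevich basis, it is indexed by a set $\Gamma$ with $|\Gamma|=\dens\F(M)\le 2^\omega$; invoking the standard fact that a Banach space with a Markushevich basis indexed by $\Gamma$ admits an injective bounded linear operator into $c_0(\Gamma)$ (see \cite{HMVZ}) and composing with the canonical isometric inclusion $c_0(\Gamma)\hookrightarrow c_0(2^\omega)$, we obtain an injective bounded operator $\F(M)\to c_0(2^\omega)$, and the first part applies. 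For the parenthetical example one uses that the Lipschitz-free space over a Banach space with a projectional skeleton again admits a Markushevich basis (see \cite{HQ22} and the references therein), since any space with a projectional skeleton admits one.

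I expect no genuine difficulty in the contradiction argument itself: all the weight of the first part sits inside Theorem~\ref{thm:kalton11}, whose proof smuggles in the role of the cardinal $2^\omega$ through the space $JL_\infty$, while the corollary merely matches hypotheses and appeals to the lifting property. The points requiring care are instead the two imported facts feeding the ``In particular'' statement, namely that a Markushevich basis produces an injective operator into $c_0(\Gamma)$ and that such a basis lifts to the Lipschitz-free space; both are available in the literature. Finally, I note that one could bypass Theorem~\ref{thm:kalton11} altogether and argue directly, since the adjoint $T^*:\ell_1(2^\omega)\to\Lip_0(M)$ is $w^*$-to-$w^*$ continuous with $w^*$-dense range, so that the image of a countable $w^*$-dense subset of $\ell_1(2^\omega)$ would be $w^*$-dense in $\Lip_0(M)$; but this merely relocates the essential content to the $w^*$-separability of $\ell_1(2^\omega)$, which is precisely where the cardinal $2^\omega$ must enter in any case.
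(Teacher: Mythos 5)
Your core argument is correct and essentially identical to the paper's own proof: you apply Theorem~\ref{thm:kalton11} with $X=Y=\F(M)$ and derive a contradiction with the lifting property of Lipschitz-free spaces, and your ``In particular'' part invokes the same three imported facts the paper cites (a Markushevich basis yields an injective operator into $c_0(\dens X)$, see \cite[Theorem 5.3]{HMVZ}; spaces with a projectional skeleton admit a Markushevich basis, see \cite{Kal20}; and $\F(X)$ inherits a projectional skeleton from $X$, see \cite[Proposition 2.8]{HQ22}).

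One caveat: your closing remark is false, and it concerns the very point that makes the corollary nontrivial. In the duality $(c_0(2^\omega),\ell_1(2^\omega))$ the space $(\ell_1(2^\omega),w^*)$ is \emph{not} separable: every element of $\ell_1(2^\omega)$ has countable support, so any countable set $D\subset\ell_1(2^\omega)$ has all supports inside a single countable set $S\subset 2^\omega$, and $D$ then lies in the proper $w^*$-closed subspace $\{f\in\ell_1(2^\omega)\colon \langle f,e_\gamma\rangle=0\text{ for every }\gamma\in 2^\omega\setminus S\}$, an intersection of kernels of the $w^*$-continuous evaluations at the unit vectors $e_\gamma\in c_0(2^\omega)$; hence $\overline{D}^{\,w^*}\neq\ell_1(2^\omega)$. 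Equivalently, $c_0(2^\omega)$ is WLD, and for WLD spaces $\dens(X^*,w^*)=\dens X=2^\omega$, as recalled in Section~\ref{sec:kalton}. Consequently the ``direct'' route via $T^*$ is a dead end rather than a relocation of the content: $T^*$ does have $w^*$-dense range, but there is no countable $w^*$-dense subset of $\ell_1(2^\omega)$ to push forward. This is exactly why Kalton's argument detours through $JL_\infty$: the substance of Theorem~\ref{thm:kalton11} (and of Theorem~\ref{thm:kaltonadaptation}) is that the lifting property lets one trade an injection into $c_0(2^\omega)$ for an injection into a space whose dual \emph{is} $w^*$-separable, and that trade cannot be bypassed by taking adjoints directly.
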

	\begin{proof}Since any Lipschitz-free space has the lifting property and $w^*\text{-}\dens \F(M)^*\leq \dens \F(M) = \dens M$, Theorem~\ref{thm:kalton11} implies that $\omega \geq w^*\text{-}\dens \F(M)^* = \dens (\Lip_0(M),w^*)$.
		
		For the ``In particular'' part we note that a Banach space $X$ with a Markushevich basis admits injective linear operator into $c_0(\dens X)$ (see \cite[Theorem 5.3]{HMVZ}), spaces with a projectional skeleton admit Markushevich basis (see \cite[Theorem 1.2]{Kal20}) and that Lipschitz-free space over a Banach space with a projectional skeleton admits a projectional skeleton as well (see \cite[Proposition 2.8]{HQ22}).
	\end{proof}
	
	The argument of N. Kalton can be actually slightly modified to obtain the following, which enables us to remove the use of the space $JL_\infty$ from the proof of Corollary~\ref{cor:MbasisCase}, see Remark~\ref{rem:corFromCor} below.
	
	\begin{thm}\label{thm:kaltonadaptation}Let $X$ and $Y$ be Banach spaces and assume that there is a bounded linear injective mapping $T:Y\to X$.  If $Y$ has the lifting property (which holds e.g. if $Y$ is a Lipschitz-free space), then there is a bounded linear injective mapping $S:Y\to \F(X)$.
	\end{thm}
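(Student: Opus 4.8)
The plan is to mimic the proof of Theorem~\ref{thm:kalton11}, replacing the special space $JL_\infty$ and its Lipschitz-splitting sequence by the canonical short exact sequence attached to $\F(X)$, whose Lipschitz splitting comes for free from the (nonlinear) canonical embedding of $X$ into $\F(X)$. The obstruction this circumvents is that $X\hookrightarrow\F(X)$ is only Lipschitz, not linear, so one cannot simply compose it with $T$; the lifting property is precisely the device that upgrades a Lipschitz right inverse to a linear one.

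First I would set up the canonical sequence. Let $\delta_X\colon X\to\F(X)$ be the canonical isometric embedding and let $\beta\colon\F(X)\to X$ be the barycenter map, i.e.\ the unique bounded linear operator obtained by linearizing the Lipschitz map $\Id_X\colon X\to X$ (which fixes $0$), so that $\beta\circ\delta_X=\Id_X$ and $\|\beta\|=1$. In particular $\beta$ is surjective, so setting $Z:=\ker\beta$ we obtain a short exact sequence
\[0\to Z\to\F(X)\xrightarrow{\beta}X\to 0\]
which Lipschitz splits, since $\delta_X$ is a Lipschitz right inverse of $\beta$.

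Next I would pull back along $T$. Applying Lemma~\ref{lem:pullBackLemma} to the sequence above together with the operator $T\colon Y\to X$, with $PB=\{(w,y)\in\F(X)\oplus_\infty Y\colon \beta(w)=T(y)\}$, yields a short exact sequence
\[0\to Z\xrightarrow{s}PB\xrightarrow{\pi_Y}Y\to 0\]
together with the commuting relation $\beta\circ\pi_{\F(X)}=T\circ\pi_Y$; by the ``Moreover'' part of the lemma this new sequence Lipschitz splits as well. Since $Y$ has the lifting property, it therefore linearly splits, so there is a bounded linear operator $F\colon Y\to PB$ with $\pi_Y\circ F=\Id_Y$.

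Finally I would define $S:=\pi_{\F(X)}\circ F\colon Y\to\F(X)$, a bounded linear operator, and check injectivity via
\[\beta\circ S=\beta\circ\pi_{\F(X)}\circ F=T\circ\pi_Y\circ F=T.\]
As $T$ is injective, $S(y)=0$ forces $T(y)=\beta(S(y))=0$ and hence $y=0$, so $S$ is the desired injection. The only genuinely non-formal input is the existence of the Lipschitz splitting, which is exactly where the geometry of $\F(X)$ enters through $\delta_X$; once this is in place, everything else is diagram chasing paralleling the contradiction step of Theorem~\ref{thm:kalton11}, and the point to verify carefully is simply that $\beta$ is surjective (immediate from $\beta\circ\delta_X=\Id_X$) so that the initial sequence is genuinely short exact.
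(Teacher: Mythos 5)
Your proof is correct and is essentially identical to the paper's: both pull back the canonical Lipschitz-split sequence $0\to W\to\F(X)\xrightarrow{\beta}X\to 0$ along $T$, invoke the lifting property of $Y$ to upgrade the Lipschitz splitting to a linear one, and define $S:=\pi_{\F(X)}\circ F$, verifying injectivity from $\beta\circ S=T$. The only cosmetic difference is that you construct $\beta$ and the Lipschitz splitting explicitly via $\delta_X$, whereas the paper cites \cite[p.~125]{GK03} for this fact.
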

	\begin{proof}
		Recall that there exists a short exact sequence $0\to W\to \F(X)\stackrel{\beta}{\to} X\to 0$ which Lipschitz splits (see \cite[p. 125]{GK03}). Consider pull back of this sequence given as in Lemma~\ref{lem:pullBackLemma}:
		\[
		\begin{tikzcd}
			0 \arrow{r}
			& W \arrow{d}{Id} \arrow{r} 
			& PB \arrow{d}{\pi_{\F(X)}} \arrow{r}{\pi_{Y}}
			& Y \arrow{d}{T} \arrow{r}
			& 0\\
			0 \arrow{r}
			& W \arrow{r} 
			& \F(X) \arrow{r}{\beta}
			& X \arrow{r}
			& 0
		\end{tikzcd}
		\]
		Since the short exact sequence $0\to W\to \F(X)\to X\to 0$ Lipschitz splits, by Lemma~\ref{lem:pullBackLemma} the short exact sequence $0\to W\to PB\to Y\to 0$ Lipschitz splits as well. Since $Y$ has the lifting property, we have that $0\to W\to PB\to Y\to 0$ linearly splits. Let $F:Y\to PB$ be a bounded linear mapping such that $\pi_{Y}\circ F = Id$. The desired operator is given by $S:=\pi_{\F(X)}\circ F$. We note that $\beta\circ S = \beta\circ \pi_{\F(X)}\circ F = T\circ \pi_{Y}\circ F = T$ which shows that $S$ is is injective.
	\end{proof}
	
	\begin{cor}\label{cor:intoReduction}
		Let $X$ be a Banach space and $M$ be a metric space such that there is a bounded linear injective mapping 
		$T:\F(M)\to X$. If $(\Lip_0(X),w^*)$ is separable, then  $(\Lip_0(M),w^*)$ is separable.
	\end{cor}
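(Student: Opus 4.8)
The plan is to reduce everything to Theorem~\ref{thm:kaltonadaptation} and then pass to adjoints. Since $\F(M)$ is a Lipschitz-free space, it enjoys the lifting property, so I would apply Theorem~\ref{thm:kaltonadaptation} with $Y=\F(M)$ to the given injective operator $T\colon\F(M)\to X$; this produces a bounded linear injective operator $S\colon\F(M)\to\F(X)$. This is the crucial step, and in fact the only place where the real content of the earlier results is used — everything afterwards is soft functional analysis.

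Next I would consider the adjoint $S^*\colon\F(X)^*\to\F(M)^*$, that is, $S^*\colon\Lip_0(X)\to\Lip_0(M)$. As the adjoint of a bounded operator, $S^*$ is automatically $w^*$-$w^*$ continuous. Moreover, injectivity of $S$ is equivalent to $S^*$ having $w^*$-dense range: indeed $\langle S^*\varphi,\mu\rangle=\langle\varphi,S\mu\rangle$ for $\varphi\in\Lip_0(X)$ and $\mu\in\F(M)$, so an element $\mu\in\F(M)$ annihilates $\rng S^*$ exactly when $S\mu=0$, which by injectivity of $S$ forces $\mu=0$; hence the pre-annihilator of $\rng S^*$ is trivial and $\rng S^*$ is $w^*$-dense in $\Lip_0(M)$.

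Finally, I would fix a countable $w^*$-dense set $D\subseteq\Lip_0(X)$, which exists by the hypothesis that $(\Lip_0(X),w^*)$ is separable. Using the $w^*$-$w^*$ continuity of $S^*$ together with the elementary fact that a continuous image of a dense set lands in the closure of its image, I obtain $S^*(\Lip_0(X))=S^*\bigl(\overline{D}^{\,w^*}\bigr)\subseteq\overline{S^*(D)}^{\,w^*}$. Taking $w^*$-closures and invoking the $w^*$-density of $\rng S^*$ gives $\Lip_0(M)=\overline{\rng S^*}^{\,w^*}\subseteq\overline{S^*(D)}^{\,w^*}$, so the countable set $S^*(D)$ is $w^*$-dense in $\Lip_0(M)$, which is precisely the desired separability.

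I do not expect any genuine obstacle beyond correctly invoking Theorem~\ref{thm:kaltonadaptation} to manufacture the injection into $\F(X)$; the only point requiring a little care is conceptual, namely that $w^*$-separability is transferred here through a $w^*$-$w^*$ continuous map with $w^*$-dense range rather than through ordinary metric separability, and one must check that the image of a countable $w^*$-dense set remains $w^*$-dense.
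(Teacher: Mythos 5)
Your proposal is correct and follows essentially the same route as the paper: invoke Theorem~\ref{thm:kaltonadaptation} (using the lifting property of the Lipschitz-free space $\F(M)$) to get an injective operator $S\colon\F(M)\to\F(X)$, then transfer $w^*$-separability through the $w^*$-$w^*$ continuous adjoint $S^*$, whose range is $w^*$-dense by injectivity of $S$. The only difference is that you spell out the pre-annihilator and dense-image details which the paper leaves implicit.
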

	\begin{proof}
		By Theorem \ref{thm:kaltonadaptation} there exists an injective operator $S:\F(M)\to \F(X)$. Thus, since $S^*:\Lip_0(X)\to \Lip_0(M)$ is $w^*$-$w^*$ continuous and its range is $w^*$-dense, if $(\Lip_0(X),w^*)$ is separable, then  $(\Lip_0(M),w^*)$ is separable.
	\end{proof}
	
	\begin{remark}\label{rem:corFromCor}One may deduce Corollary~\ref{cor:MbasisCase} from Corollary~\ref{cor:intoReduction}. The only fact which we need to realize is that $(\Lip_0(c_0(2^\omega),w^*)$ is separable which follows from well-known results. Indeed, $c_0(2^\omega)$ is Lipschitz isomorphic to a subset of $\ell_\infty$ by the result of \cite{AL78} (see also \cite[Theorem 5.4]{Kal11} for a more general result), so $\F(c_0(2^\omega))$ is linearly isomorphic to a subspace of $\F(\ell_\infty)$ which in turn embeds isometrically into $\ell_\infty$ (see \cite[Proposition 5.1]{Kal11}), so there is an injective linear operator $T:\F(c_0(2^\omega))\to \ell_\infty$ and since $\ell_\infty$ has $w^*$-separable dual, the same holds for $\F(c_0(2^\omega))$.
	\end{remark}
	
	\section{Topology of pointwise convergence}\label{sec:pointwise}
	
	We have observed that $(\Lip_0(X),w^*)$ is separable whenever $X$ is a Banach space with a projectional skeleton and $\dens X\leq 2^\omega$ (see Corollary~\ref{cor:MbasisCase}). In this section we present two more results which indicate the answer to Conjecture~\ref{conj:main} could be positive. Recall that $w^*$-topology on the space of Lipschitz functions coincides with the topology of pointwise convergence. The first main result of this section is the following, which e.g. shows that condition (iv) in \cite[Proposition 2.10]{AGP24} is satisfied in every metric space of density continuum.
	
	\begin{thm}\label{thm:tauPMain}Let $M$ be a metric space with $\dens M\leq 2^\omega$. Then $(\Lip_0(M),\tau_p)$ is separable.
	\end{thm}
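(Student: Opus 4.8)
The plan is to trade the topology $\tau_p$ for the weak topology of a dual pair and then build a concrete countable total family. First observe that for metric spaces $\dens M\le 2^\omega$ is the same as $|M|\le 2^\omega$ (every point is a limit of a sequence from a fixed dense set of size $\le 2^\omega$, so $|M|\le(2^\omega)^\omega=2^\omega$). Fix the base point $0\in M$ and put $E:=\Span\{\delta_x\setsep x\in M\}\subseteq\F(M)$. The evaluation pairing $\langle F,\delta_x\rangle=F(x)$ makes $\langle\Lip_0(M),E\rangle$ a separating dual pair, and by definition $\tau_p=\sigma(\Lip_0(M),E)$. Hence a countable set has $\tau_p$-dense linear span exactly when it separates the points of $E$, and its rational combinations then form a countable $\tau_p$-dense subset. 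By the bipolar theorem a sequence $(F_n)\subseteq\Lip_0(M)$ separates $E$ iff the vectors $v_x:=(F_n(x))_n\in\Rea^\omega$, $x\in M\setminus\{0\}$, are linearly independent. So it suffices to produce countably many Lipschitz functions whose evaluation vectors are linearly independent.

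The next reduction turns linear independence into mere point separation. Suppose we have a countable family $(\psi_k)$ of \emph{bounded} Lipschitz functions with $\psi_k(0)=0$ separating the points of $M$. Then every nonconstant monomial $\prod_k\psi_k^{m_k}$ (finite multi-index) is again bounded and Lipschitz and vanishes at $0$, so it lies in $\Lip_0(M)$, and there are only countably many of them. For distinct points $x_1,\dots,x_m\in M\setminus\{0\}$ the induced evaluation characters $p\mapsto p\big((\psi_k(x_i))_k\big)$ on the polynomial algebra are pairwise distinct (by separation) and distinct from the character at $0$, hence linearly independent; a short computation shows their restrictions to the nonconstant monomials remain independent. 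Thus the evaluation vectors of the monomials are linearly independent and, by the previous paragraph, $(\Lip_0(M),\tau_p)$ is separable. This is a Stone--Weierstrass flavoured step.

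The heart of the proof is therefore the construction of a countable point-separating family of bounded Lipschitz functions, and this is where $|M|\le 2^\omega$ enters. The naive idea---fix an injection of $M$ into $\{0,1\}^\omega$ and take truncated distances to the bit-sets---fails, because an arbitrary code produces sets with large topological boundary, so the distance functions degenerate. The remedy is to respect the metric: take a $\sigma$-discrete base $\bigcup_n\mathcal B_n$ of $M$ (Bing's theorem), of cardinality $\le w(M)=\dens M\le 2^\omega$. For $B\in\mathcal B_n$ set $f_B(x)=\min(1,\dist(x,M\setminus B))$; since $\mathcal B_n$ is discrete its members are pairwise disjoint and locally separated, so the assignment $g_n(x)=(\text{the }B\ni x,\ f_B(x))$ is a Lipschitz map of $M$ into the metric hedgehog $J(\kappa_n)$ with $\kappa_n=|\mathcal B_n|\le 2^\omega$, and $(g_n)_n$ separates points because the base does.

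Finally, the hedgehog $J(\kappa)$ with $\kappa\le 2^\omega$ carries an explicit countable separating family of $1$-Lipschitz functions: fixing an injection $\alpha\mapsto(b_k(\alpha))_k$ of the spine set into $\{0,1\}^\omega$, the maps $(t,\alpha)\mapsto t$ and $(t,\alpha)\mapsto t\,b_k(\alpha)$ are $1$-Lipschitz (each code bit is switched on only through the factor $t$, which is small precisely where distinct spines meet) and separate the points of $J(\kappa)$. Composing these with the Lipschitz maps $g_n$ yields the sought countable separating family $(\psi_k)$ on $M$. I expect the main obstacle to be exactly this last construction---extracting genuinely Lipschitz, not merely continuous, separating functions from a non-separable space---and in particular the interplay between the discreteness of the base, which keeps the Lipschitz constants finite, and the cardinality bound $\kappa\le 2^\omega$, which is what lets the spine labels be coded into $\{0,1\}^\omega$.
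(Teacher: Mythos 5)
Your proposal is correct, and it follows the paper's two-step skeleton---first reduce $\tau_p$-separability to producing countably many Lipschitz functions separating the points of $M$, then construct such a family using the bound $\dens M\le 2^\omega$---but it implements both steps with genuinely different tools. For the reduction, the paper proves a lattice Stone--Weierstrass lemma (Lemma~\ref{lem:sepPoints}): finite $\max/\min$ combinations of functions from a separating subspace, together with constants, interpolate any Lipschitz function on any finite set and hence are $\tau_p$-dense; this requires no boundedness and comes with a quantitative version (separation uniformly with constant $C$) that the paper reuses to control the dual unit ball (Proposition~\ref{prop:LipDual}, Theorem~\ref{thm:ball}). You instead work in the dual pair $\langle \Lip_0(M),\Span\,\delta(M)\rangle$ and take monomials in a countable family of \emph{bounded} separating functions, deducing linear independence of the evaluation vectors from independence of characters; this is a valid alternative (your subtraction of $(\sum_i a_i)\,\mathrm{ev}_0$ correctly compensates for the absent constant monomial), but it genuinely needs boundedness---products of unbounded Lipschitz functions need not be Lipschitz---and you should also normalize the composed functions by subtracting their values at the base point so that the monomials lie in $\Lip_0(M)$, as your second step assumes; both requirements are met by your construction, so this is only a presentational gap. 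For the construction itself, the paper uses paracompactness to get locally finite covers of mesh $1/n$, codes the relevant finite index sets by clopen subsets of the Cantor set, and takes distance functions to the closed unions $F_{n,N}$; you use a $\sigma$-discrete base (Bing's theorem), factor through $2$-Lipschitz maps into metric hedgehogs, and code the at most $2^\omega$ spine labels by binary digits via the $1$-Lipschitz functions $(t,\alpha)\mapsto t\,b_k(\alpha)$. The two constructions exploit metrizability and the cardinality hypothesis in the same way, but yours is more modular, since the hedgehog isolates the coding step and the discreteness of each base family makes the Lipschitz estimates immediate, whereas the paper's cover-based functions work directly with overlapping, merely locally finite families.
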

	
	\begin{remark}\label{rem:cpm}Note that trivially $\Lip_0(M)$ is a dense subset of $C_p(M):=(C(M),\tau_p)$ for any metric space $M$ (because given values of a continuous function on a finite set, the resulting function defined on this finite set is Lipschitz and we may extend it to the whole space $M$). Thus, from Theorem~\ref{thm:tauPMain} we obtain that $C_p(M)$ is separable whenever $M$ is a metric space with $\dens M\leq 2^\omega$. This result can be deduced also from certain results concerning the theory of $C_p(M)$-spaces\footnote{Indeed, by \cite[Problem 174]{TkachukBook} we have that given a topological space $X$, $C_p(X)$ is separable iff there is a continuous bijection of $X$ into a second countable space and by \cite[Theorem 1.2]{OP23} any metric space of density continuum can be mapped by a continuous bijection onto the Hilbert cube $[0,1]^\omega$.}, but our arguments gives another insight into this.
	\end{remark}
	
	\begin{remark}\label{rem:kakolTauP}
		Theorem~\ref{thm:tauPMain} follows also from the very recent \cite[Theorem 3.8]{BKS24} (where the case of higher cardinalities is considered as well). Our proof is different - its advantage is that it is much more self-contained, so we believe it makes sense to present it here as well. For example, our proof easily implies that $C_p(M)$ is separable, while the proof from \cite{BKS24} is using the fact that $C_p(M)$ is separable (or rather the equivalent condition mentioned in the footnote in Remark~\ref{rem:cpm}) as a tool to prove that $\Lip_0(M)$ is $\tau_p$-separable.
	\end{remark}

	Our second main result of this section is the following.
	
	\begin{thm}\label{thm:ball}Let $X$ be a (non-separable) Banach space. Then
		\[
		\dens \big(B_{\Lip_0(X)},w^*\big)\leq \dens \big(B_{X^*},w^*\big).
		\]
		In particular, $\dens \big(B_{\Lip_0(X)},w^*\big)\leq \dens X$.
	\end{thm}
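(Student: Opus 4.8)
The plan is to produce, directly, a $\tau_p$-dense subset of $B_{\Lip_0(X)}$ of cardinality $\kappa:=\dens\big(B_{X^*},w^*\big)$; since $w^*$ and $\tau_p$ agree on the bounded set $B_{\Lip_0(X)}$, this immediately yields the stated inequality. First I would fix a $w^*$-dense family $\{x^*_\alpha\}_{\alpha<\kappa}\subseteq B_{X^*}$. Because each evaluation $x^*\mapsto x^*(x)$ is $w^*$-continuous and $B_{X^*}$ is symmetric, such a family is norming in the strong sense that $\sup_{\alpha<\kappa}|x^*_\alpha(x)|=\|x\|$ for every $x\in X$; this is the only property of $\{x^*_\alpha\}$ I will use.

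The building blocks will be \emph{coordinate compositions}. For each $m\in\omega$ fix a countable $\tau_p$-dense subset $\Phi_m$ of $B_{\Lip_0(\Rea^m,\|\cdot\|_\infty)}$; this exists because $(\Rea^m,\|\cdot\|_\infty)$ is separable, so $\F(\Rea^m,\|\cdot\|_\infty)$ is separable and $\big(B_{\Lip_0(\Rea^m,\|\cdot\|_\infty)},w^*\big)$ is $w^*$-metrizable compact, hence separable. Given $m$, indices $\alpha_1<\cdots<\alpha_m<\kappa$ and $\phi\in\Phi_m$, set $P(x)=(x^*_{\alpha_1}(x),\dots,x^*_{\alpha_m}(x))$ and $g=\phi\circ P$. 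Since each $x^*_{\alpha_i}$ lies in $B_{X^*}$ we have $\|P(x)-P(y)\|_\infty\le\|x-y\|$, so $g$ is $1$-Lipschitz, and $g(0)=\phi(0)=0$; thus $g\in B_{\Lip_0(X)}$. Let $\mathcal G$ be the collection of all such $g$. Counting $m$-element subsets of $\kappa$ gives $|\mathcal G|\le\sum_{m<\omega}\kappa\cdot\omega=\kappa$.

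The heart of the argument is the $\tau_p$-density of $\mathcal G$, and the key point is that finitely many coordinates already reproduce, up to an arbitrarily small error, all mutual distances of a given finite set of points. Fix $f\in B_{\Lip_0(X)}$, a finite set $F=\{0=z_0,z_1,\dots,z_n\}\subseteq X$ and $\epsilon>0$, and put $\delta=\epsilon/4$. For each pair with $z_j\neq z_l$ I would use the norming property to pick $\alpha_{jl}<\kappa$ with $|x^*_{\alpha_{jl}}(z_j-z_l)|>\|z_j-z_l\|-\delta$; collecting these indices into $\alpha_1<\cdots<\alpha_m$ and forming $P$ as above yields $\|P(z_j)-P(z_l)\|_\infty>\|z_j-z_l\|-\delta$ for all $j,l$. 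Writing $b_j=f(z_j)$ (so $|b_j-b_l|\le\|z_j-z_l\|$), the McShane-type function $\phi_0(p)=\min_{0\le j\le n}\big(b_j+\|p-P(z_j)\|_\infty\big)$ is $1$-Lipschitz on $(\Rea^m,\|\cdot\|_\infty)$, and a direct estimate gives $b_l-\delta\le\phi_0(P(z_l))\le b_l$ for every $l$; subtracting the constant $\phi_0(0)$ produces $\psi_0\in B_{\Lip_0(\Rea^m,\|\cdot\|_\infty)}$ with $\psi_0(0)=0$ and $|\psi_0(P(z_l))-f(z_l)|\le2\delta$. Finally I would choose $\phi\in\Phi_m$ with $|\phi(P(z_l))-\psi_0(P(z_l))|<\delta$ for all $l$; then $g=\phi\circ P\in\mathcal G$ satisfies $|g(z_l)-f(z_l)|<3\delta<\epsilon$, proving the density.

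The ``In particular'' clause then follows from the standard fact that $\dens\big(B_{X^*},w^*\big)\le\dens X$: if $D\subseteq X$ is dense with $|D|=\dens X$, then for each finite $F\subseteq D$ the image of $B_{X^*}$ in $\Rea^F$ under restriction is a bounded, hence separable, set, and selecting countable dense families of witnessing functionals over all finite $F\subseteq D$ produces a $w^*$-dense subset of $B_{X^*}$ of size $\dens X$. I expect the main obstacle to be precisely the density step: one must index the approximating family by finite subsets of $\kappa$ (the coordinates) rather than by finite subsets of $X$ (the points), since the latter would only give the useless bound $|X|$; this must be combined with the error estimate that converts ``distances reproduced within $\delta$'' into ``values reproduced within $\delta$'' via the McShane extension.
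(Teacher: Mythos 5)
Your proof is correct, and it reaches the theorem by a genuinely different route from the paper's. The paper's own proof is a two-line deduction from machinery built earlier: Fact~\ref{fact:denseInDualUnitBall} turns a $w^*$-dense subset of $B_{X^*}$ of size $\kappa$ into a linear isometry $X\to\ell_\infty(\kappa)$, and Proposition~\ref{prop:dualUnitBall} (whose proof embeds $\F(X)$ isometrically into $\F(\ell_\infty(\kappa))$ and then invokes Proposition~\ref{prop:LipDual}, i.e.\ the Stone--Weierstrass-type lattice Lemma~\ref{lem:sepPoints} applied to the dual pair $(\ell_1(\kappa),\ell_\infty(\kappa))$) then bounds $\dens\big(B_{\Lip_0(X)},w^*\big)$ by $\kappa$. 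You instead work directly in $X$: your observation that a $w^*$-dense subset of $B_{X^*}$ is $1$-norming is exactly the statement, in the paper's terminology, that it separates the points of $X$ uniformly with constant $1$; but where the paper would then use the lattice-generated family of Lemma~\ref{lem:sepPoints}, you use finite coordinate maps $P$ together with a McShane extension and a countable $\tau_p$-dense subset of $B_{\Lip_0(\Rea^m,\|\cdot\|_\infty)}$. The two devices are close cousins --- your approximant $\phi_0\circ P=\min_j\bigl(b_j+\max_i|x^*_{\alpha_i}(\cdot)-x^*_{\alpha_i}(z_j)|\bigr)$ is itself a min--max lattice combination of the chosen functionals shifted by constants, so your argument can be read as a self-contained re-proof of the relevant instance of Lemma~\ref{lem:sepPoints} --- but the architectures differ. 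What the paper's organization buys is reusability and stronger byproducts: Lemma~\ref{lem:sepPoints} also drives Theorem~\ref{thm:tauPMain} and Proposition~\ref{prop:LipDual}, and Proposition~\ref{prop:dualUnitBall} upgrades the density bound to the equivalence with an isometric embedding of $\F(X)$ into $\ell_\infty(\kappa)$. What your version buys is directness and economy: it proves this one theorem from scratch, using only that $w^*$ coincides with $\tau_p$ on bounded subsets of $\Lip_0(X)$ and that Lipschitz-free spaces over separable (indeed finite-dimensional) metric spaces are separable, with no detour through $\ell_\infty(\kappa)$ or the functoriality of $\F$. Your proof of the ``In particular'' clause is the same standard fact the paper cites, argued correctly.
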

	
	Applying this result to the space $X = \ell_\infty$ (which does not admit a projectional skeleton), we obtain that not only $\Lip_0(\ell_\infty)$ is $w^*$-separable, but also its unit ball is - this particular consequence follows also from Kalton's \cite[Proposition 5.1]{Kal11}, but our argument is different (and more general) so we again believe it is worth to present the proof here.
	
	Apart from the above mentioned main results as a byproduct of our considerations we obtain also the following.
	
	\begin{prop}\label{prop:dualUnitBall}
		Let $M$ be a metric space. The following conditions are equivalent.
		\begin{enumerate}[label=(\alph*)] 
			\item\label{it:isom} $M$ isometrically embeds into $\ell_\infty(\kappa)$,
			\item\label{it:Fisom} $\F(M)$ linearly and isometrically embeds into $\ell_\infty(\kappa)$,
			\item\label{it:ballSep} $\dens (B_{\Lip_0(M)},w^*)\leq \kappa$.
		\end{enumerate}
	\end{prop}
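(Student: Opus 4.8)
The plan is to prove the cycle \ref{it:Fisom}$\Rightarrow$\ref{it:isom}, \ref{it:Fisom}$\Leftrightarrow$\ref{it:ballSep}, and \ref{it:isom}$\Rightarrow$\ref{it:ballSep}, the last implication carrying essentially all of the content. Throughout I assume $\kappa\geq\omega$ and use the following \emph{general fact}, valid for every Banach space $Z$: the space $Z$ embeds linearly isometrically into $\ell_\infty(\kappa)$ if and only if $B_{Z^*}$ contains a norming subset $N$ (meaning $\|z\|=\sup_{\varphi\in N}|\varphi(z)|$ for all $z\in Z$) of cardinality at most $\kappa$, if and only if $\dens(B_{Z^*},w^*)\leq\kappa$. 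Indeed, a norming family $\{\varphi_\gamma\}_{\gamma<\kappa}$ yields the isometry $z\mapsto(\varphi_\gamma(z))_\gamma$, and conversely composing an isometric embedding with the coordinate functionals of $\ell_\infty(\kappa)$ produces such a family; a $w^*$-dense subset of $B_{Z^*}$ is automatically norming because $\varphi\mapsto\varphi(z)$ is $w^*$-continuous; and, using the bipolar theorem, the prepolar of a norming set $N$ is exactly $B_Z$, so the $w^*$-closed absolutely convex hull of $N$ is $B_{Z^*}$, whence the (at most $\kappa$ many) rational convex combinations of $N\cup-N$ are $w^*$-dense in $B_{Z^*}$. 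Applying this to $Z=\F(M)$, for which $Z^*=\Lip_0(M)$, gives \ref{it:Fisom}$\Leftrightarrow$\ref{it:ballSep} at once. The implication \ref{it:Fisom}$\Rightarrow$\ref{it:isom} is trivial: the canonical map $x\mapsto\delta_x$ is an isometric embedding of $M$ into $\F(M)$, which I compose with the linear isometric embedding of $\F(M)$ into $\ell_\infty(\kappa)$.

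It remains to prove \ref{it:isom}$\Rightarrow$\ref{it:ballSep}. The point to keep in mind is that one \emph{cannot} simply feed the coordinate functions of the embedding $M\hookrightarrow\ell_\infty(\kappa)$ into the general fact: these functions norm the differences $\delta_x-\delta_y$ but need not norm all of $\F(M)$ (already for the three-point equilateral space, the two coordinates realizing it in $\ell_\infty(2)$ fail to norm $\delta_x+\delta_y$). Instead I would argue as follows. An isometric embedding $\iota\colon M\hookrightarrow E:=\ell_\infty(\kappa)$, normalised so that the base point of $M$ goes to $0$, induces a linear map $\hat\iota\colon\F(M)\to\F(E)$ with $\hat\iota\,\delta_x=\delta_{\iota(x)}$. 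This $\hat\iota$ is \emph{isometric}: its adjoint $\hat\iota^*\colon\Lip_0(E)\to\Lip_0(M)$ is the pullback $f\mapsto f\circ\iota$, and by the McShane extension of real-valued Lipschitz functions every $f\in B_{\Lip_0(M)}$ extends to some $\tilde f\in B_{\Lip_0(E)}$ with $\hat\iota^*\tilde f=f$, so $\hat\iota^*$ maps $B_{\Lip_0(E)}$ \emph{onto} $B_{\Lip_0(M)}$, which forces $\hat\iota$ to be isometric.

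Now $\hat\iota^*$ is $w^*$-$w^*$-continuous, being the adjoint of an operator between the preduals $\F(M)$ and $\F(E)$, and it maps $B_{\Lip_0(E)}$ onto $B_{\Lip_0(M)}$; since a continuous surjection does not increase density, I obtain
\[
\dens\big(B_{\Lip_0(M)},w^*\big)\leq\dens\big(B_{\Lip_0(\ell_\infty(\kappa))},w^*\big).
\]
Thus it suffices to bound the right-hand side, i.e.\ to settle \ref{it:ballSep} for $M=\ell_\infty(\kappa)$ itself. Here I invoke Theorem~\ref{thm:ball} with $X=\ell_\infty(\kappa)$, giving $\dens(B_{\Lip_0(\ell_\infty(\kappa))},w^*)\leq\dens(B_{(\ell_\infty(\kappa))^*},w^*)$, and then the general fact once more: the coordinate functionals $(e_\alpha^*)_{\alpha<\kappa}$ form a norming subset of $B_{(\ell_\infty(\kappa))^*}$ of cardinality $\kappa$, since $\sup_\alpha|z_\alpha|=\|z\|_\infty$, so $\dens(B_{(\ell_\infty(\kappa))^*},w^*)\leq\kappa$. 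Chaining the three inequalities yields $\dens(B_{\Lip_0(M)},w^*)\leq\kappa$, which is \ref{it:ballSep}.

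I expect the genuine obstacle to be exactly this passage through $\F(\ell_\infty(\kappa))$ rather than $\ell_\infty(\kappa)$: because $\dens\ell_\infty(\kappa)=2^\kappa$, the crude estimate $\dens(B_{Z^*},w^*)\leq\dens Z$ is useless here, and it is only the special norming structure of $\ell_\infty(\kappa)$ (its coordinate functionals) together with Theorem~\ref{thm:ball} that brings the density back down to $\kappa$. The remaining points—the isometry of $\hat\iota$ via McShane extension and the non-increase of density under the $w^*$-continuous surjection $\hat\iota^*$—are routine once this architecture is in place.
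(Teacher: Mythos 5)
Your architecture is sound and, in outline, the same as the paper's: the equivalence \ref{it:Fisom}$\Leftrightarrow$\ref{it:ballSep} is exactly Fact~\ref{fact:denseInDualUnitBall}, the implication \ref{it:Fisom}$\Rightarrow$\ref{it:isom} is the same composition with $\delta$, and your route for \ref{it:isom}$\Rightarrow$\ref{it:ballSep} (embed $\F(M)$ isometrically into $\F(\ell_\infty(\kappa))$ via McShane extension, then bound the $w^*$-density of $B_{\Lip_0(\ell_\infty(\kappa))}$ by $\kappa$) is also the paper's route, up to the cosmetic difference that you transfer the density bound through the $w^*$-continuous ball surjection $\hat\iota^*$, while the paper instead chains two isometric embeddings $\F(M)\hookrightarrow\F(\ell_\infty(\kappa))\hookrightarrow\ell_\infty(\kappa)$ and quotes Fact~\ref{fact:denseInDualUnitBall} once more.

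There is, however, one genuine flaw: the crucial bound $\dens(B_{\Lip_0(\ell_\infty(\kappa))},w^*)\leq\kappa$ is obtained by invoking Theorem~\ref{thm:ball}, and in the paper Theorem~\ref{thm:ball} is \emph{deduced from} Proposition~\ref{prop:dualUnitBall}: its proof reads ``by Fact~\ref{fact:denseInDualUnitBall} there is a linear isometry $T:X\to\ell_\infty(\kappa)$, so by Proposition~\ref{prop:dualUnitBall} we obtain $\dens(B_{\Lip_0(X)},w^*)\leq\kappa$''. So, within the paper's deductive structure, your argument is circular, and the circle is not incidental: the whole nontrivial content of the implication \ref{it:isom}$\Rightarrow$\ref{it:ballSep} --- that \emph{Lipschitz} functions on $\ell_\infty(\kappa)$, not merely linear functionals, can be $w^*$-approximated from a family of size $\kappa$ built over the coordinate functionals --- is exactly what you are outsourcing, and your proof never supplies it. The correct ingredient, proved in the paper before and independently of the proposition, is Proposition~\ref{prop:LipDual}: $\dens(B_{\Lip_0(X^*)},w^*)\leq\dens X$ for every Banach space $X\neq\{0\}$, whose proof is the lattice Stone--Weierstrass argument of Lemma~\ref{lem:sepPoints} applied to a dense subset $D\subset B_X$ viewed as Lipschitz functions on $X^*$ (the lattice operations are needed precisely because of the obstruction you yourself point out with the three-point equilateral space). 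Applying it to $X=\ell_1(\kappa)$, whose dual is $\ell_\infty(\kappa)$ and whose density is $\kappa$, gives $\dens(B_{\Lip_0(\ell_\infty(\kappa))},w^*)\leq\kappa$; with your citation of Theorem~\ref{thm:ball} replaced by this one, your proof is correct and essentially coincides with the paper's.
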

	
	\begin{remark}
		Note that for $\kappa=\omega$ the equivalence of \ref{it:isom}-\ref{it:ballSep} follows, as observed in \cite[Proposition 2.9]{AGP24}, from \cite[Proposition 5.1]{Kal11} whose proof in turn is based on finding a countable norming set in $B_{\Lip_0(\ell_\infty)}$. Our argument is more general in the sense that we more-or-less explicitly find the norming set with similar properties for any dual space and not only for $\ell_\infty$. See the proof of Proposition~\ref{prop:LipDual} below.
	\end{remark}
	
	The remainder of this section is mostly devoted to proofs of Theorem~\ref{thm:tauPMain}, Theorem~\ref{thm:ball} and Proposition~\ref{prop:dualUnitBall}. Let us start with an essentially known fact proved e.g. in \cite{DS79} for $\kappa=\omega$.
	\begin{fact}\label{fact:denseInDualUnitBall}Let $X$ be a (non-separable) Banach space and $\kappa$ be a cardinal. Then the following conditions are equivalent.
		\begin{itemize}
			\item $w^*\text{-}\dens B_{X^*}\leq \kappa$
			\item There is a linear isometry $T:X\to \ell_\infty(\kappa)$.
		\end{itemize}
	\end{fact}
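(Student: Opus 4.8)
The plan is to prove the two implications separately; the forward implication is a direct construction, while the reverse one rests on the bipolar theorem together with a cardinality count, which is where the real work lies.

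Suppose first that $w^*\text{-}\dens B_{X^*}\le\kappa$ and fix a $w^*$-dense set $\{f_\alpha\setsep\alpha<\kappa\}\subseteq B_{X^*}$. I would define $T\colon X\to\Rea^\kappa$ by $Tx=(f_\alpha(x))_{\alpha<\kappa}$. Since each $f_\alpha\in B_{X^*}$ we have $|f_\alpha(x)|\le\|x\|$, so in fact $Tx\in\ell_\infty(\kappa)$ and $\|Tx\|_\infty\le\|x\|$. For the reverse inequality I would use that a $w^*$-dense subset of $B_{X^*}$ is automatically \emph{norming}: for fixed $x$ the evaluation $f\mapsto f(x)$ is $w^*$-continuous, so $\{f_\alpha(x)\setsep\alpha<\kappa\}$ is dense in $\{f(x)\setsep f\in B_{X^*}\}$, and by Hahn--Banach the supremum of the latter is exactly $\|x\|$. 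Hence $\|Tx\|_\infty=\sup_\alpha|f_\alpha(x)|=\|x\|$, so $T$ is a linear isometry.

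For the converse, assume $T\colon X\to\ell_\infty(\kappa)$ is a linear isometry. Let $e_\alpha^*$ be the $\alpha$-th coordinate functional on $\ell_\infty(\kappa)$ and put $f_\alpha:=T^*e_\alpha^*=e_\alpha^*\circ T$. Each $f_\alpha$ lies in $B_{X^*}$, and the family $N:=\{f_\alpha\setsep\alpha<\kappa\}$ is norming, since
\[
\sup_{\alpha<\kappa}|f_\alpha(x)|=\sup_{\alpha<\kappa}|e_\alpha^*(Tx)|=\|Tx\|_\infty=\|x\|,\qquad x\in X.
\]
Being norming means precisely that the polar $N^\circ:=\{x\in X\setsep |f(x)|\le1\ \forall f\in N\}$ equals $B_X$. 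The bipolar theorem then yields that the $w^*$-closed absolutely convex hull of $N$ (for real scalars, $\overline{\conv}^{\,w^*}(N\cup(-N))$) equals $N^{\circ\circ}=(B_X)^\circ=B_{X^*}$. It remains to produce a $w^*$-dense subset of $B_{X^*}$ of cardinality $\le\kappa$: the finite rational absolutely convex combinations of members of $N$ are $w^*$-dense in the $w^*$-closed absolutely convex hull of $N$, hence in $B_{X^*}$, and, as $\kappa$ is infinite, there are at most $\kappa\cdot\omega=\kappa$ of them. Therefore $w^*\text{-}\dens B_{X^*}\le\kappa$.

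The step I expect to be the main obstacle is this last passage, from ``$N$ is norming'' to ``$w^*\text{-}\dens B_{X^*}\le\kappa$''. It needs the identification $N^\circ=B_X$ followed by the bipolar theorem to recover all of $B_{X^*}$ as a $w^*$-closed absolutely convex hull, together with the bookkeeping that finitely supported rational combinations are simultaneously $w^*$-dense (by continuity of finite linear combinations into $(X^*,w^*)$ and density of the rationals) and of cardinality $\le\kappa$. The forward implication, by contrast, is essentially the observation that $w^*$-density forces the norming property, which is immediate from $w^*$-continuity of evaluations and Hahn--Banach.
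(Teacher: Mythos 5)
Your proposal is correct and follows essentially the same route as the paper: the forward direction is the identical coordinate-evaluation embedding (with $w^*$-density giving the norming property), and your reverse direction merely packages into the bipolar theorem the Hahn--Banach separation that the paper carries out by hand, followed by the same cardinality count of rational combinations. Incidentally, your use of the \emph{absolutely} convex hull $\overline{\conv}^{\,w^*}(N\cup(-N))$ is the precise form needed, and it quietly repairs a small slip in the paper, which takes $D=\overline{\conv}^{w^*}\{f_i\}$ without the $-f_i$ and then asserts $\sup\{d(x)\setsep d\in D\}\geq \|x\|$, an inequality that only follows after symmetrizing the generating set.
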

	\begin{proof}
		If $D\subset (B_{X^*},w^*)$ is dense, then the mapping $T:X\to \ell_\infty(D)$ given by $Tx:=(d(x))_{d\in D}$ is linear isometry.
		
		Conversely, given a linear isometry $T:X\to \ell_\infty(\kappa)$ we consider functionals $f_i\in B_{X^*}$, $i<\kappa$ given by $f_i(x):=T(x)(i)$. Put $D:=\overline{\conv}^{w^*} \{f_i\colon i<\kappa\}$. Since rational convex combinations are linearly dense in $D$, it suffices to prove that $D = B_{X^*}$. Suppose this is not the case, that is, there is $x^*\in B_{X^*}\setminus D$. Then by the Hahn-Banach theorem there is $x\in X$ such that $x^*(x) > \sup\{d(x)\colon d\in D\}$, but since $T$ is isometry we have $\sup\{d(x)\colon d\in D\}\geq \|x\|$ so we obtain $x^*(x) > \|x\|$, which is not possible.
	\end{proof}
	
	Our basic method of recognizing whether a metric embeds into $\ell_\infty(\kappa)$ is based on finding the minimal cardinality of the set of Lipschitz functions separating the points uniformly, this notion was used when studying the geometry of the dual unit ball of Lipschitz-free spaces quite many times, see e.g. \cite[Proposition 3.4]{Kal04}.
	
	\begin{defin}Let $M$ be a metric space. We say $S\subset \Lip_0(M)$ \emph{separates points uniformly (with constant $C\geq 1$)} if for every $x,y\in M$ and $\varepsilon>0$ there is $f\in S$ satisfying $\|f\|\leq C+\varepsilon$ and $|f(x)-f(y)| = d(x,y)$.
	\end{defin}
	
	The following proposition offers another perspective on the previous definition. Recall that a $C$-Lipschitz embedding is an injective Lipschitz mapping $f$ such that $\Lip(f) \cdot \Lip(f^{-1}|_{\mathrm{rng}\, f}) \leq C$.
	
	\begin{prop}\label{prop:EquivalenceEmbeddingSeparatesUniformly}Let $M$ be a metric space and $\kappa$ a cardinal. The following conditions are equivalent.
		\begin{itemize}
			\item There is a $C$-Lipschitz embedding of $M$ into $\ell_\infty(\kappa)$,
			\item There exists a subspace $S \subset \Lip_0(M)$ with density $\kappa$ that separates points uniformly with a constant $C$.
		\end{itemize}
	\end{prop}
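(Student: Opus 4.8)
The plan is to set up a direct dictionary between the coordinate functionals of a Lipschitz embedding and the elements of a uniformly separating family, normalising so that the upper Lipschitz constant equals $1$; the product condition $\Lip(\Phi)\cdot\Lip(\Phi^{-1})\le C$ then becomes the two-sided estimate $d(x,y)/C\le\|\Phi(x)-\Phi(y)\|_\infty\le d(x,y)$, which is what I would use on both sides.

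For the implication from an embedding to a separating family, suppose $\Phi\colon M\to\ell_\infty(\kappa)$ is a $C$-Lipschitz embedding. After rescaling $\Phi$ by $1/\Lip(\Phi)$ (which preserves the product $\Lip(\Phi)\Lip(\Phi^{-1})$) I may assume the estimate above holds. Fixing a base point $0\in M$, I define $f_i(m):=\Phi(m)(i)-\Phi(0)(i)$ for $i<\kappa$; each $f_i\in\Lip_0(M)$ with $\|f_i\|\le 1$, and I put $S:=\closedSpan\{f_i\colon i<\kappa\}$, which has density at most $\kappa$. To verify that $S$ separates points uniformly with constant $C$, I fix $x\ne y$ and $\varepsilon>0$. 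Since $\sup_i|f_i(x)-f_i(y)|=\|\Phi(x)-\Phi(y)\|_\infty\ge d(x,y)/C>d(x,y)/(C+\varepsilon)$, some index $i$ satisfies $a_i:=|f_i(x)-f_i(y)|>d(x,y)/(C+\varepsilon)$; the rescaled function $g:=\tfrac{d(x,y)}{a_i}f_i\in S$ then achieves $|g(x)-g(y)|=d(x,y)$ exactly while $\|g\|\le d(x,y)/a_i<C+\varepsilon$.

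For the converse, let $S\subset\Lip_0(M)$ separate points uniformly with constant $C$ and have density $\kappa$. I take $\{f_i\colon i<\kappa\}$ dense in the unit ball $B_S$ and set $\Phi(m):=(f_i(m))_{i<\kappa}$; since $f_i(0)=0$ and $\|f_i\|\le 1$ this lands in $\ell_\infty(\kappa)$, and $\|\Phi(x)-\Phi(y)\|_\infty\le d(x,y)$ gives $\Lip(\Phi)\le 1$. For the lower estimate I fix $x,y$ and $\varepsilon,\delta>0$, pick $f\in S$ with $\|f\|\le C+\varepsilon$ and $|f(x)-f(y)|=d(x,y)$, and approximate $g:=f/(C+\varepsilon)\in B_S$ by some $f_i$ with $\|f_i-g\|<\delta$; then $|f_i(x)-f_i(y)|\ge d(x,y)/(C+\varepsilon)-\delta\, d(x,y)$. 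Letting $\delta,\varepsilon\to 0$ yields $\|\Phi(x)-\Phi(y)\|_\infty\ge d(x,y)/C$, so $\Phi$ is injective with $\Lip(\Phi^{-1})\le C$, whence $\Lip(\Phi)\Lip(\Phi^{-1})\le C$.

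The routine points are the Lipschitz-norm bookkeeping and the fact that $\dens B_S=\dens S$ for infinite $\kappa$ (so the index sets have the right size). I expect the only genuinely delicate step to be reconciling the two slightly mismatched formulations of the constant: the definition of uniform separation only guarantees norm $\le C+\varepsilon$ together with an \emph{exact} value $d(x,y)$, whereas a $C$-Lipschitz embedding wants a clean factor $C$. This is what forces the single-coordinate rescaling trick in the first direction and the double limit in $\varepsilon$ and $\delta$ in the second; once that is handled, injectivity of $\Phi$ and the matching of the cardinals are immediate.
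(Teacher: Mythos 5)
Your proof is correct and follows essentially the same route as the paper: in one direction you take a dense family $\{f_i\}$ in $B_S$ and form the coordinate map $m\mapsto(f_i(m))_i$, with the same $\varepsilon$–$\delta$ approximation argument for the lower bound; in the other you use the coordinate functionals of the (rescaled) embedding and span them, which is exactly the paper's construction. The only difference is cosmetic: you spell out the single-coordinate rescaling that makes $|g(x)-g(y)|=d(x,y)$ exact, a step the paper dismisses as ``easy to check.''
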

	\begin{proof}
		If $S \subset \Lip_0(M)$ is a subspace that separates points uniformly with a constant $C \geq 1$, let $\{f_i : i < \kappa\}$ be a dense subset of $B_S$. Define $T: M \to \ell_\infty(\kappa)$ by the formula $T(x) = (f_i(x))_{i < \kappa}$. It is readily seen that $T$ is a $1$-Lipschitz function with $T(0) = 0$. Given $x, y \in M$ and $\epsilon > 0$, there exists $g \in B_S$ such that $|g(x) - g(y)| = \frac{1}{C + \epsilon} d(x, y)$. Fix $\delta > 0$ and let $i < \kappa$ be such that $\|g - f_i\| \leq \tfrac{\delta}{2}$.
		Hence 
		\begin{align*}
			\|T(x)-T(y)\|\geq |f_i(x)-f_i(y)|\geq |g(x)-g(y)|-\delta d(x,y) = \left(\frac{1}{C+\epsilon}-\delta\right)d(x, y).
		\end{align*}
		Since $\epsilon$ and $\delta$ are arbitrary, we deduce that $T$ is a $C$-Lipschitz embedding.
		
		On the other hand, if there exists an injective $1$-Lipschitz mapping $f: M \to \ell_\infty(\kappa)$ with $f(0) = 0$ and $\Lip(f^{-1}|_{f(M)})$ being $C$-Lipschitz, then it is easy to check that the subspace $S := \Span(\{e_i \circ f : i < \kappa\}) \subset \Lip_0(M)$, where $e_i$ are the coordinate functionals, separates points uniformly with constant $C$.
	\end{proof}
	
	The following analogy to the proof of the lattice version of Stone-Weierstrass theorem will be used multiple times in what follows.
	
	\begin{lemma}\label{lem:sepPoints}Let $M$ be a metric space and $D\subset \Lip_0(M)$ a subspace. Put $D':=D\cup\{\text{constant functions on $M$}\}\subset \Lip(M)$ and
		\[
		N(D):=\Big\{\bigvee_{i=1}^n\bigwedge_{j=1,\; i\neq j}^n f_{i,j}\colon \{f_{i,j}\colon i,j=1,\ldots,n,\; i\neq j\}\subset D',\; n\in\Nat\Big\}.
		\]
		Then the following holds for $E(D):=\Span (N(D)\cap \Lip_0(M))$.
		\begin{itemize}
			\item If $D$ separates the points of $M$, then $E(D)$ is dense in $(\Lip_0(M),\tau_p)$.
			\item If $D$ separates the points of $M$ uniformly with constant $C$, then $B_{\Lip_0(M)}\subset \overline{CB_{\Lip_0(M)}\cap E(D)}^{\tau_p}$.
		\end{itemize}
	\end{lemma}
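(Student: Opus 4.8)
The statement is a lattice Stone--Weierstrass theorem tailored to $\Lip_0(M)$, so the plan is to run the classical ``meet--then--join'' sandwich, but evaluated only on a finite set (all that $\tau_p$ sees) and with the Lipschitz constant tracked throughout. Since a basic $\tau_p$-neighbourhood of a function constrains its values only on a finite set $F\subset M$, I would fix such an $F$, assume the base point $0\in F$, and record that every $v\in\mathbb{R}^F$ with $v(0)=0$ is the restriction of some member of $\Lip_0(M)$ (McShane extension on a finite metric space). Thus in both bullets the task reduces to producing, for a prescribed $g$ and $\varepsilon>0$, an element of $E(D)$ whose values on $F$ are within $\varepsilon$ of $g$ on $F$ --- and, for the second bullet, one which in addition has Lipschitz constant close to $C$.

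The essential building blocks are \emph{base-point-anchored interpolants}: for $x\in F$, separation of points (resp.\ uniform separation with constant $C$) supplies $f\in D$ with $f(x)=g(x)$ and $f(0)=0$, and in the uniform case with $\Lip(f)\le C+\varepsilon$, since $|g(x)|=|g(x)-g(0)|\le d(x,0)$ forces the scaling factor to have modulus $\le 1$. These already lie in $D'$ precisely because membership in $D$ encodes the correct value $0$ at the base point, so no vertical translation is needed to obtain them. I would then assemble the anchored interpolants into a single element of $N(D)$ by taking finite joins (to raise the value to at least $g-\varepsilon$ at each point of $F$) and then pulling overshoots back below $g+\varepsilon$ at each point of $F$ by meets against suitable combinations of anchored interpolants and constant functions; the lattice distributive law rewrites the outcome in the normal form $\bigvee_{i}\bigwedge_{j\neq i}f_{i,j}$, and $n$ is enlarged to supply the extra meet/join slots, filled with dominated constants. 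Since finite meets and joins do not increase the Lipschitz constant, in the uniform case the resulting $\phi\in N(D)$ inherits $\Lip(\phi)\le C+\varepsilon$; choosing the signs of the constant caps so that the base-point value stays $0$ keeps $\phi\in N(D)\cap\Lip_0(M)\subset E(D)$.

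For the first bullet this already yields $\tau_p$-density of $E(D)$, as no control on $\Lip(\phi)$ is required. For the second bullet I would finish by a rescaling: replacing $\phi$ by $\tfrac{C}{C+\varepsilon}\phi\in CB_{\Lip_0(M)}\cap E(D)$ changes each value on the finite set $F$ by an amount that tends to $0$ as $\varepsilon\to 0$, so letting $\varepsilon\to 0$ places every $g\in B_{\Lip_0(M)}$ in $\overline{CB_{\Lip_0(M)}\cap E(D)}^{\tau_p}$, as claimed.

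The step I expect to be the main obstacle is exactly the assembly described above: because $D'=D\cup\{\text{constant functions}\}$ is only a union and not a linear subspace, the vertical translations that make ordinary lattice Stone--Weierstrass interpolation work are unavailable, and all level adjustments must instead be routed through the anchored interpolants and constant caps inside the meets and joins. Arranging these so that the values at every point of $F$ are simultaneously trapped in $[g-\varepsilon,g+\varepsilon]$, while preserving both the Lipschitz bound $C+\varepsilon$ and the base-point value $0$, is the delicate heart of the argument; by contrast, the surplus $\varepsilon$ in the Lipschitz bound is harmless and is absorbed by the final rescaling.
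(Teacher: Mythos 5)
Your finite-set reduction, the rescaling endgame, and the remark that finite lattice operations do not increase Lipschitz constants all agree with the paper's proof; but the step you defer as ``the delicate heart'' --- assembling your blocks into a single element of $N(D)$ whose values on $F$ are trapped in $[g-\varepsilon,g+\varepsilon]$ --- \emph{is} the proof, and with the blocks you chose it is not merely missing but impossible. The lattice argument needs, for every \emph{pair} of points of $F$, a generator matching $g$ at \emph{both} points: only then is each meet $\bigwedge_{y\neq x}g_{x,y}$ below $g$ everywhere on $F$ yet equal to $g(x)$ at $x$, so that the join interpolates. Your anchored interpolants control one point each, and constants take the same value everywhere; this is genuinely too weak. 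Concretely, let $M=F=\{0,y_0,x\}$ with all mutual distances $1$, let $D=\Span\{u\}$ where $u(0)=0$, $u(y_0)=2$, $u(x)=1$ (this $D$ separates points uniformly with constant $C=2$), and let $g(0)=g(y_0)=0$, $g(x)=1$, so $g\in B_{\Lip_0(M)}$. Your available generators are the multiples $tu$ (in particular both anchored interpolants, namely $0$ and $u$ itself) and the constant functions, and every generator whose value at $x$ is at least $1-\varepsilon$ also has value at least $1-\varepsilon$ at $y_0$ (for $tu$ because $2t\geq t\geq 1-\varepsilon>0$; for constants trivially). Hence every meet term whose value at $x$ is at least $g(x)-\varepsilon$ has value at least $1-\varepsilon$ at $y_0$; any join that comes within $\varepsilon$ of $g$ at $x$ must contain such a term and therefore exceeds $1-\varepsilon$ at $y_0$, far above $g(y_0)+\varepsilon$. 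So for $\varepsilon<1/2$ no element of the lattice generated by your blocks is $\varepsilon$-close to $g$ on $F$: your ``joins, then caps'' scheme has no valid instantiation. Passing to the span $E(D)$ cannot rescue the second bullet either, since linear combinations are exactly what loses the norm bound $C$ that bullet is about.

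The paper resolves the very point you worried about, but in the opposite direction. For each pair $x,y\in A$ it scales a separating function inside $D$ so that the increment matches $f(x)-f(y)$, and then \emph{adds a constant} to pin the values at both points; these two-point interpolants $g_{x,y}$ (in the example above, $g_{x,y_0}=-u+2$, which your generating family provably cannot produce) are fed into $\bigvee_{x\in A}\bigwedge_{y\in A,\,y\neq x}g_{x,y}$, giving \emph{exact} interpolation $g|_A=f|_A$ with $\|g\|_{\Lip}\leq (C+\varepsilon)\|f\|_{\Lip}$, followed by the same rescaling you propose. You read $D'=D\cup\{\text{constant functions}\}$ literally as a union and concluded that translating elements of $D$ by constants is forbidden; the displayed definition is indeed loosely worded, but the paper's own proof explicitly places $f'_{x,y}+\mathrm{const}$ in $D'$, so the intended object is $D+\{\text{constants}\}$ (equivalently $\Span(D\cup\{\text{constants}\})$, as $D$ is a subspace), under which the proof of the lemma is correct. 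The right reaction to the infelicity you spotted is to fix the definition and use the translated two-point interpolants --- not to outlaw translation, after which the interpolation your plan requires simply does not exist.
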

	\begin{proof}Pick $f\in \Lip_0(M)$ and its basic $\tau_p$-open neighborhood given by a finite set $A\subset M$ and $\varepsilon>0$, where we may without loss of generality assume that $0\in A$. We \emph{claim} there exists $g\in E(D)$ (with $\|g\|_{\Lip}\leq (C+\varepsilon)\|f\|_{\Lip}$) such that $g|_A = f|_A$.
		
		Indeed, since $D$ separates the points (uniformly with constant $C\geq 1$), for any $x,y\in M$ there exists $f_{x,y}\in D$ such that $|f_{x,y}(x)-f_{x,y}(y)| = d(x,y)$ (and $\|f_{x,y}\|_{\Lip}\leq C+\varepsilon$). Multiplying $f_{x,y}$ by $\pm \tfrac{|f(x)-f(y)|}{d(x,y)}$ we obtain $f'_{x,y}\in D$ satisfying $f'_{x,y}(x)-f'_{x,y}(y) = f(x)-f(y)$ and adding a constant $f(y) - f'_{x,y}(y)$ we obtain $g_{x,y}\in D'$ such that $g_{x,y}(x)=f(x)$ and $g_{x,y}(y) = f(y)$ (and $\|g_{x,y}\|_{\Lip}\leq (C+\varepsilon)\|f\|_L$). Finally, we consider the function
		\[
		g:=\bigvee_{x\in A}\bigwedge_{y\in A,x\neq y} g_{x,y}.
		\]
		Then it is easy to check that $g|_A = f|_A$, $g\in E(D)$ (and $\|g\|_{\Lip}\leq (C+\varepsilon)\|f\|_L$). This finishes the proof of the claim above, and completes the proof that $E(D)$ is dense in $(\Lip_0(M),\tau_p)$.
		
		For the second part, we observe that for the function $h:=g\tfrac{C}{C+\varepsilon}$ we have that $\|h\|\leq C$ and $\|h|_A-f|_A\|_\infty \leq \|f|_A\|_\infty\cdot |\tfrac{C}{C+\varepsilon}-1|\to 0$, $\varepsilon\to 0$ and therefore, any $\tau_p$-neighborhood of the function $f$ intersects $CB_{\Lip_0(M)}\cap E(D)$ which implies that $f\in \overline{CB_{\Lip_0(M)}\cap E(D)}^{\tau_p}$.
	\end{proof}
	
	First consequence of Lemma~\ref{lem:sepPoints} is the following, which is the crucial step towards the proof of Theorem~\ref{thm:ball}.
	
	\begin{prop}\label{prop:LipDual}Let $X$ be a Banach space, $X\neq \{0\}$. Then 
		\[\dens \big(B_{\Lip_0(X^{*})},w^*\big)\leq \dens X.\] 
	\end{prop}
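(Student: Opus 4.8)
The plan is to feed a concrete, small-density subspace of $\Lip_0(X^*)$ into Lemma~\ref{lem:sepPoints} and then to read off a $\tau_p$-dense subset of the unit ball of the same cardinality. Set $\kappa:=\dens X$, and recall that on the ($w^*$-compact) ball $B_{\Lip_0(X^*)}$ the $w^*$-topology coincides with $\tau_p$; hence it is enough to show $\dens\big(B_{\Lip_0(X^*)},\tau_p\big)\le\kappa$. First I would introduce, for each $x\in X$, the evaluation $\widehat x\in\Lip_0(X^*)$ given by $\widehat x(x^*)=x^*(x)$. Since $\widehat x$ is linear with $\|\widehat x\|_{\Lip}=\sup_{z^*\neq 0}\tfrac{|z^*(x)|}{\|z^*\|}=\|x\|$ (the canonical embedding $X\hookrightarrow X^{**}$ being isometric), the map $x\mapsto\widehat x$ is a linear isometry, so $D:=\{\widehat x\colon x\in X\}$ is a subspace of $\Lip_0(X^*)$ with $\dens D=\kappa$. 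Moreover $D$ separates the points of $X^*$ uniformly with constant $1$: given $x^*\neq y^*$ with $z^*:=x^*-y^*$ and $\varepsilon>0$, pick $x\in X$, $\|x\|\le 1$, with $z^*(x)>\tfrac{1}{1+\varepsilon}\|z^*\|$ and rescale to $x':=\tfrac{\|z^*\|}{z^*(x)}x$; then $\|\widehat{x'}\|_{\Lip}=\|x'\|\le 1+\varepsilon$ and $\widehat{x'}(x^*)-\widehat{x'}(y^*)=z^*(x')=\|z^*\|=d(x^*,y^*)$ (the case $x^*=y^*$ being trivial). Thus Lemma~\ref{lem:sepPoints} applies with $C=1$.

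Next I would fix a dense set $\{x_\alpha\colon\alpha<\kappa\}$ of $X$, put $D^{\Rat}:=\{\widehat{x_\alpha}\colon\alpha<\kappa\}\cup\{q\mathbf 1\colon q\in\Rat\}$, and let $N^{\Rat}$ denote the family of lattice combinations $\bigvee_i\bigwedge_{j\neq i} g_{i,j}$ with $g_{i,j}\in D^{\Rat}$. Then $|N^{\Rat}|\le\kappa$, and since subtracting a rational constant commutes with the lattice operations and the value of such a combination at $0$ is a max--min of rationals, $N^{\Rat}\cap\Lip_0(X^*)$ is a set of cardinality at most $\kappa$ (it contains $\theta-\theta(0)$ for every $\theta\in N^{\Rat}$). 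The candidate dense set is
\[
\Delta:=\big\{\,r\theta\colon r\in\Rat\cap(0,1],\ \theta\in N^{\Rat}\cap\Lip_0(X^*)\,\big\}\cap B_{\Lip_0(X^*)},
\]
which has cardinality at most $\kappa$ and lies inside $B_{\Lip_0(X^*)}$ by construction.

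To check that $\Delta$ is $\tau_p$-dense in $B_{\Lip_0(X^*)}$, I would fix $f\in B_{\Lip_0(X^*)}$ and a basic neighborhood given by a finite set $A\ni 0$ and $\varepsilon>0$. The construction in the proof of Lemma~\ref{lem:sepPoints}, applied to $D$, yields $g=\bigvee_{x\in A}\bigwedge_{y\in A,\,y\neq x} g_{x,y}\in N(D)\cap\Lip_0(X^*)$ with $g|_A=f|_A$, where each building block has the form $g_{x,y}=\widehat{z_{x,y}}+c_{x,y}$ with $\|z_{x,y}\|\le 1+\varepsilon$. Replacing each $z_{x,y}$ by a sufficiently close $x_\alpha$ and each $c_{x,y}$ by a close rational, and using that $\vee$ and $\wedge$ are $1$-Lipschitz in each coordinate \emph{pointwise}, produces $\widetilde g\in N^{\Rat}\cap\Lip_0(X^*)$ (after subtracting the rational constant $\widetilde g(0)$, which only shifts values on $A$ by $\approx f(0)=0$) with $\widetilde g|_A$ as close to $f|_A$ as desired and, crucially, $\Lip(\widetilde g)\le\max_{x,y}\|x_\alpha\|\le 1+\varepsilon'$.

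The delicate point, and the step I expect to be the main obstacle, is that $\widetilde g$ need not lie in the unit ball: the lattice operations are $\tau_p$-continuous but \emph{not} continuous for the Lipschitz norm (for instance $t^+$ and $(t+\delta)^+$ differ by $\delta$ in sup norm yet by $1$ in Lipschitz seminorm), so perturbing the building blocks cannot be expected to preserve $\|\cdot\|_{\Lip}\le 1$. This is exactly what the explicit estimate $\Lip(\widetilde g)\le 1+\varepsilon'$ is for: choosing a rational $r$ with $\tfrac{1}{1+2\varepsilon'}\le r\le\tfrac{1}{1+\varepsilon'}$ forces $r\widetilde g\in B_{\Lip_0(X^*)}$, hence $r\widetilde g\in\Delta$, while $r\to 1$ keeps $r\widetilde g|_A$ close to $f|_A$ (the additional error being at most $|r-1|\cdot\max_{a\in A}\|a\|$). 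Therefore $r\widetilde g$ witnesses that $\Delta$ meets the chosen neighborhood of $f$, so $\Delta$ is $\tau_p$-dense and $\dens\big(B_{\Lip_0(X^*)},\tau_p\big)\le\kappa$. Since $w^*$ and $\tau_p$ agree on $B_{\Lip_0(X^*)}$, this gives $\dens\big(B_{\Lip_0(X^*)},w^*\big)\le\dens X$, as claimed.
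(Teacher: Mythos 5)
Your proof is correct and follows essentially the same route as the paper: identify $X$ with the evaluation functionals inside $\Lip_0(X^*)$, check that they separate the points of $X^*$ uniformly with constant $1$, and then combine Lemma~\ref{lem:sepPoints} with a rational/dense-set reduction to produce a $w^*$-dense subset of $B_{\Lip_0(X^*)}$ of cardinality $\dens X$. The only difference is expository: the paper compresses your perturbation-and-rescaling argument into the single inclusion $\overline{\Span N(\Span D)\cap B_{\Lip_0(X^*)}}^{w^*}\subseteq\overline{\Span N(\Span_{\Rat} D)\cap B_{\Lip_0(X^*)}}^{w^*}$ (for $D$ a dense subset of $B_X$), whereas you verify that step explicitly, including the point that pointwise-small perturbations of the lattice combinations may leave the unit ball and must be pulled back in by a rational rescaling.
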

	\begin{proof}Let $D\subset B_X$ be dense set. Given $x^*,y^*\in X^*$ and rational $\delta>0$ there is $x\in D$ with $|(x^*-y^*)(x)|\geq (1-\delta)\|x^*-y^*\|$. But then for $r=\frac{\|x^*-y^*\|}{|(x^*-y^*)(x)|}$ we obtain that $f=rx\in \Lip_0(X^*)$ satisfies that $|f(x^*)-f(y^*)|=\|x^*-y^*\|$ and $\|f\|\leq \tfrac{1}{1-\delta}$. Thus, $\Span D\subset \Lip_0(X^*)$ separates points uniformly with constant $1$ and by Lemma~\ref{lem:sepPoints},
		\[
		B_{\Lip_0(X^*)}\subset \overline{\Span N(\Span D)\cap B_{\Lip_0(X^*)}}^{w^*}\subset \overline{\Span N(\Span_\Rat D)\cap B_{\Lip_0(X^*)}}^{w^*}.
		\]
		Since $|N(\Span_\Rat D)| = |D|$, this finishes the proof.
	\end{proof}
	
	Now, we are ready for the proofs of Proposition~\ref{prop:dualUnitBall} and Theorem~\ref{thm:ball}.
	
	\begin{proof}[Proof of Proposition~\ref{prop:dualUnitBall}]Equivalence of \ref{it:Fisom} and \ref{it:ballSep} is Fact~\ref{fact:denseInDualUnitBall}. If \ref{it:isom} holds, then $\F(M)$ isometrically embeds into $\F(\ell_\infty(\kappa))$ which in turn isometrically embeds into $\ell_\infty(\kappa)$ due to Fact~\ref{fact:denseInDualUnitBall} and Proposition~\ref{prop:LipDual} applied to $X = \ell_1(\kappa)$. Finally, if \ref{it:Fisom} holds pick linear isometry $T:\F(M)\to\ell_\infty(\kappa)$ and observe that then $T\circ \delta:M\to \ell_\infty(\kappa)$ is isometric embedding so \ref{it:isom} holds.
	\end{proof}
	
	\begin{proof}[Proof of Theorem~\ref{thm:ball}]
		Let $\kappa = \dens \big(B_{X^*},w^*\big)$. By Fact~\ref{fact:denseInDualUnitBall} there is a linear isometry $T:X\to \ell_\infty(\kappa)$ so by Proposition~\ref{prop:dualUnitBall} we obtain that $\dens \big(B_{\Lip_0(X)},w^*\big)\leq \kappa$.
		
		The ``In particular'' part follows from the well-known fact that in general we have $\dens \big(B_{X^*},w^*\big)\leq \dens X$.
	\end{proof}
	
	Finally, let us present the proof of Theorem~\ref{thm:tauPMain}.
	
	\begin{proof}[Proof of Theorem~\ref{thm:tauPMain}]
		By Lemma~\ref{lem:sepPoints}, it suffices to prove that there are countably many functions in $\Lip_0(M)$ separating the points of $M$.
		
		Using paracompactness of $M$ (see \cite[Theorem 5.1.3]{Eng}) we find for each $n$ an open cover $\A_n$ of $M$ which is locally finite and each $A\in\A_n$ is contained in an open ball of diameter at most $\tfrac{1}{n}$. Since there are at most $2^\omega$ points in $M$, we may without loss of generality assume that $|\A_n|\leq 2^\omega$, $n\in\Nat$ and denote its elements as $\A_n = \{U_x^n\colon x\in 2^\omega\}$.
		
		Pick a basis $\mathcal{B}$ of clopen sets in the Cantor set $2^\omega$ and put 
		\[
		\mathcal{N}:=\Big\{\bigcup_{i=1}^n B_i\colon B_1,\ldots,B_n\text{ are sets from }\mathcal{B}\Big\}.
		\]
		Then $\mathcal{N}$ is a countable family of sets satisfying that for any two disjoint finite sets $S,S'\subset 2^\omega$ we find $N,N'\in\mathcal{N}$ with $S\subset N\setminus N'$ and $S'\subset N'\setminus N$. Finally, for each $N\in\mathcal{N}$ we put 
		\[
		F_{n,N}:=\overline{\bigcup\{U_x^n\colon x\in N\}}.
		\]
		and consider the countable family of $1$-Lipschitz functions
		\[\mathcal{R}:=\{f_{n,N},\colon n\in\Nat, N\in\mathcal{N}\},\]
		where each $f_{n,N}$ is defined as $f_{n,N}(a):=d(a,F_{n,N}) - d(0,F_{n,N})$. We shall verify this family separates the points of $M$.
		
		Pick two distinct points $a,b\in M$ and find $n\in\Nat$ with $B(a,\tfrac{2}{n})\cap B(b,\tfrac{2}{n}) = \emptyset$. Since $\A_n$ is locally finite covering, there is $m\in\Nat$, $m\geq n$ such that there are only finitely many sets from $\A_n$ intersecting $B(a,\tfrac{1}{m})$ and finitely many sets from $\A_n$ intersecting $B(b,\tfrac{1}{m})$. Thus, there are finite sets $S,S'\subset 2^\omega$ such that $\{U_{x}^n\colon x\in S\}$ and $\{U_{x}^n\colon x\in S'\}$ are the sets from $\A_n$ intersecting $B(a,\tfrac{1}{m})$ and $B(b,\tfrac{1}{m})$, respectively. Since each of the sets $U_x^n$, $x\in 2^\omega$ has diameter less or equal to $\tfrac{1}{n}$, we have that $\bigcup_{x\in S}U_{x}^n\subset B(a,\tfrac{2}{n})$ and $\bigcup_{x\in S'}U_{x}^n\subset B(b,\tfrac{2}{n})$, so $S\cap S' = \emptyset$. By the choice of $\mathcal{N}$ we pick $N,N'\in\mathcal{N}$ with $S\subset N\setminus N'$ and $S'\subset N'\setminus N$. Then $a\in F_{n,N}$ and $b\in F_{n,N'}$. On the other hand, since $B(a,\tfrac{1}{m})\cap U_x^n = \emptyset$ for $x\in 2^\omega\setminus S$, we have $B(a,\tfrac{1}{m})\cap \big(\bigcup\{U_x^n\colon x\in N'\}\big) = \emptyset$ and therefore $d(a,F_{n,N'})\geq \tfrac{1}{m} >0$. Thus, we have $f_{n,N'}(a) > - d(0,F_{n,N'}) = f_{n,N'}(b)$. Since $a,b\in M$ were arbitrary, this proves that $\mathcal{R}$ separates the points of $M$.
	\end{proof}
	
	\iffalse
	\textcolor{magenta}{We conclude the section with some straightforward applications of the results that are worth mentioning.}
	
	\textcolor{magenta}{\begin{examples}\label{ex:ball}
			\begin{itemize}\text{ }
				\item By Corollary \ref{cor:MbasisCase}, $F(C([0, \omega_1]))$ has a $w^*$-separable dual because $C([0, \omega_1])$ admits a projectional skeleton. However, its dual unit ball is not $w^*$-separable, as shown by Proposition \ref{prop:dualUnitBall} and \cite[Theorem 4.2]{Kal11}.
				\item Since $\ell_\infty/c_0$ contains all Banach spaces of density $\omega_1$ isometrically, the dual unit ball of $F(\ell_\infty/c_0)$ is not $w^*$-separable. However, it remains unknown whether $F(\ell_\infty/c_0)$ itself has a $w^*$-separable dual. We discuss this problem in more detail in Section \ref{sec-Conclusion}.
				\item Consider the metric space $M = \{0\} \cup \{x_A : A \subset \Nat\}$ with distance defined by $ d(x_A, x_B) = 2$ for $A \neq B$ and $d(x_A, 0) = 1$. The mapping $f: M \to \ell_\infty$ given by $f(0) = 0 $ and $f(x_A) = \chi_A - \chi_{A^c}$ is an isometry. By Proposition \ref{prop:dualUnitBall} $\F(M)$ has a $w^*$-separable dual unit ball. Moreover, since $\F(M) \equiv \ell_1(2^\omega)$ isometrically, this proves that $B_{\ell_\infty(2^\omega)}$ is $w^*$-separable. Although this result is known, we provide a new proof here.
			\end{itemize}
	\end{examples}}
	\fi

	\section{Locally separable complete metric spaces}\label{sec:locSep}
	
	In this section, let $M$ be a metric space with $\dens M\leq 2^\omega$. Recall that, using the Hahn-Banach theorem, $w^*$-separability of $\Lip_0(M)$ is equivalent to the existence of countably many functions separating the points of $\F(M)$. By Theorem~\ref{thm:tauPMain}, there are countably many Lipschitz functions separating the points of $M$. Using \cite[Proposition 2.11]{AGP24}, this implies that then there are countable many functions in $\Lip_0(M)$ separating the points of the dense subspace $\Span \delta(M)\subset \F(M)$. However, it is not clear to the authors whether those functions separate also all the points in $\F(M)$. Thus, in order to obtain $w^*$-separability, it seems something more is needed. It was observed in \cite[text below the proof of Proposition 2.10]{AGP24} that supposing $M$ is uniformly discrete, we obtain $w^*$-separability of $\Lip_0(M)$. Trying to analyze the argument, we arrive at the following sufficient condition, which is inspired by the proof of \cite[Proposition 3.5]{GPP23}.
	
	\begin{prop}\label{prop:sufficient}
		Let $M$ be a complete metric space with $\dens M\leq 2^\omega$. Suppose there exists a countable sequence of 1-Lipschitz functions $(f_n)_{n\in\Nat}$ such that
		\begin{enumerate}[label=(C\alph*)]        
			\item\label{cond:countable} for every $x\in M$ there exists $\varepsilon_0>0$ such that for every $\varepsilon\in (0,\varepsilon_0)$ and $g\in \Lip_0(M)$ with $\suppt g\subset B(x,\varepsilon)$ there exists $K>0$ satisfying
			\[
			|g(y)-g(z)|\leq K\Big(\sup_{n\in\Nat} |f_n(y)-f_n(z)|\Big),\qquad y,z\in M.
			\]
		\end{enumerate}
		Then $(\Lip_0(M),w^*)$ is separable.
	\end{prop}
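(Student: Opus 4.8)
The plan is to produce a single countable set $D\subset\Lip_0(M)$ separating the points of $\F(M)$; by the Hahn--Banach reduction recalled at the start of the section this is exactly $w^*$-separability. First I would normalize the hypothesis: replacing each $f_n$ by $f_n-f_n(0)$ and adjoining to the sequence a countable family separating the points of $M$ (which exists by Theorem~\ref{thm:tauPMain}, and whose addition only enlarges $\sup_n|f_n(y)-f_n(z)|$ and hence preserves \ref{cond:countable}), I may assume the $f_n$ lie in $\Lip_0(M)$ and separate points. Then $\Phi:=(f_n)_n$ is a $1$-Lipschitz injection $\Phi:M\to\ell_\infty$ with $\Phi(0)=0$, and I write $\rho(y,z)=\|\Phi(y)-\Phi(z)\|_\infty=\sup_n|f_n(y)-f_n(z)|$. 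Call $g\in\Lip_0(M)$ \emph{admissible} if it is supported in a ball $B(x,\varepsilon)$ with $\varepsilon$ below the threshold $\varepsilon_0$ furnished by \ref{cond:countable} at $x$.

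The decisive use of \ref{cond:countable} is a factorization through $\Phi$. For an admissible $g$ the inequality in \ref{cond:countable} holds for \emph{all} $y,z\in M$, so $g\circ\Phi^{-1}$ is a well-defined $K$-Lipschitz function on $\Phi(M)$; extending it by the McShane formula yields $h\in\Lip_0(\ell_\infty)$ with $h(0)=g(0)=0$ and $g=h\circ\Phi$ everywhere on $M$. Now $B_{\Lip_0(\ell_\infty)}$ is $w^*$-separable: since $\ell_\infty$ embeds isometrically into $\ell_\infty(\omega)$, Fact~\ref{fact:denseInDualUnitBall} gives $w^*\text{-}\dens B_{(\ell_\infty)^*}=\omega$, whence $\dens(B_{\Lip_0(\ell_\infty)},w^*)=\omega$ by Theorem~\ref{thm:ball}. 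Fixing a countable $w^*$-dense set $\{h_m\}\subset\Lip_0(\ell_\infty)$, I put $D:=\{h_m\circ\Phi:m\in\Nat\}$. Because the linearization $\hat\Phi:\F(M)\to\F(\ell_\infty)$ is bounded and $h\mapsto h\circ\Phi$ is its adjoint, the composition operator is $w^*$-$w^*$ continuous; hence every admissible $g=h\circ\Phi$ lies in $\overline{\Span D}^{\,w^*}$.

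It then remains to prove that the admissible functions already separate $\F(M)$; combined with the previous paragraph this forces $\overline{\Span D}^{\,w^*}=\Lip_0(M)$ and finishes the proof. Here I would first reduce to a separable support: every $\mu\in\F(M)$ is a norm-limit of finitely supported elements, hence lies in $\F(S)$ for a separable closed $S\ni 0$, and, via McShane extensions multiplied by cut-offs, the hypothesis that $\mu$ annihilates every admissible function transfers to $\Lip_0(S)$. On the separable space $S$ I would fix a countable, locally finite Lipschitz partition of unity $\{\phi_i\}$ subordinate to a cover by admissible balls, arranged so that the distinguished piece is centered at the base point $0$. Each $\phi_iG$ is then an admissible element of $\Lip_0(S)$ and the partial sums $\sum_{i\le N}\phi_iG$ converge pointwise to $G$; if this can be upgraded to $\langle\mu,\sum_{i\le N}\phi_iG\rangle\to\langle\mu,G\rangle$, then $\langle\mu,G\rangle=0$ for every $G\in\Lip_0(S)$, so $\mu=0$.

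The main obstacle is precisely this last upgrade. Pointwise convergence of $\sum_{i\le N}\phi_iG$ is automatic, but pairing it against the fixed $\mu$ requires uniform control of the Lipschitz norms of the partial sums (or some other $w^*$-convergence mechanism), and for an unbounded $G$ on a possibly unbounded $S$ this is delicate: a careless partition of unity makes $\|\sum_{i\le N}\phi_iG\|_{\Lip}$ blow up. I expect the construction to succeed only with a partition adapted to the scale and to the distance from the base point, and this is presumably also the step where completeness of $M$ is genuinely used — to guarantee that the local data assembled by $\Phi$ and the partition see all of $\F(M)$, with no element of the free space escaping to a ``hole'' of the space.
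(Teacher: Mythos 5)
Your first two paragraphs are essentially correct and, in fact, reproduce the paper's own strategy in different clothing: the paper forms the map $f=(f_n)_n:M\to\ell_\infty$, linearizes it to a bounded operator $T:\F(M)\to\F(\overline{\rng f})\subset\F(\ell_\infty)$, and reduces everything to showing $T$ is injective, using that $\F(\ell_\infty)$ has $w^*$-separable dual (Proposition~\ref{prop:dualUnitBall}); your factorization $g=h\circ\Phi$ of an admissible $g$ via a McShane extension is exactly the paper's observation that \ref{cond:countable} makes $\varphi:=g\circ f^{-1}$ a well-defined Lipschitz function on $\rng f$. (A minor remark: adjoining a point-separating family to make $\Phi$ injective is harmless but unnecessary — \ref{cond:countable} alone guarantees $g\circ f^{-1}$ is well defined even when $f$ is not injective.)

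The genuine gap is the step you flag yourself: the claim that admissible functions separate the points of $\F(M)$. This is not a technicality that a partition of unity will finesse; it is the crux of the proposition and precisely the place where completeness is used. The paper settles it by invoking the support theory of \cite{APPP20}, packaged as Lemma~\ref{lem:notZero}: for complete $M$, every nonzero $\mu\in\F(M)$ admits a point $x$ (any point of $\suppt\mu$, nonempty by the Intersection Theorem) such that for \emph{every} $\varepsilon>0$ there is $g$ with $\suppt g\subset B(x,\varepsilon)$ and $g(\mu)\neq 0$; taking $\varepsilon<\varepsilon_0(x)$ produces an admissible $g$ detecting $\mu$. Your proposed substitute — partial sums $\sum_{i\le N}\phi_i G$ over a partition of unity subordinate to admissible balls — fails for exactly the reason you name: the thresholds $\varepsilon_0(x_i)$ have no uniform lower bound and $G$ may be unbounded, so the Lipschitz norms of the partial sums need not stay bounded, and without a uniform bound, pointwise convergence gives no convergence against the fixed $\mu$. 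No repair is sketched, and the statement you would need is essentially the local-detection theorem of \cite{APPP20} itself, which is a nontrivial result even for separable complete spaces and is false without completeness (the support of $\mu$ may live in the completion, outside $M$, in which case no point of $M$ works). So the proposal correctly reduces Proposition~\ref{prop:sufficient} to this lemma but does not prove it; as a blind proof it is incomplete at its decisive step.
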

	
	The main result of this section is that it is possible to satisfy this condition in locally separable complete metric spaces.
	
	\begin{thm}\label{thm:locSep}
		Let $M$ be a complete locally separable metric space with $\dens M\leq 2^\omega$. Then $(\Lip_0(M),w^*)$ is separable.
	\end{thm}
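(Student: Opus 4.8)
The plan is to verify the hypotheses of Proposition~\ref{prop:sufficient}: I will exhibit a countable family of $1$-Lipschitz functions satisfying condition~\ref{cond:countable}. The first step is a structural reduction. Using local separability together with Stone's theorem (take a $\sigma$-discrete open refinement $\mathcal W=\bigcup_k\mathcal W_k$ of a cover by separable open sets), I would show that $M$ decomposes as a disjoint union $M=\bigsqcup_{x\in I}M_x$ of clopen separable pieces: declaring two points equivalent when they are joined by a finite chain of members of $\mathcal W$, each equivalence class is open, hence clopen. Each class is moreover separable, because it is an increasing countable union $\bigcup_n R_n$ where $R_{n+1}$ collects the members of $\mathcal W$ meeting $R_n$; only countably many members are reached at each stage, since a point lies in at most one member of each $\mathcal W_k$ and a separable $R_n$ meets only the members hitting its countable dense set. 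As disjoint open pieces are hit by any dense set, $|I|\le\dens M\le 2^\omega$, so I may assume $I\subseteq 2^\omega$. For $p\in M_x$ put $\rho(p):=d(p,M\setminus M_x)$; this is a $1$-Lipschitz function on $M$, strictly positive because each $M_x$ is open.

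Next I would construct the functions. In each piece fix a countable dense set $\{a^x_j:j\in\Nat\}$, and fix a countable family $\mathcal N$ of clopen subsets of $2^\omega$ that separates points (a clopen basis). For $j\in\Nat$ and $N\in\mathcal N$ define the tent family
\[
f_{j,N}(p):=\sup_{x\in N}\Big(\min\big(\tfrac14\rho(a^x_j),1\big)-d(p,a^x_j)\Big)^{+}.
\]
The role of the radius $\tfrac14\rho(a^x_j)$ is that the support of the $x$-th tent lies in $B\big(a^x_j,\rho(a^x_j)\big)\subseteq M_x$, so the tents appearing in the supremum have pairwise disjoint supports across distinct pieces. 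Consequently at each point at most one of them is nonzero, the supremum is finite, and a short computation (using $\rho(p)\le d(p,p')$ whenever $p,p'$ lie in different pieces) shows that each $f_{j,N}$ is $1$-Lipschitz on all of $M$; there are only countably many of them.

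It remains to check condition~\ref{cond:countable}. The heart of the matter is the pointwise lower bound
\[
\tilde d(y,z):=\sup_{j,N}\,|f_{j,N}(y)-f_{j,N}(z)|\ \ge\ \min\big(d(y,z),\,r(y)\big),\qquad r(y):=\min\big(\tfrac14\rho(y),1\big),
\]
valid for all $y,z\in M$. To prove it I let dense points $a^{x}_j\to y$ inside the piece $M_x\ni y$: along a fixed $N\ni x$ the value $f_{j,N}(y)$ tends to $r(y)$, while $f_{j,N}(z)$ tends to $(r(y)-d(y,z))^{+}$ when $z$ lies in the same piece, and—choosing $N$ with $x\in N$ but the piece of $z$ excluded, which $\mathcal N$ permits—$f_{j,N}(z)=0$ when $z$ lies in another piece (there $d(y,z)\ge\rho(y)$, so the bound reads $r(y)$). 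Granting this estimate, fix $p_0\in M_{x_0}$ and set $\varepsilon_0:=\tfrac18\rho(p_0)$; for $\varepsilon<\varepsilon_0$ every $y\in B(p_0,\varepsilon)$ has $r(y)$ bounded below by a constant $\eta_0=\eta_0(p_0)>0$. Given $g$ with $\suppt g\subseteq B(p_0,\varepsilon)$, for pairs with $d(y,z)\le\eta_0$ the estimate gives $\tilde d(y,z)\ge d(y,z)$ and hence $|g(y)-g(z)|\le\|g\|_{\Lip}\,d(y,z)\le\|g\|_{\Lip}\,\tilde d(y,z)$; for pairs with $d(y,z)>\eta_0$ one checks that $z\notin\suppt g$, so $|g(y)-g(z)|=|g(y)|\le C_g\le (C_g/\eta_0)\,\tilde d(y,z)$, where $C_g:=\sup|g|<\infty$. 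Thus $K:=\max(\|g\|_{\Lip},C_g/\eta_0)$ works and condition~\ref{cond:countable} holds; Proposition~\ref{prop:sufficient} then finishes the proof.

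The main obstacle, and the reason the naive approach fails, is the following tension. To build a globally $1$-Lipschitz function out of per-piece data one is forced to let it decay to $0$ as one approaches a neighbouring piece; yet condition~\ref{cond:countable} demands that $\tilde d$ stay bounded below between a point and everything far from it, including points sitting near the boundary of another piece, where every per-piece function is already tiny. The disjoint-support tents resolve the first difficulty while retaining a definite peak $r(y)$ near each point, and the Cantor coding by $\mathcal N$ resolves the second: without it, two isometric pieces placed symmetrically would be indistinguishable by the family and $\tilde d$ would vanish on corresponding pairs. A secondary delicate point is the separability of the chain components, for which completeness is not needed but the $\sigma$-discreteness of the refinement is essential.
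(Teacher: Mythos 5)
Your argument is correct in substance, but it takes a genuinely different route from the paper's. The paper never decomposes $M$: it works directly with a $\sigma$-discrete base $\B=\bigcup_k\B_k$ consisting of separable sets of diameter at most $1$, codes the members of each discrete family $\B_k$ by countably many subfamilies $\B_{k,l}$ (the same Cantor-coding trick you apply to your index set $I$), and takes as its countable family the distance functions $f_{k,l,m}=\dist\big(\cdot,\{\varphi_B(m)\colon B\in\B_{k,l}\}\cup(M\setminus\bigcup\B_{k,l})\big)$ together with $g_{k,l}=\dist\big(\cdot,M\setminus\bigcup\B_{k,l}\big)$; condition~\ref{cond:countable} is then verified in two separate steps (non-isolated points via the $f_{k,l,m}$, isolated points via the $g_{k,l}$) and, within each, by cases according to whether $z$ lies in the base set containing $x$. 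You instead begin with a structural reduction absent from the paper: the classical decomposition of a complete (in fact any) locally separable metric space into a disjoint union of clopen separable pieces via chain components of a $\sigma$-discrete refinement, which your chain/counting argument establishes correctly. After that, a single family of tent functions, with radii controlled by $\rho=\dist(\cdot,M\setminus M_x)$, suffices, and your key estimate $\tilde d(y,z)\ge\min\big(d(y,z),r(y)\big)$ is one clean statement with no isolated/non-isolated dichotomy. What your route buys is this uniformity (clopen pieces give disjoint supports for free); what the paper's route buys is that it avoids the decomposition theorem and manipulates the $\sigma$-discrete base itself, a structure available in every metric space, which is precisely the part of the argument one would hope to push beyond the locally separable case. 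Both proofs rest on the same pillars: Proposition~\ref{prop:sufficient}, countable dense sets in each separable unit, and Cantor coding of continuum many units by countably many labels.

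One small repair is needed in your final verification of condition~\ref{cond:countable}. With $\varepsilon_0=\tfrac18\rho(p_0)$ and $\eta_0=\min\big(\tfrac{7}{32}\rho(p_0),1\big)$ it can happen that $2\varepsilon>\eta_0$, so a pair $y,z\in B(p_0,\varepsilon)$ may satisfy $d(y,z)>\eta_0$ while $z\in\suppt g$, contradicting your claim. This costs nothing: in that case $|g(y)-g(z)|\le 2C_g$ and still $\tilde d(y,z)\ge\min\big(d(y,z),r(y)\big)\ge\eta_0$, so $K:=\max\big(\|g\|_{\Lip},2C_g/\eta_0\big)$ works (alternatively, shrink $\varepsilon_0$ so that $2\varepsilon_0\le\eta_0$). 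Similarly, your assertion that $\tilde d(y,z)\ge d(y,z)$ whenever $d(y,z)\le\eta_0$ is only justified when $y$ or $z$ lies in $B(p_0,\varepsilon)$, since $r$ can be tiny elsewhere; for pairs with both points outside that ball the required inequality holds trivially because $g$ vanishes at both. With these adjustments the proof is complete.
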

	
	One might wonder whether it is the case that even the dual unit ball of $\Lip_0(M)$ is $w^*$-separable. We note this is not the case since by \cite[Example 2.12]{AGP24} there exists a uniformly discrete metric space of density at most continuum which does not Lipschitz embed into $\ell_\infty$ (and therefore, using Proposition~\ref{prop:dualUnitBall} the dual unit ball $(B_{\Lip_0(M)},w^*)$ is not separable).
	
	The remainder of this section is mostly devoted to the proofs of Proposition~\ref{prop:sufficient} and Theorem~\ref{thm:locSep}.
	
	The following is a useful tool to recognize whether a given point in a Lipschitz-free space is zero. It is a rather straightforward consequence of the results from \cite{APPP20} concerning the notion of a support.
	
	\begin{lemma}\label{lem:notZero}
		Let $M$ be a complete metric space and $\mu\in \F(M)$. Then $\mu\neq 0$ if and only if there exists $x\in M$ such that for every $\varepsilon>0$ there is $f\in \Lip_0(M)$ with $\suppt f\subset B(x,\varepsilon)$ and $f(\mu)\neq 0$.
	\end{lemma}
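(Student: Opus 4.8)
The plan is to read the statement through the theory of \emph{supports} of elements of $\F(M)$ developed in \cite{APPP20}. Recall that, since $M$ is complete, every $\mu\in\F(M)$ has a support $\suppt\mu$, namely the smallest closed set $C\subseteq M$ with $\mu\in\F(C):=\closedSpan\,\delta(C)$, and moreover $\mu=0$ if and only if $\suppt\mu=\emptyset$; the existence of this minimal set and the last equivalence are exactly the places where completeness of $M$ enters. Granting this, the backward implication of the lemma is trivial: a single $f$ with $f(\mu)\neq0$ already forces $\mu\neq0$. Hence all the content lies in the forward implication, and for it it suffices to produce one point $x$ with the stated property; the natural candidate is any $x\in\suppt\mu$, which is nonempty precisely because $\mu\neq0$.

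The remaining task is therefore the following local description of the support: for $x\in M$ one has $x\in\suppt\mu$ if and only if for every $\varepsilon>0$ there is $f\in\Lip_0(M)$ with $\suppt f\subset B(x,\varepsilon)$ and $f(\mu)\neq0$. The key elementary ingredient is the bipolar identity $\mu\in\F(C)$ iff $f(\mu)=0$ for every $f\in\Lip_0(M)$ vanishing on $C$, valid for any closed $C$ because $\F(C)=\closedSpan\,\delta(C)$ is a closed subspace of $\F(M)$ whose annihilator in $\Lip_0(M)=\F(M)^*$ is $\{f\colon f|_C=0\}$. To prove the ``only if'' direction by contraposition, assume there is $\varepsilon>0$ such that every $f$ with $\suppt f\subset B(x,\varepsilon)$ annihilates $\mu$. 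Put $C:=M\setminus B(x,\tfrac{\varepsilon}{2})$, a closed set; any $f$ vanishing on $C$ has $\suppt f\subset\overline{B(x,\tfrac{\varepsilon}{2})}\subset B(x,\varepsilon)$, hence $f(\mu)=0$, so the bipolar identity gives $\mu\in\F(C)$ and therefore $\suppt\mu\subseteq C$, i.e. $x\notin\suppt\mu$. Conversely, if $x\notin\suppt\mu$ then, $\suppt\mu$ being closed, we may choose $\varepsilon>0$ with $\overline{B(x,\varepsilon)}\cap\suppt\mu=\emptyset$; since $\mu\in\F(\suppt\mu)$ and every $f$ with $\suppt f\subset B(x,\varepsilon)$ vanishes on $\suppt\mu$, continuity forces $f(\mu)=0$, so the condition fails at this $\varepsilon$.

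The main obstacle, and the only genuinely nontrivial input, is the nonemptiness of $\suppt\mu$ for $\mu\neq0$: this is what guarantees the existence of the witnessing point, and it is exactly the statement that can fail for incomplete $M$, where mass may ``escape'' to a missing limit point. I would simply quote it from \cite{APPP20}. Everything else is the routine bipolar argument together with the harmless bookkeeping with open versus closed balls, carried out above via the passage from $\varepsilon$ to $\tfrac{\varepsilon}{2}$.
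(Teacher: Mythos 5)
Your proof is correct and follows essentially the same route as the paper: both reduce the lemma to the support theory of \cite{APPP20}, quoting the nonemptiness of $\suppt\mu$ for $\mu\neq 0$ (exactly where completeness enters, via the Intersection theorem \cite[Theorem 2.1]{APPP20}) and then invoking the local characterization of points of $\suppt\mu$ by functions supported in small balls. The only difference is that the paper cites this characterization as \cite[Proposition 2.7]{APPP20}, whereas you reprove it inline via the Hahn--Banach/annihilator argument with $C=M\setminus B(x,\tfrac{\varepsilon}{2})$, which is a correct unpacking of that citation.
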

	\begin{proof}
		If $\mu\neq 0$, by the Intersection theorem \cite[Theorem 2.1]{APPP20} there exists $x\in \suppt \mu$ and conversely, we have $\suppt 0 = \emptyset$. Thus, we obtain that $\mu\neq 0$ if and only if $\suppt \mu\neq 0$ and now it suffices to apply \cite[Proposition 2.7]{APPP20}. 
	\end{proof}
	
	\begin{proof}[Proof of Proposition~\ref{prop:sufficient}]We may without loss of generality assume that $f_n(0)=0$, $n\in\Nat$. Let us consider the function $f:M\to \ell_\infty$ given as $f(x):=\big( f_n(x)\big)_{n\in\Nat}$. Since functions $f_n$ are 1-Lipschitz, we have that $f$ is a well-defined and $1$-Lipschitz function. Thus, by the universal property of Lipschitz-free spaces, it extends uniquely to a norm-one linear operator $T:\F(M)\to \F(\overline{\rng f})\subset \F(\ell_\infty)$ satisfying $T\circ \delta = \delta\circ f$. It suffices to prove $T$ is one-to-one (because then $T^*$ has $w^*$-dense range and since $\F(\ell_\infty)$ has $w^*$-separable dual by Proposition~\ref{prop:dualUnitBall}, we would obtain that $\F(M)$ has $w^*$-separable dual as well).
		
		Pick $0\neq \mu\in \F(M)$. Let $x\in M$ be as in Lemma~\ref{lem:notZero} and pick the corresponding $\varepsilon_0>0$ from the validity of \ref{cond:countable}. We shall check that $f(x)\in \overline{\rng f}$ satisfies the condition from Lemma~\ref{lem:notZero} for $T(\mu)\in \F(\overline{\rng f})$, which will imply that $T(\mu)\neq 0$. Pick $\varepsilon > 0$. Using the continuity of $f$ we pick $\delta\in (0,\varepsilon_0)$ satisfying that $f(B(x,\delta))\subset B(f(x),\tfrac{\varepsilon}{2})$. Using the choice of $x$ we find $g\in\Lip_0(M)$ with $\suppt g\subset B(x,\delta)$ and $g(\mu)\neq 0$. Put $\varphi:=g\circ f^{-1}$. Then using \ref{cond:countable} there exists $K>0$ such that for any $y,z\in M$  we have
		\[
		|\varphi(f(y)) - \varphi(f(z))|\leq K \|f(y)-f(z)\|_\infty,
		\]
		so the function $\varphi$ is Lipschitz on $\rng f$ and therefore we may extend it to a Lipschitz function defined on $\overline{\rng f}$ (denoting this extension again by $\varphi$). 
		
		Further, we have $\suppt \varphi\subset B(f(x),\varepsilon)$. Indeed, pick $z\in M$. If $f(z)\notin B(f(x),\tfrac{\varepsilon}{2})\supset f(B(x,\delta))$, then $z\notin B(x,\delta)\supset \suppt g$, so we have $g(z)=0$ which implies that $\varphi(f(z))=0$; thus, $\{y\in \rng f\colon \varphi(y)\neq 0\}\subset B(f(x),\tfrac{\varepsilon}{2})$ and since $\{y\in \rng f\colon \varphi(y)\neq 0\}$ is a dense subset of $\suppt \varphi$, this finishes the proof that $\suppt \varphi\subset B(f(x),\varepsilon)$.
		
		Finally, since $T\circ \delta = \delta\circ f$, we have $(\varphi\circ T)(\delta(v)) = \varphi(f(v)) = g(v) = g(\delta(v))$ for every $v\in M$, so $\varphi(T(\mu)) = g(\mu)\neq 0$ and therefore, by Lemma~\ref{lem:notZero} we obtain that $\varphi\in \Lip_0(\overline{\rng f})$ witnesses the fact that $T(\mu)\neq 0$.
	\end{proof}
	
	\begin{proof}[Proof of Theorem~\ref{thm:locSep}]
		Recall that every metric space admits a $\sigma$-discrete base, see e.g. \cite[Theorem 4.4.3]{Eng}.
		Thus, pick $\sigma$-discrete base of $M$ denoted by $\mathcal B = \bigcup_{k\in\Nat}\mathcal B_k$, where each $\B_k$ consists of open sets and is discrete (that is, for any $k\in\Nat$ and $x\in M$ there is an open neighborhood of $x$ which intersects at most one set from $\B_k$).
		Since $M$ is locally separable, we may additionally assume that every $B\in\mathcal B$ is separable and $\diam B\leq 1$
		and we fix maps $\varphi_B:\Nat \to B$ whose range is dense in $B$.
		
		Since each $\mathcal B_k$ of size at most continuum we may find $\mathcal B_{k,l}\subseteq \mathcal B_k$, $l\in\Nat$, such that for every pair of distinct sets $B, C\in \mathcal B_k$ there is some $l\in\Nat$ satisfying $B\in\mathcal B_{k,l}$ and $C\notin \mathcal B_{k,l}$ (one can for example enumerate $\B_k = \{B_s\colon s\in 2^\omega\}$ and then consider $\B_{k,l,j}:=\{B_s\colon s(l)=j\}$ for each $l\in\Nat$ and $j\in\{0,1\}$). 
		
		Finally, consider the countable family of functions $\{f_{k,l,m}\colon k,l,m\in\Nat\}\cup\{g_{k,l}\colon k,l\in\Nat\}$ given as 
		\[f_{k, l, m}(x) := \dist\Big(x, \{\varphi_{B}(m)\colon B\in \B_{k, l}\}\cup (M\setminus \bigcup \B_{k, l})\Big),\qquad x\in M,\]
		\[g_{k,l}(x):=\dist\Big(x, M\setminus \bigcup \B_{k, l}\Big),\qquad x\in M.\]
		Since each $f_{k,l,m}$ and $g_{k,l}$ is obviously $1$-Lipschitz, it remains to check that the collection $\{f_{k,l,m}\colon k,l,m\in\Nat\}\cup\{g_{k,l}\colon k,l\in\Nat\}$ satisfies condition \ref{cond:countable} (then, by Proposition~\ref{prop:sufficient} we obtain that $\Lip_0(M)$ is $w^*$-separable and we are done).
		
		\textbf{Step 1:} We fix $x\in M$ which is not isolated and check for this particular $x$ validity of condition \ref{cond:countable} (we shall see in the proof that in this Step we are using functions $f_{k,l,m}$).
		
		We can find $k\in\Nat$ and $B\in \B_k$ for which $x\in B$. Since $x$ is not isolated, there is $\varepsilon\in (0,\tfrac{1}{9})$ such that $B(x,10\varepsilon)\subseteq B$ and $B\setminus \overline{B}(x,2\varepsilon)\neq \emptyset$ (where $\overline{B}(x,2\varepsilon)$ denotes the closed ball).
		Suppose now that we are given $g\in\Lip(M)$ with $\suppt g\subseteq B(x,\varepsilon)$. We may without loss of generality assume $g$ is $1$-Lipschitz. Put 
		\[
		K:=\max\Big\{\frac{2}{9\varepsilon}, 2 + \frac{\dist(x, M\setminus B(x,\varepsilon))}{\varepsilon}\Big\}
		\]
		and pick $y,z\in M$, without loss of generality we assume that $y\in B(x,\varepsilon)$. Note that for $l\in\Nat$ with $B\in \B_{k,l}$ we have
		\[
		\{\varphi_{B'}(m)\colon B\neq B'\in \B_{k,l}\}\cup (M\setminus \bigcup \B_{k,l})\quad \subset \quad M\setminus B \quad \subset\quad  M\setminus B(x,10\varepsilon),
		\]
		and since for any $w\notin B(x,10\varepsilon)$ we have $d(w,y)\geq d(w,x) - d(x,y)\geq 9\varepsilon$, this proves that
		\begin{equation}\label{eq:fValue}
			f_{k,l,m}(y)\geq \min \{9\varepsilon, d(y,\varphi_B(m))\},\quad \text{whenever $l\in\Nat$ is such that $B\in\B_{k,l}$.}
		\end{equation}
		Now, we distinguish two cases.
		
		\textbf{Case 1:} Assume that $z\in B$. Pick $l\in\Nat$ with $B\in \B_{k,l}$ and, since $z\in B$, we find $m\in\omega$ such that $d(\varphi_B(m),z)<\frac{9\varepsilon}{2+18\varepsilon}d(y,z)$. Then using \eqref{eq:fValue} and $\diam B\leq 1$, we obtain $f_{k,l,m}(y) \geq \min\{1,9\varepsilon\}d(y,\varphi_B(m)) = 9\varepsilon d(y,\varphi_B(m))$ and of course, $f_{k,l,m}(z)\leq d(z,\varphi_B(m))$. Thus,
		\[\begin{split}
			K|f_{k,l,m}(y) - f_{k,l,m}(z)| & \geq K\big(9\varepsilon d(y,\varphi_B(m)) - d(z,\varphi_B(m))\Big) \\ 
			& \geq K\Big(9\varepsilon d(y,z) - (1+9\varepsilon)d(z,\varphi_B(m))\Big)\\
			& \geq \frac{K 9\varepsilon}{2} d(y,z) \geq \frac{K 9\varepsilon}{2}|g(y)-g(z)| \geq |g(y)-g(z)|,
		\end{split}\]
		which verifies the condition \ref{cond:countable} in Case 1.
		
		\medskip
		
		\textbf{Case 2:} Assume that $z\notin B$. Notice that then we can find $l\in\Nat$ such that $z\notin \bigcup \B_{k,l}$. Indeed, this is obvious if $z\notin \bigcup \B_k$ and if $z\in C\in \B_k$ with $C\neq B$, then we just pick $l\in\Nat$ satisfying $B\in \B_{k,l}$ and $C\notin \B_{k,l}$ which due to the fact that $\B_k$ is discrete implies that $z\notin \bigcup \B_{k,l}$.
		
		Since $B\setminus \overline{B}(x,2\varepsilon)\neq \emptyset$, there exists $\varphi_B(m)$ with $d(\varphi_B(m),x) > 2\varepsilon$ and therefore $d(\varphi_B(m),y) > \varepsilon$, which using \eqref{eq:fValue} implies that $f_{k,l,m}(y) > \varepsilon$. Further, since $z\in M\setminus \bigcup \B_{k,l}$ we have $f_{k,l,m}(z) = 0$. Thus, we obtain
		\[\begin{split}
			|g(y)-g(z)| & = |g(y)| \leq \varepsilon + |g(x)|\leq \varepsilon + \dist\big(x, M\setminus B(x,\varepsilon)\big)\\
			& \leq K\varepsilon < K|f_{k,l,m}(z) - f_{k,l,m}(y)|,
		\end{split}\]
		which verifies condition \ref{cond:countable} in Case 2.
		
		\medskip
		
		\textbf{Step 2:} We fix $x\in M$ which is isolated and check for this particular $x$ validity of condition \ref{cond:countable} (we shall see in the proof that in this Step we are using functions $g_{k,l}$).
		
		We can find $k\in\Nat$ and $B\in \B_k$ for which $\{x\} = B$. Further, pick $\varepsilon>0$ with $\varepsilon < d(x,M\setminus\{x\})$ and $g\in\Lip_0(M)$ with $\suppt g\subset B(x,\varepsilon) = \{x\}$. We may without loss of generality assume $g$ is $1$-Lipschitz. Put $K:=1$ and fix $y,z\in M$. We may without loss of generality assume that $y=x$. Notice that we can find $l\in\Nat$ such that $z\notin \bigcup \B_{k,l}$. Indeed, this is obvious if $z\notin \bigcup \B_k$ and if $z\in C\in \B_k$ with $C\neq \{x\} = B$, then we just pick $l\in\Nat$ satisfying $B\in\B_{k,l}$ and $C\notin \B_{k,l}$ which due to the fact that $\B_k$ is discrete implies that $z\notin \bigcup \B_{k,l}$. Thus, we obtain $g_{k,l}(x) \geq d(x,M\setminus\{x\})$ and $g_{k,l}(z) = 0$ and therefore,
		\[
		|g(y)-g(z)|\leq d(x,z)\leq d(x,M\setminus\{x\})\leq g_{k,l}(x) - g_{k,l}(z) = |g_{k,l}(x) - g_{k,l}(z)|,
		\]
		which verifies condition \ref{cond:countable}.
	\end{proof}
	
	Let us note the that Theorem~\ref{thm:locSep} seems to be actually quite close to the general answer as we have the following (which implies that if we were able to prove Theorem~\ref{thm:locSep} without the assumption that $M$ is complete, we would have a positive answer to the whole Conjecture~\ref{conj:main}).
	
	\begin{fact}\label{fact:discreteSubset}Any metric space $M$ embeds isometrically into completion of a discrete metric space of the cardinality equal to $\dens M$.\\
		(Note: this discrete space is even a countable union of uniformly discrete spaces)
	\end{fact}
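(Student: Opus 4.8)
The plan is to realize $M$ as the set of \emph{new} limit points created when one takes a suitable dense net in $M$ and perturbs its metric so as to isolate every point. First I would set $\kappa:=\dens M$ (which we may assume infinite, the finite case being trivial since a finite metric space is already discrete and complete) and, for each $n\in\Nat$, use Zorn's lemma to pick a \emph{maximal} subset $S_n\subseteq M$ with $d(x,y)\ge\tfrac1n$ for all distinct $x,y\in S_n$. Maximality makes $S_n$ simultaneously $\tfrac1n$-separated (hence uniformly discrete) and $\tfrac1n$-dense in $M$; comparing the pairwise disjoint balls $B(s,\tfrac{1}{2n})$, $s\in S_n$, against a fixed dense subset of $M$ shows $\card S_n\le\kappa$.

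Next I would assemble the discrete space as a tagged disjoint union $D:=\{(s,n)\colon n\in\Nat,\ s\in S_n\}$ equipped with the perturbed metric
\[
\rho\big((s,n),(t,k)\big):=d(s,t)+\tfrac1n+\tfrac1k\qquad\text{for }(s,n)\neq(t,k),
\]
and $\rho=0$ on the diagonal. A routine verification (the middle point $(u,j)$ contributes the extra nonnegative term $2/j$) shows $\rho$ satisfies the triangle inequality, so $(D,\rho)$ is genuinely a metric space. Since $\rho\big((s,n),(t,k)\big)\ge\tfrac1n+\tfrac1k>\tfrac1n$ for every point distinct from $(s,n)$, the $\rho$-ball of radius $\tfrac1n$ about $(s,n)$ is the singleton $\{(s,n)\}$, so $D$ is discrete; writing $D_n:=S_n\times\{n\}$ one gets $\rho\ge\tfrac2n$ on $D_n$, so $D=\bigcup_{n}D_n$ exhibits $D$ as a countable union of uniformly discrete spaces. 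Finally $\card D\le\aleph_0\cdot\kappa=\kappa$.

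It then remains to embed $(M,d)$ isometrically into the completion $\overline D$. Given $m\in M$, I would use $\tfrac1n$-density of $S_n$ to pick $s_n\in S_n$ with $d(s_n,m)<\tfrac1n$; then $\rho\big((s_n,n),(s_k,k)\big)\le d(s_n,m)+d(m,s_k)+\tfrac1n+\tfrac1k<\tfrac2n+\tfrac2k$ shows $\big((s_n,n)\big)_n$ is $\rho$-Cauchy, and I let $\iota(m)\in\overline D$ be its equivalence class. For $m,m'\in M$ one computes $\overline\rho\big(\iota(m),\iota(m')\big)=\lim_n\big(d(s_n,s_n')+\tfrac2n\big)=\lim_n d(s_n,s_n')=d(m,m')$, since $s_n\to m$ and $s_n'\to m'$ in $(M,d)$; the same computation with $m=m'$ shows $\iota$ is independent of the chosen sequences, and being isometric it is injective. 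As $\iota(M)\subseteq\overline D$ forces $\kappa=\dens M\le\dens\overline D=\dens D\le\card D\le\kappa$, all these quantities coincide, giving $\card D=\kappa$ and completing the argument. The main obstacle, and the single place where both properties of the net are needed at once, is precisely this last isometry check: the additive heights $\tfrac1n$ must vanish in the completion (so that $\overline\rho$ recovers $d$ and not an inflated metric) while staying strictly positive at every finite stage (so that $D$ remains discrete), and it is the $\tfrac1n$-density of each $S_n$ that allows those vanishing heights to be realized along honestly $d$-convergent sequences.
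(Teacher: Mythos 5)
Your proposal is correct and is essentially the paper's own argument: the paper likewise takes maximal $\tfrac1n$-separated sets $A_n$ and places the $n$-th one at height $\tfrac1n$, forming $Y:=\{(x,\tfrac1n)\colon x\in A_n,\ n\in\Nat\}\subset M\oplus_1[0,1]$, so that $M\times\{0\}$ lies in the closure of $Y$, with the same discreteness and uniform-discreteness-per-level observations. The only real difference is in implementation: the paper uses the metric induced from the ambient space $M\oplus_1[0,1]$ (penalty $|\tfrac1n-\tfrac1k|$ between levels), which makes the isometric embedding into the completion automatic, whereas you define the penalty $\tfrac1n+\tfrac1k$ abstractly and then verify the Cauchy-sequence, well-definedness, and isometry details by hand.
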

	\begin{proof}For each $n\in\mathbb{N}$ pick a maximal $\tfrac{1}{n}$-separated subset of $M$ and call it $A_n$. Then put $Y:=\{(x,\tfrac{1}{n})\colon x\in A_n,\; n\in\Nat\}\subset M\oplus_1 [0,1]$ (that is, the metric on $M\times [0,1]$ is given by $d((x,t),(x',t')) = d(x,x') + |t-t'|$). Of course, $M$ is isometric to $\tilde{M} = M\times \{0\}\subset M\oplus_1 [0,1]$ so it suffices to check that $\tilde{M}\subset \overline{Y}$ and that $Y$ is discrete.
		
		Pick $x\in M$, $n\in \Nat$ and using the maximality of $A_n$ find $a_n\in A_n$ with $d(x,a_n)\leq \tfrac{1}{n}$. Then $d((x,0),(a_n,\tfrac{1}{n}))\leq \tfrac{2}{n}$, so we have $(a_n,\tfrac{1}{n})\to (x,0)$ and therefore $\tilde{M}\subset \overline{Y}$.
		
		Further, the set $Y$ is discrete, because for $n,m\in\Nat$, $a\in A_n$ and $a'\in A_m$ the distance between $(a,\tfrac{1}{n})$ and $(a',\tfrac{1}{m})$ is at least $\tfrac{1}{n}$ if $n=m$ and at least $\min\{\tfrac{1}{n-1}-\tfrac{1}{n},\tfrac{1}{n}-\tfrac{1}{n+1}\}$ if $n\neq m$.\\
		(Note: $Y$ is even a countable union of uniformly discrete sets.)
	\end{proof}
	
	\section{Further possible reductions}\label{sec:banach}
	
	The goal of this section is to derive new characterizations for the $w^*$-separability of spaces of Lipschitz functions and their unit balls. We begin with the following auxiliary proposition. It might be seen as the extension of the known fact that $\ell_1(2^\omega)$ embeds isometrically into $\ell_\infty$ (see e.g. \cite[Fact 7.26]{HMVZ}).
	
	\begin{prop}\label{prop:ell1Injection}
		There is a linear isometric embedding of $\left(\bigoplus_{2^{\omega}}\ell_\infty\right)_{\ell_1}$ into $\ell_\infty$.
	\end{prop}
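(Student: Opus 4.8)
The plan is to reduce the statement, via Fact~\ref{fact:denseInDualUnitBall} applied with $\kappa=\omega$, to a single density computation. Writing $X:=\left(\bigoplus_{2^\omega}\ell_\infty\right)_{\ell_1}$ (which is clearly nonseparable, as it contains an isometric copy of $\ell_1(2^\omega)$), recall that a linear isometric embedding of $X$ into $\ell_\infty(\omega)=\ell_\infty$ exists precisely when $w^*\text{-}\dens B_{X^*}\le\omega$. So the whole task becomes showing that $(B_{X^*},w^*)$ is separable.

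First I would identify the dual. Since the dual of an $\ell_1$-sum is the $\ell_\infty$-sum of the duals, we have $X^*=\left(\bigoplus_{2^\omega}\ell_\infty^*\right)_{\ell_\infty}$, so that $B_{X^*}$ is exactly the product $\prod_{\gamma<2^\omega}B_{\ell_\infty^*}$. The elements $e_\gamma\otimes x$ (a vector $x\in\ell_\infty$ placed in the $\gamma$-th coordinate and $0$ elsewhere) have finite linear combinations dense in $X$, so on the norm-bounded set $B_{X^*}$ the $w^*$-topology coincides with the product of the coordinatewise $w^*$-topologies. Thus $(B_{X^*},w^*)$ is homeomorphic to $\prod_{\gamma<2^\omega}(B_{\ell_\infty^*},w^*)$.

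Next I would check that the single factor $(B_{\ell_\infty^*},w^*)$ is separable. This follows from the same Hahn--Banach argument used in Fact~\ref{fact:denseInDualUnitBall}: the coordinate evaluations $e_n^*\in B_{\ell_\infty^*}$ satisfy $\sup_n|e_n^*(x)|=\|x\|_\infty$ for every $x\in\ell_\infty$, whence $\overline{\conv}^{w^*}\{\pm e_n^*:n\in\omega\}=B_{\ell_\infty^*}$, and the countable set of rational convex combinations of the $\pm e_n^*$ is $w^*$-dense. Hence each factor is a separable (indeed $w^*$-compact, by Banach--Alaoglu) topological space.

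Finally, the key topological input is the Hewitt--Marczewski--Pondiczery theorem: a product of at most $2^\omega$ separable spaces is separable. Applying it to $\prod_{\gamma<2^\omega}(B_{\ell_\infty^*},w^*)$ (exactly $2^\omega$ factors, each separable) yields that $(B_{X^*},w^*)$ is separable, and Fact~\ref{fact:denseInDualUnitBall} then delivers the desired embedding. I expect the only genuinely delicate point to be the topological identification in the second step --- verifying that the full $w^*$-topology on the ball really is the product of the factorwise $w^*$-topologies, which rests on the density of the finite-support elements $e_\gamma\otimes x$ in $X$; once this is in place, the separability of the factor and the cardinal arithmetic $2^\omega\le 2^\omega$ feeding Hewitt--Marczewski--Pondiczery are routine.
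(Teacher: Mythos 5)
Your proof is correct, but it takes a genuinely different route from the paper's. You work on the dual side: identify $X^*=\left(\bigoplus_{2^\omega}\ell_\infty^*\right)_{\ell_\infty}$, check that $(B_{X^*},w^*)$ is homeomorphic to the product $\prod_{\gamma<2^\omega}(B_{\ell_\infty^*},w^*)$ (your topological identification is sound: the map is a continuous bijection between compact Hausdorff spaces, or equivalently one argues via uniform boundedness and density of the finitely supported vectors), note each factor is $w^*$-separable, and invoke the Hewitt--Marczewski--Pondiczery theorem before translating back through Fact~\ref{fact:denseInDualUnitBall}. The paper instead stays on the predual side and builds the countable norming family explicitly: it indexes the $\ell_1$-sum by the Cantor cube $K=\{0,1\}^{\Nat}$, and for every continuous finite-valued map $f_n:K\to\{1,\dots,r\}$ (i.e.\ every finite clopen partition of $K$), every choice of a coordinate $u(j)\in\Nat$ and a sign $s(j)\in\{-1,1\}$ per piece, it forms the functional $\xi_{(n,u,s)}$ that applies the signed coordinate evaluation $s(j)e^*_{u(j)}$ to all summands lying in the $j$-th piece; a direct $\epsilon$-argument shows these countably many norm-one functionals are $1$-norming, so $x\mapsto(\xi_{(n,u,s)}(x))$ is the desired isometry. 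The two arguments are morally the same---the paper's construction is exactly the ``locally constant along a clopen partition'' pattern occurring inside the standard proof of Hewitt--Marczewski--Pondiczery, with the dense sets in the factors replaced by the norming family $\{\pm e_n^*\}$---but they buy different things. The paper's version is self-contained (no dual-space identification, no external general-topology theorem) and exhibits the embedding concretely. Yours is shorter modulo the HMP black box and generalizes at no extra cost: the identical argument shows that an $\ell_1$-sum of at most $2^\omega$ spaces with $w^*$-separable dual balls again has $w^*$-separable dual ball, which is precisely the isometric half of Corollary~\ref{cor:preservation}, and it adapts verbatim to higher cardinals ($2^\kappa$ summands, target $\ell_\infty(\kappa)$). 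One cosmetic point: your verification that $(B_{\ell_\infty^*},w^*)$ is separable is just Fact~\ref{fact:denseInDualUnitBall} applied to the identity embedding of $\ell_\infty$ into itself, so it could be cited rather than reproved.
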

	\begin{proof}
		Let us denote $X=\left(\bigoplus_{2^{\omega}}\ell_\infty\right)_{\ell_1}$ and let $K=\{0,1\}^{\Nat}$ be the Cantor cube. Consider the set $\varGamma$ of all continuous functions of the form $f:K\to \{1,2,\ldots,r\}$ for some $r\in \Nat$. We notice that $\varGamma$ is countable since there are only countably many clopen subsets of $K$. If $\varGamma=\{f_n:n \in \Nat\}$ we consider
		\[\varLambda=\{(n,u,s): n \in \Nat, \ u:\rng(f_n) \to \Nat, \  s:\rng(f_n) \to \{-1,1\} \}.\] 
		For each $(n,u,s)\in \varLambda$ we consider the functional $\xi_{(n,u,s)}:X\to \Rea$ defined, for every $x=(x(t))_{t\in K}$, as follows:
		\[\xi_{(n,u,s)}(x)=\sum_{j\in \rng(f_n)}\sum_{t\in f_n^{-1}[\{j\}]} s(j)\cdot x(t)(u(j)).\]
		It is readily seen that $\xi_{(n,u,s)}\in X^*$ and $\|\xi_{(n,u,s)}\|\leq 1$, for each $(n,u,s) \in \varLambda$.
		Now we claim that $\{\xi_{(n,u,s)}:(n,u,s)\in \varLambda\}$ is a $1$-norming subset of $B_{X^*}$. Indeed, let $x=(x(t))_{t\in K}\in X$ and $\epsilon>0$ be arbitrary.  Let $U\subset K$ be a finite set such that 
		$\sum_{t \in U}\|x(t)\|_{\ell_\infty}>\|x\|-\frac{\epsilon}{4}$.
		Assuming that $|U|=m$, for each $t \in U$, let $n_t$ be such that $|x(t)(n_t)|>\|x(t)\|_{\ell_\infty}-\frac{\epsilon}{2m}$. Let $u:\{1,2,\ldots,m\}\to \{n_t:t\in U\}$ be a surjective function and $s:\{1,2,\ldots,m\}\to \{-1,1\}$ be given by $s(j)=\sgn(x(t)(u(j)))$, where $t$ is such that $n_t = u(j)$.
		Finally we fix $n \in \Nat$ such that $|\rng(f_n)|= m$ and, for each $1\leq j \leq m$ and $t\in U$ with $n_t=u(j)$, we have $t\in f_n^{-1}[\{j\}]$. Then
		\begin{align*}
			|\xi_{(n,u,s)}(x)|& \geq \sum_{t\in U}|x(t)(n_t)|-\sum_{t\in K\setminus U}\|x(t)\|_{\ell_\infty} \geq \Big(\sum_{t\in U}\|x(t)\|_{\ell_\infty}-\frac{\epsilon}{2}\Big)-\sum_{t\in K\setminus U}\|x(t)\|_{\ell_\infty}\\
			&\geq \|x\|-2\Big(\sum_{t\in K\setminus U}\|x(t)\|_{\ell_\infty}\Big)-\frac{\epsilon}{2}\geq \|x\|-\epsilon.
		\end{align*}
		Thus, $X\ni x\mapsto (\xi_{n,u,s}(x))_{(n,u,s)\in\Lambda}\in \ell_\infty(\Lambda)$ is linear isometry, which finishes the proof because $\Lambda$ is a countable set.
	\end{proof}
	
	As an almost immediate consequence we obtain the following.
	
	\begin{cor}\label{cor:preservation}Let $X_i$, $i<2^\omega$ be Banach spaces with $w^*$-separable duals (resp. $w^*$-separable dual unit balls). Assume there exists an injective bounded linear operator (resp. linear isometry) from a Banach space $Y$ into $\Big(\bigoplus_{2^\omega} X_i\Big)_{\ell_1}$. Then $Y$ has $w^*$-separable dual (resp. $w^*$-separable dual unit ball) as well.
	\end{cor}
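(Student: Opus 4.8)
The plan is to reduce the statement to the case of a single $\ell_\infty(\kappa)$ factor, where $w^*$-separability (of the dual, or of the dual unit ball) is already understood through Proposition~\ref{prop:ell1Injection} and Fact~\ref{fact:denseInDualUnitBall}. First I would handle the dual-unit-ball version, since that is where the isometry hypothesis gives the cleanest statement. By Fact~\ref{fact:denseInDualUnitBall} (applied with $\kappa=\omega$), a Banach space has $w^*$-separable dual unit ball if and only if it admits a linear isometry into $\ell_\infty=\ell_\infty(\omega)$. Thus each $X_i$ embeds linearly isometrically into $\ell_\infty$. The key observation is that an $\ell_1$-sum of such embeddings is again a linear isometry: the map
\[
\Big(\bigoplus_{2^\omega} X_i\Big)_{\ell_1}\longrightarrow \Big(\bigoplus_{2^\omega}\ell_\infty\Big)_{\ell_1},\qquad (x_i)_{i<2^\omega}\mapsto (T_i x_i)_{i<2^\omega},
\]
preserves the $\ell_1$-norm coordinatewise because each $T_i$ is an isometry. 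Composing this with the isometric embedding of $\left(\bigoplus_{2^\omega}\ell_\infty\right)_{\ell_1}$ into $\ell_\infty$ furnished by Proposition~\ref{prop:ell1Injection} yields a single linear isometry from $\left(\bigoplus_{2^\omega} X_i\right)_{\ell_1}$ into $\ell_\infty$. Finally, precomposing with the given linear isometry $Y\to\left(\bigoplus_{2^\omega}X_i\right)_{\ell_1}$ produces a linear isometry $Y\to\ell_\infty$, and by Fact~\ref{fact:denseInDualUnitBall} again this is exactly the statement that $Y$ has a $w^*$-separable dual unit ball.

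For the dual version (injective bounded operators rather than isometries, and $w^*$-separability of the whole dual rather than just the ball) I would run the same composition, but now tracking a bounded injective operator instead of an isometry. By Fact~\ref{fact:denseInDualUnitBall} each $X_i$ with $w^*$-separable dual admits a linear isometry $T_i:X_i\to\ell_\infty$, so in particular an injective bounded operator; forming the $\ell_1$-sum as above gives an injective bounded operator into $\left(\bigoplus_{2^\omega}\ell_\infty\right)_{\ell_1}$ (injectivity is inherited coordinatewise, and boundedness follows since $\sup_i\|T_i\|<\infty$ can be arranged, each $T_i$ being norm-one). Composing with the isometry of Proposition~\ref{prop:ell1Injection} and then with the given operator $Y\to\left(\bigoplus_{2^\omega}X_i\right)_{\ell_1}$, we obtain an injective bounded linear operator $S:Y\to\ell_\infty$. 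The adjoint $S^*:\ell_1=\ell_\infty^*\to Y^*$ is then $w^*$-$w^*$ continuous with $w^*$-dense range (density of the range being dual to injectivity of $S$), and since $\ell_\infty$ has $w^*$-separable dual, the image of a countable $w^*$-dense subset of $B_{\ell_1}$ under $S^*$ is $w^*$-dense in $Y^*$; hence $Y^*$ is $w^*$-separable.

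The two cases are genuinely parallel, the only subtlety being that in the dual-ball case one must keep everything isometric all the way through (which is why the hypothesis is phrased with an isometry), whereas in the dual case mere injectivity and boundedness suffice and one passes to $w^*$-density of the adjoint range. The main point where care is needed is verifying that the coordinatewise map on the $\ell_1$-sum is well-defined and has the claimed properties on the full sum over $2^\omega$ factors --- i.e. that isometry (resp. bounded injectivity) of each $T_i$ really does lift to the infinite $\ell_1$-direct sum --- but this is a routine computation with the $\ell_1$-norm $\sum_{i<2^\omega}\|x_i\|$ and presents no real obstacle once the factorization through Proposition~\ref{prop:ell1Injection} is in place. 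I expect no serious difficulty beyond assembling these standard pieces in the right order.
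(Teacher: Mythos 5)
Your overall route is the same as the paper's: take embeddings $T_i$ of the factors into $\ell_\infty$, form the coordinatewise map from $\big(\bigoplus_{2^\omega}X_i\big)_{\ell_1}$ into $\big(\bigoplus_{2^\omega}\ell_\infty\big)_{\ell_1}$, compose with Proposition~\ref{prop:ell1Injection}, and finish with the characterizations of $w^*$-separability. The dual-unit-ball half of your argument is correct and is exactly the paper's proof.

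In the dual (non-ball) case, however, there is a genuine error. You claim that ``by Fact~\ref{fact:denseInDualUnitBall}, each $X_i$ with $w^*$-separable dual admits a linear isometry $T_i:X_i\to\ell_\infty$.'' Fact~\ref{fact:denseInDualUnitBall} characterizes $w^*$-separability of the dual \emph{unit ball}, not of the whole dual; a space whose dual is $w^*$-separable need not embed isometrically into $\ell_\infty$ at all. Indeed, the existence of spaces with $w^*$-separable dual but non-$w^*$-separable dual ball is a theme of this very paper (see Remark~\ref{rem:sepNotSep} and the references \cite{DS79}, \cite{AP14}); if your claim were true, the two notions in the statement of the corollary would coincide and the ``resp.''~would be vacuous. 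The correct tool is the standard fact, cited in the paper's proof as \cite[Fact 4.10]{HMVZ}, that $X^*$ is $w^*$-separable if and only if there is a bounded linear \emph{injection} of $X$ into $\ell_\infty$; this yields norm-one injective operators $T_i$, whose coordinatewise sum is a norm-one injection of $\big(\bigoplus_{2^\omega}X_i\big)_{\ell_1}$ into $\big(\bigoplus_{2^\omega}\ell_\infty\big)_{\ell_1}$, after which your composition argument goes through unchanged. A second, smaller slip in the same passage: $\ell_\infty^*$ is not $\ell_1$ (it is far larger); what you actually need is that the canonical copy of $\ell_1$ inside $\ell_\infty^*$ is $w^*$-dense (it separates the points of $\ell_\infty$), so that the rational finitely supported sequences form a countable $w^*$-dense subset of $\ell_\infty^*$ whose image under $S^*$ is $w^*$-dense in $Y^*$ --- or simply quote \cite[Fact 4.10]{HMVZ} once more for $Y$. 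Both repairs are local and leave your structure intact, but as written the dual case conflates precisely the two separability notions that the corollary distinguishes.
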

	\begin{proof}It suffices to prove that there is a linear injection (resp. isometry) from $\Big(\bigoplus_{2^\omega} X_i\Big)_{\ell_1}$ into $\ell_\infty$ (see e.g. \cite[Fact 4.10]{HMVZ}, resp. Fact~\ref{fact:denseInDualUnitBall}). By the assumption, there are norm-one injective linear mappings (resp. isometries) $T_i:X_i\to \ell_\infty$. Thus, the mapping $T:\Big(\bigoplus_{2^\omega} X_i\Big)_{\ell_1} \to \Big(\bigoplus_{2^\omega} \ell_\infty\Big)_{\ell_1}$ given by $T((x_i)_i):=(T_i(x_i))_i$ is bounded linear injection (resp. isometry) and so it suffices to use Proposition~\ref{prop:ell1Injection} to obtain a linear injection (resp. isometry) from $\Big(\bigoplus_{2^\omega} X_i\Big)_{\ell_1}$ into $\ell_\infty$.
	\end{proof}
	
	Another quite an immediate consequence is the following.
	
	\begin{cor}\label{cor:charactWeakStarSep}Let $X$ be a Banach space. Then the following conditions are equivalent.
		\begin{enumerate}[label=(\alph*)] 
			\item\label{it:weakStarSep} $(X^*,w^*)$ is separable
			\item\label{it:intoEll} There is a bounded linear injection of $X$ into $\ell_\infty$.
			\item\label{it:intoSumEll} There is a bounded linear injection of $X$ into $\left(\bigoplus_{2^{\omega}}\ell_\infty\right)_{\ell_1}$.
		\end{enumerate}
		By replacing $X^*$ with $B_{X^*}$ in condition~\ref{it:weakStarSep} and using a linear isometric embedding instead of a bounded linear injection in conditions \ref{it:intoEll} and \ref{it:intoSumEll}, we also obtain a characterization for the $w^*$-separability of $B_{X^*}$.
	\end{cor}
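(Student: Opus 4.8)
The plan is to run the two equivalences \ref{it:weakStarSep}$\Leftrightarrow$\ref{it:intoEll} and \ref{it:intoEll}$\Leftrightarrow$\ref{it:intoSumEll} separately, since essentially all the genuine work is already packaged in Fact~\ref{fact:denseInDualUnitBall} and Proposition~\ref{prop:ell1Injection}; what remains are standard duality manipulations.

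For \ref{it:weakStarSep}$\Rightarrow$\ref{it:intoEll}, I would start from a countable $w^*$-dense set $\{f_n : n\in\Nat\}\subset X^*$. Such a set necessarily separates the points of $X$: if $x\neq 0$ and $f\in X^*$ has $f(x)\neq 0$, the $w^*$-neighborhood $\{g : |g(x)-f(x)|<|f(x)|\}$ of $f$ must contain some $f_n$, forcing $f_n(x)\neq 0$. I would then rescale to recover boundedness, e.g. define $T:X\to\ell_\infty$ by $T(x):=\big(2^{-n}(1+\|f_n\|)^{-1}f_n(x)\big)_{n\in\Nat}$; this is a well-defined (indeed $c_0$-valued) bounded linear map which is injective precisely because $\{f_n\}$ separates points. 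Conversely, for \ref{it:intoEll}$\Rightarrow$\ref{it:weakStarSep} I would take the coordinate functionals $g_n:=e_n^*\circ T\in X^*$ of a bounded linear injection $T:X\to\ell_\infty$. Injectivity of $T$ says exactly that $\bigcap_n\ker g_n=\{0\}$, hence $\closedSpan^{w^*}\{g_n : n\in\Nat\}=(\{0\})^\perp=X^*$; taking rational linear combinations of the $g_n$ then yields a countable $w^*$-dense subset of $X^*$.

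The equivalence \ref{it:intoEll}$\Leftrightarrow$\ref{it:intoSumEll} is then immediate. One direction is trivial because $\ell_\infty$ embeds isometrically into $\big(\bigoplus_{2^\omega}\ell_\infty\big)_{\ell_1}$ as a single summand, so any injection into $\ell_\infty$ composes to one into the $\ell_1$-sum. For the other direction I would simply compose a given injection $X\to\big(\bigoplus_{2^\omega}\ell_\infty\big)_{\ell_1}$ with the isometric embedding $\big(\bigoplus_{2^\omega}\ell_\infty\big)_{\ell_1}\hookrightarrow\ell_\infty$ supplied by Proposition~\ref{prop:ell1Injection}.

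Finally, for the isometric variant I would repeat the same scheme verbatim, replacing the equivalence \ref{it:weakStarSep}$\Leftrightarrow$\ref{it:intoEll} by the $\kappa=\omega$ case of Fact~\ref{fact:denseInDualUnitBall} (which is precisely the statement that $w^*$-separability of $B_{X^*}$ is equivalent to the existence of a linear isometry $X\to\ell_\infty$), and observing that both composition steps above preserve isometries. I do not expect any real obstacle here, as the substance is carried by the cited results: the only points requiring a little care are the rescaling that turns a bare $w^*$-dense sequence into a bounded operator, and the identity $\closedSpan^{w^*}\{g_n\}=\big(\bigcap_n\ker g_n\big)^\perp$ relating point separation to $w^*$-density.
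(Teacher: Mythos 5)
Your proposal is correct and follows essentially the same route as the paper: both reduce the equivalence (a)$\Leftrightarrow$(b) to the standard duality fact (which the paper simply cites as \cite[Fact 4.10]{HMVZ} and Fact~\ref{fact:denseInDualUnitBall}, while you unpack the rescaling and bipolar arguments explicitly), treat (b)$\Rightarrow$(c) as trivial, and obtain (c)$\Rightarrow$(b) by composing with the isometric embedding from Proposition~\ref{prop:ell1Injection}. The isometric variant for $B_{X^*}$ is handled identically in both, via the $\kappa=\omega$ case of Fact~\ref{fact:denseInDualUnitBall}.
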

	\begin{proof}The equivalence between \ref{it:weakStarSep} and \ref{it:intoEll} is a well-known fact (see \cite[Fact 4.10]{HMVZ} and Fact~\ref{fact:denseInDualUnitBall}). The implication \ref{it:intoEll} $\Rightarrow$ \ref{it:intoSumEll} is evident and \ref{it:intoSumEll} $\Rightarrow$ \ref{it:intoEll} follows immediately from Proposition~\ref{prop:ell1Injection}.
	\end{proof}
	
	Now, we shall proceed towards the proof that Lipschitz-free spaces over $\ell_\infty$ and its $c_0(2^\omega)$-sum are linearly isomorphic, see Theorem~\ref{thm:c0Andellinfty}. The first step towards the proof is in \ref{Prop:C0ellinftyisanAbsoluteRetract}. We believe it is a part of a folklore knowledge, but since we did not find a reference, for the convenience of the reader we write the argument below. We emphasize the ``In particular'' part, since this is the one which we shall need in the proof of Theorem \ref{thm:c0Andellinfty}.
	
	\begin{prop}\label{Prop:C0ellinftyisanAbsoluteRetract}Given a set $I$, $\lambda>0$ and Banach spaces $X_i$, $i\in I$ which are absolute $\lambda$-Lipschitz retracts, the Banach spaces $\left(\bigoplus_{I} X_i\right)_{\ell_\infty}$ and $\left(\bigoplus_{I} X_i\right)_{c_0}$ are absolute $\lambda$-Lipschitz retract and absolute $2\lambda$-Lipschitz retract, respectively.
		
		In particular, $\left(\bigoplus_{I}\ell_\infty\right)_{c_0}$ is an absolute Lipschitz retract.
	\end{prop}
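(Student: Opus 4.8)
The plan is to prove the two structural statements about $\ell_\infty$-sums and $c_0$-sums separately, and then to deduce the ``In particular'' part as the special case where every $X_i = \ell_\infty$. Recall that $\ell_\infty$ itself is an absolute $1$-Lipschitz retract (this is the classical fact that $\ell_\infty(\Gamma)$ is injective, with the nonlinear version following from the scalar-valued extension of Lipschitz functions coordinatewise); thus each $X_i = \ell_\infty$ is an absolute $1$-Lipschitz retract and the hypotheses of the proposition are met with $\lambda = 1$.

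For the $\ell_\infty$-sum, I would argue directly by extending coordinatewise. Suppose $\left(\bigoplus_I X_i\right)_{\ell_\infty}$ is isometrically contained in a metric space $N$, and let $\pi_i : \left(\bigoplus_I X_i\right)_{\ell_\infty} \to X_i$ denote the canonical ($1$-Lipschitz) projections. Each composition $\pi_i$ is a $1$-Lipschitz map into the absolute $\lambda$-Lipschitz retract $X_i$, so it extends to a $\lambda$-Lipschitz map $\widetilde{\pi_i} : N \to X_i$. The natural candidate retraction is $r(y) := (\widetilde{\pi_i}(y))_{i\in I}$. The key point to verify is that $r$ actually lands in the $\ell_\infty$-sum (i.e. that $\sup_i \|\widetilde{\pi_i}(y)\|$ is finite) and that it is $\lambda$-Lipschitz; both follow because $\|\widetilde{\pi_i}(y) - \widetilde{\pi_i}(y')\| \leq \lambda\, d(y,y')$ holds with the \emph{same} constant $\lambda$ uniformly in $i$, so taking the supremum over $i$ preserves the bound. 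That $r$ restricts to the identity on the sum is immediate since each $\widetilde{\pi_i}$ extends $\pi_i$.

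For the $c_0$-sum, the obstacle is that the coordinatewise construction above need not land in $c_0$: even if $y$ is close to the $c_0$-sum, the extended coordinates $\widetilde{\pi_i}(y)$ need not tend to zero along $I$. The standard fix is to first retract $\left(\bigoplus_I X_i\right)_{\ell_\infty}$ onto $\left(\bigoplus_I X_i\right)_{c_0}$ via a ``truncation'' Lipschitz retraction, and then compose with the $\ell_\infty$-sum retraction from the previous paragraph. Concretely, I would exhibit a $\lambda'$-Lipschitz retraction $\rho$ of the $\ell_\infty$-sum onto the $c_0$-sum — for instance by a coordinatewise radial contraction that pushes each coordinate toward $0$ by an amount depending on $\sup_i \|x_i\|$, designed so that the output has coordinates tending to $0$ while points already in the $c_0$-sum are fixed. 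Taking $\lambda' = 2$ (the soft-thresholding type map has Lipschitz constant $2$) and composing $\rho$ after the $\ell_\infty$-retraction yields an absolute $2\lambda$-Lipschitz retract structure on the $c_0$-sum. I expect this construction of the internal retraction $\rho$ of the $\ell_\infty$-sum onto the $c_0$-sum, together with checking its Lipschitz constant and that it fixes the $c_0$-sum pointwise, to be the main technical obstacle; everything else reduces to the coordinatewise extension principle. Finally, specializing to $X_i = \ell_\infty$ with $\lambda = 1$ gives that $\left(\bigoplus_I \ell_\infty\right)_{c_0}$ is an absolute Lipschitz retract, as claimed.
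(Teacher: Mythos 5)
Your proposal follows the same two-step architecture as the paper: handle the $\ell_\infty$-sum coordinatewise, then realize the $c_0$-sum as a $2$-Lipschitz retract of the $\ell_\infty$-sum and compose. Your $\ell_\infty$-sum argument is correct: extending each projection $\pi_i$ to a $\lambda$-Lipschitz map $\widetilde{\pi_i}:N\to X_i$ invokes the standard equivalence between being an absolute $\lambda$-Lipschitz retract and the $\lambda$-Lipschitz extension property, whereas the paper instead embeds $\left(\bigoplus_I X_i\right)_{\ell_\infty}$ isometrically into $\ell_\infty(I\times J)$ and retracts coordinatewise; these have the same substance. (One small point: uniform boundedness of $(\widetilde{\pi_i}(y))_{i}$ uses not just the uniform Lipschitz constant but also the common anchor $\widetilde{\pi_i}(0)=\pi_i(0)=0$, which your setup provides.) The final composition, giving an absolute $2\lambda$-Lipschitz retract structure on the $c_0$-sum once $\rho$ exists, is also fine.

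The gap is in the one step carrying real content: the retraction $\rho$ of the $\ell_\infty$-sum onto the $c_0$-sum. Your concrete suggestion --- a coordinatewise radial contraction ``by an amount depending on $\sup_i\|x_i\|$'' --- provably cannot work. If the contraction amount is a function of $\|x\|_\infty=\sup_i\|x_i\|$ alone, then fixing the $c_0$-sum pointwise forces that function to vanish at every $t>0$ (for each $t>0$ the $c_0$-sum contains nonzero points of norm $t$, and any radial contraction with positive threshold moves every nonzero point), so $\rho$ would be the identity map and would not take values in the $c_0$-sum. The idea that resolves this, which is the crux of the paper's proof (following \cite[Example 1.5]{BeLiBook}), is to use as threshold the distance to the $c_0$-sum itself: setting $d(x):=\dist\big(x,\left(\bigoplus_I X_i\right)_{c_0}\big)$, one defines $R(x)_i=0$ if $d(x)>\|x_i\|$ and $R(x)_i=\big(1-\tfrac{d(x)}{\|x_i\|}\big)x_i$ otherwise. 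This fixes the $c_0$-sum since $d$ vanishes there, but two nontrivial verifications remain, both absent from your proposal: (i) $R(x)$ actually lies in the $c_0$-sum, which requires showing that $\{i\in I:\|x_i\|\geq d(x)+\epsilon\}$ is finite for every $\epsilon>0$ (a short contradiction argument using an almost-nearest point of the $c_0$-sum); and (ii) $R$ is $2$-Lipschitz, a case analysis that occupies most of the paper's proof. So while you correctly located the difficulty, the key idea (threshold $=\dist(x,\cdot)$, i.e.\ the tail-supremum of the coordinate norms rather than the full supremum) and its verification are missing, and the one hint you give points in a direction that fails.
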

	\begin{proof}Denote $X = \left(\bigoplus_{I} X_i\right)_{\ell_\infty}$ and $Y = \left(\bigoplus_{I} X_i\right)_{c_0}$.
		Proving that $X$ is $\lambda$-Lipschitz retract is coordinate-wise. That is, we first pick a set $J$ such that for every $i\in I$ there exists an isometry $T_i:X_i\to \ell_\infty(J)$ and a $\lambda$-Lipschitz retraction $R_i:\ell_\infty(J)\to T_i(X_i)$. Then, we easily check that $T:X\to \left(\bigoplus_{I} \ell_\infty(J)\right)_{\ell_\infty}$ given by $T(x)(i):=T_i(x(i))$ is an isometry and $R:\left(\bigoplus_{I} \ell_\infty(J)\right)_{\ell_\infty}\to T(X)$ given by $R(x)(i):=R_i(x(i))$ is $\lambda$-Lipschitz retraction. Thus, $X$ is isometric to a $\lambda$-Lipschitz retract of $\left(\bigoplus_{I} \ell_\infty(J)\right)_{\ell_\infty} = \ell_\infty(I\times J)$, which is well-known to be an absolute $1$-Lipschitz retract.
		
		Thus, in order to prove $Y$ is absolute $2\lambda$-Lipschitz retract it suffices to show that $Y$ is $2$-Lipschitz retract of $X$. Let $d(x)=\dist(x,Y)$ for every $x\in X$. The required retraction is given similarly as in \cite[Example 1.5]{BeLiBook} by the formula
		\[
		R(x)_i:=\left\{
		\begin{array}{ll}
			0& \text{ if }d(x)>\|x_i\|_{X_i}\\
			\Big(1 - \frac{d(x)}{\|x_i\|_{X_i}}\Big)\cdot x_i &\text{ if }d(x)\leq \|x_i\|_{X_i}.
		\end{array} \right.
		\]
		Now, one easily checks this formula gives really a $2$-Lipschitz retraction. Since this is omitted even in \cite[Example 1.5]{BeLiBook}, for the convenience of the reader we give the details below.
		
		First, we claim that, for every $\epsilon>0$ and $x\in X$, if $x=(x_i)_i$, then the set $I_{\epsilon}(x)=\{i\in I\colon \|x_i\|_{X_i}\geq d(x)+\epsilon\}$ is finite. Indeed, given $\epsilon>0$ we let $a=(a_i)_i\in Y$ be such that $d(x)\leq \|x-a\|<d(x)+\epsilon$.
		If $I_{\epsilon}(x)$ is infinite, then for every $\delta>0$ there is $i\in I_{\epsilon}(x)$ such that $\|a_i\|_{X_i}<\delta$. Hence
		\[d(x)+\epsilon\leq \|x_i\|_{X_i}\leq \|x_i-a_i\|_{X_i}+\|a_i\|_{X_i}\leq \|x-a\|+\delta.\]
		Hence $d(x)+\epsilon\leq  \|x-a\|$ which is a contradiction. This proves the claim and shows that $R(x)\in Y$ for every $x\in X$.
		
		In order to check $R$ is Lipschitz, pick $x,y\in X$ and $i\in I$. We may without loss of generality assume that $d(x)\leq \|x_i\|_{X_i}$. If $d(y) > \|y_i\|$ then we obtain
		\[\begin{split}
			\|R(x)_i-R(y)_i\|&=\|x_i\|-d(x)= (\|x_i\|-\|y_i\|)+(\|y_i\|-d(y))+(d(y)-d(x))\\
			&\leq |\|x_i\|-\|y_i\||+ |d(y)-d(x)| \leq  2\|x-y\|.
		\end{split}\]
		On the other hand, if $d(y) \leq \|y_i\|$ then we have
		\[\begin{split}
			\|R(x)_i-R(y)_i\|&=\Big\|\frac{\|x_i\| - d(x)}{\|x_i\|}x_i - \frac{\|x_i\| - d(x)}{\|x_i\|}y_i\Big\|\\
			& \leq \Big(\frac{\|x_i\|-d(x)}{\|x_i\|}\Big)\|x_i-y_i\| + \|y_i\|\cdot\Big|\frac{\|x_i\| - d(x)}{\|x_i\|} - \frac{\|y_i\| - d(y)}{\|y_i\|}\Big|\\
			& = \Big(1 - \frac{d(x)}{\|x_i\|}\Big)\|x_i-y_i\| + \Big|d(y) - d(x)\frac{\|y_i\|}{\|x_i\|}\Big|\\
			& \leq \Big(1 - \frac{d(x)}{\|x_i\|}\Big)\|x_i-y_i\| + |d(y)-d(x)| + \frac{d(x)}{\|x_i\|}|\|x_i\| - \|y_i\||\\
			& \leq 2\|x_i-y_i\| + \frac{d(x)}{\|x_i\|}(|\|x_i\| - \|y_i\|| - \|x_i-y_i\|)\leq 2\|x_i-y_i\|.
		\end{split}\]
	\end{proof}
	
	The next result is a slight modification of \cite[Proposition 5.4]{Kal11}.
	
	\begin{prop}\label{Prop:EmbeddingC0(ellinfty)intoellinfty}
		There is a $2$-Lipschitz embedding of $(\bigoplus_{2^{\omega}}\ell_\infty)_{c_0}$ into $\ell_\infty$.
	\end{prop}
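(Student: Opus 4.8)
The plan is to build the embedding out of a \emph{countable} family of $1$-Lipschitz functionals obtained by taking suprema of fibre-coordinates over clopen subsets of the index set, exactly in the spirit of \cite[Proposition 5.4]{Kal11}; the only modification is that the fibres are now $\ell_\infty$ rather than scalars, so I index additionally by the coordinate inside each fibre. Write $Y=\left(\bigoplus_{t\in K}\ell_\infty\right)_{c_0}$ with $K=2^\omega$ the Cantor set, and for $x\in Y$ let $x_t\in\ell_\infty$ denote its $t$-th fibre. Let $\varGamma$ be the countable set of pairs $(U,n)$ with $U\subseteq K$ clopen and $n\in\Nat$, and for $(U,n)\in\varGamma$ set
\[
\Phi_{(U,n)}(x):=\sup_{t\in U}x_t(n),\qquad \Psi_{(U,n)}(x):=\sup_{t\in U}\big(-x_t(n)\big).
\]
Each is readily checked to be $1$-Lipschitz and to vanish at $0$, so the map $\Theta:Y\to\ell_\infty(\varGamma\sqcup\varGamma)=\ell_\infty$ whose coordinates are all the $\Phi_{(U,n)}$ and $\Psi_{(U,n)}$ is $1$-Lipschitz. (Alternatively one checks that $\Span\{\Phi_{(U,n)},\Psi_{(U,n)}\}$ separates the points of $Y$ uniformly with constant $2$ and invokes Proposition~\ref{prop:EquivalenceEmbeddingSeparatesUniformly}.) What then remains is the lower estimate $\|\Theta(x)-\Theta(y)\|_\infty\geq\tfrac12\|x-y\|_Y$, which yields $\Lip(\Theta)\cdot\Lip(\Theta^{-1}|_{\rng\Theta})\leq 2$.

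To prove the lower estimate I would fix $x\neq y$, put $D=\|x-y\|_Y$ and $\varepsilon\in(0,D)$, and choose $(t_0,n_0)$ with $|x_{t_0}(n_0)-y_{t_0}(n_0)|>D-\varepsilon$; after possibly swapping $x,y$ (the target is symmetric) I may assume $p-q>D-\varepsilon$, where $p=x_{t_0}(n_0)$ and $q=y_{t_0}(n_0)$. The role of the $c_0$-structure is to isolate $t_0$: choosing $\delta\in(0,\tfrac{D-\varepsilon}{2})$, the set $F=\{t\in K: \|x_t\|_{\ell_\infty}\geq\delta \text{ or } \|y_t\|_{\ell_\infty}\geq\delta\}$ is finite, so by total disconnectedness of $K$ there is a clopen $U\ni t_0$ with $U\cap F\subseteq\{t_0\}$. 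Then $|x_t(n_0)|,|y_t(n_0)|<\delta$ for every $t\in U\setminus\{t_0\}$, and since such $t$ exist ($K$ is perfect) the four numbers $S_x,S_y$ (suprema) and $I_x,I_y$ (infima) of $x_t(n_0),y_t(n_0)$ over $t\in U\setminus\{t_0\}$ all lie in $[-\delta,\delta]$. This reduces matters to a scalar computation, since $\Phi_{(U,n_0)}(x)=\max(p,S_x)$, $\Phi_{(U,n_0)}(y)=\max(q,S_y)$, while $\Psi_{(U,n_0)}(x)=-\min(p,I_x)$ and $\Psi_{(U,n_0)}(y)=-\min(q,I_y)$.

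The hard part is the case where $p$ and $q$ straddle $0$: then neither supremum alone detects the gap $p-q$ (one can have $\max(p,S_x)\approx\delta$ with $p$ tiny and $q$ very negative), which is precisely why both the supremum family $\Phi$, the infimum family $\Psi$, and the constant $2$ are unavoidable. I would isolate two one-sided bounds,
\[
\Phi_{(U,n_0)}(x)-\Phi_{(U,n_0)}(y)\ \geq\ \max(p,S_x)-\delta,\qquad \Psi_{(U,n_0)}(y)-\Psi_{(U,n_0)}(x)\ \geq\ \min(p,-\delta)-q,
\]
where the first uses $q<\delta$ (so $\max(q,S_y)\le\delta$) and the second uses $\min(q,I_y)\le q$ together with $I_x\ge-\delta$. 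A short exhaustive case analysis on the position of $p,q$ relative to $\pm\delta$ then finishes things: if $q\geq\delta$ the first difference already exceeds $D-\varepsilon$; if $p<-\delta$ the second does; and in the remaining regime ($q<\delta$, $p\geq-\delta$) adding the two displayed bounds gives
\[
\big(\Phi_{(U,n_0)}(x)-\Phi_{(U,n_0)}(y)\big)+\big(\Psi_{(U,n_0)}(y)-\Psi_{(U,n_0)}(x)\big)\ \geq\ (p-q)-2\delta\ >\ (D-\varepsilon)-2\delta,
\]
so one of the two is at least $\tfrac{D-\varepsilon}{2}-\delta$. In every case $\|\Theta(x)-\Theta(y)\|_\infty\geq\tfrac{D-\varepsilon}{2}-\delta$, and letting $\varepsilon,\delta\to0$ yields $\|\Theta(x)-\Theta(y)\|_\infty\geq\tfrac12\|x-y\|_Y$, which is the required estimate; in particular $\Theta$ is injective and is a $2$-Lipschitz embedding into $\ell_\infty$.
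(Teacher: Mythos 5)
Your proof is correct and takes essentially the same approach as the paper's: the same countable family of test functions (suprema of a fixed fibre-coordinate, in both signs, over a countable basis of neighborhoods in the index set), the same use of the $c_0$-structure to isolate a near-norming fibre, and the same factor-$2$ trick in which the sum of two coordinate differences dominates the gap, so one of them is at least half of it. The differences are cosmetic: the paper realizes $2^\omega$ as $\Rea$ and uses rational intervals in place of your clopen subsets of the Cantor set, and it truncates its test functions with $\max\{\cdot,0\}$ so that the identity $\max\{u,0\}-\max\{-u,0\}=u$ replaces your explicit case analysis.
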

	\begin{proof}
		Let $X=(\bigoplus_{\Rea}\ell_\infty)_{c_0}$. An arbitrary element $x\in X$ will be denoted $x=(x_t)_{t \in \Rea}$ 
		where $x_t=(x^t_n)_{n\in \Nat}\in \ell_\infty$ for every $t \in \Rea$. 
		For each $s \in \Rea$, by $\iota_{s}:\ell_\infty\to X$ we denote the canonical inclusion in the $s$-coordinate, that is, $\iota_s(u)=(x_t)_t$ where $x_s=u$ and $x_t=0$ whenever $s \neq t$.
		
		Let $\varLambda$ be the set of all tuples $(a,b,c,m)$ where $a\in \{-1,1\}$,\ $b,c\in \Rat$ are such that $b<c$, and $m\in \Nat$. It is evident that $\varLambda$ is a countable set. For each  $(a,b,c,m)\in \varLambda$ we consider the function
		$f_{a,b,c,m}:X \to  \Rea$ given by
		\[f_{a,b,c,m}(x)=\sup\left\{\max\{ax^{t}_{m},0\}: t \in (b,c)\right\}.\]
		It is easily seen that every function $f_{a,b,c,m}$ is an element of $B_{\Lip_0(X)}$. Therefore, the formula $F(x) = (f_{a,b,c,m}(x))_{(a,b,c,m)\in  \varLambda}$ defines a $1$-Lipschitz mapping from $X$ to $\ell_\infty(\varLambda)$. Now, we shall prove that $F$ is a Lipschitz embedding. Let $x, y \in  X$ and $\epsilon > 0$ be arbitrary and let 
		$s_0\in \Rea$, $m_0\in \Nat$ and $a\in \{-1,1\}$ be such that 
		\[a(x^{s_0}_{m_0}-y^{s_0}_{m_0})\geq \|x-y\|-\frac{\epsilon}{3}.\]
		
		Then we pick a finite subset $\{t_1 , \ldots , t_n \}$ containing $s_0$ and so that  
		\[\max\left\{\left\|x -\sum_{j=1}^n \iota_{t_j}(x_{t_j})\right\|,  \left\|y -\sum_{j=1}^n \iota_{t_j}(y_{t_j})\right\|\right\}<\frac{\epsilon}{6}.\]
		Additionally, we fix rationals $b$, $c$ such that $(b,c)\cap \{t_1,\ldots,t_n\}=\{s_0\}$. Since $f_{a,b,c,m_0}$ and $f_{-a,b,c,m_0}$ are elements of $B_{\Lip_0(X)}$ we have
		\[|f_{a,b,c,m_0}(x) - \max\{ax^{s_0}_{m_0},0\}| 
		= \Big|f_{a,b,c,m_0}(x) - f_{a,b,c,m_0}\Big(\sum_{j=1}^n \iota_{t_j}(x_{t_j})\Big)\Big|< \frac{\epsilon}{6}\]
		and
		\[|f_{-a,b,c,m_0}(x) -\max\{-ax^{s_0}_{m_0},0\} | = \Big|f_{-a,b,c,m_0}(x) - f_{-a,b,c,m_0}\Big(\sum_{j=1}^n \iota_{t_j}(x_{t_j})\Big)\Big|< \frac{\epsilon}{6}.\]
		Hence,
		\begin{align*}
			|f_{a,b,c,m_0}(x) - f_{-a,b,c,m_0} (x) -ax^{s_0}_{m_0}|&= |f_{a,b,c,m_0}(x) - f_{-a,b,c,m_0} (x) \\
			&-(\max\{ax^{s_0}_{m_0},0\}-\max\{-ax^{s_0}_{m_0},0\})|< \frac{\epsilon}{3}.
		\end{align*}
		In a similar manner, we obtain
		\[|f_{a,b,c,m_0}(y) - f_{-a,b,c,m_0} (y) -ay^{s_0}_{m_0}| < \frac{\epsilon}{3}.\]
		Hence
		\begin{align*}
			f_{a,b,c,m_0}(x) - f_{a,b,c,m_0}(y) - f_{-a,b,c,m_0} (x) + f_{-a,b,c,m_0} (y) & >a(x^{s_0}_{m_0}-y^{s_0}_{m_0})-\frac{2\epsilon}{3}\\ & \geq \|x-y\|-\epsilon
		\end{align*}
		and either
		\[|f_{a,b,c,m_0}(x) - f_{a,b,c,m_0}(y)| > \frac{1}{2}(\|x-y\| - \epsilon)\]
		or
		\[|f_{-a,b,c,m_0} (x) - f_{-a,b,c,m_0} (y)| > \frac{1}{2}(\|x-y\| - \epsilon).\]
		
		We deduce that the function $F:X\to \ell_\infty(\varLambda)$ satisfies
		\[\frac{1}{2}\|x-y\|\leq \|F(x)-F(y)\| \leq \|x-y\|\]
		for all $x,y\in X$, which establishes the result.
	\end{proof}
	
	Now, we are ready to prove above announced result.
	
	\begin{thm}\label{thm:c0Andellinfty}
		$\F((\bigoplus_{2^{\omega}}\ell_\infty)_{c_0})$ and $\F(\ell_\infty)$ are linearly isomorphic.
	\end{thm}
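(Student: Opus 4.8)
The plan is to deduce the linear isomorphism from Pełczyński's decomposition method (see \cite{AlbiacKalton}), applied to $\mathcal E:=\F(\ell_\infty)$ and $\mathcal F:=\F\big((\bigoplus_{2^\omega}\ell_\infty)_{c_0}\big)$. Concretely I would verify three things: that $\mathcal F$ is isomorphic to a complemented subspace of $\mathcal E$, that $\mathcal E$ is isomorphic to a complemented subspace of $\mathcal F$, and that $\mathcal E$ is isomorphic to its own $\ell_1$-sum. Throughout I rely on the functoriality of the free-space construction: a base-point preserving Lipschitz map $f\colon M\to N$ induces a bounded linear operator $\hat f\colon \F(M)\to\F(N)$ with $\widehat{g\circ f}=\hat g\circ\hat f$ and $\widehat{\Id}=\Id$. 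In particular, if $\Phi\circ\phi=\Id$ for base-point preserving Lipschitz maps $\phi,\Phi$, then $\hat\Phi\hat\phi=\Id$, so $\hat\phi$ is an isomorphic embedding and $\hat\phi\hat\Phi$ is a bounded linear projection onto its range $\hat\phi(\F(M))\cong\F(M)$.

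First I would establish that $\mathcal F$ is complemented in $\mathcal E$. Write $X:=(\bigoplus_{2^\omega}\ell_\infty)_{c_0}$. By Proposition~\ref{Prop:EmbeddingC0(ellinfty)intoellinfty} there is a base-point preserving bilipschitz embedding $\phi\colon X\to\ell_\infty$, and by Proposition~\ref{Prop:C0ellinftyisanAbsoluteRetract} the space $X$ is an absolute Lipschitz retract. Since $X$ is an absolute Lipschitz retract, the Lipschitz map $\phi^{-1}\colon\phi(X)\to X$, defined on the subset $\phi(X)$ of $\ell_\infty$, extends to a Lipschitz map $\Phi\colon\ell_\infty\to X$, and by construction $\Phi\circ\phi=\Id_X$. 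Passing to free spaces gives $\hat\Phi\circ\hat\phi=\Id_{\mathcal F}$, so $\hat\phi\hat\Phi$ is a bounded linear projection of $\mathcal E$ onto a subspace isomorphic to $\mathcal F$.

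Next I would establish that $\mathcal E$ is complemented in $\mathcal F$. Fixing a coordinate $s\in 2^\omega$, the canonical isometric inclusion $\iota_s\colon\ell_\infty\to X$ and the norm-one coordinate projection $P_s\colon X\to\ell_\infty$ are linear (hence base-point preserving Lipschitz) and satisfy $P_s\circ\iota_s=\Id_{\ell_\infty}$. As above, $\hat P_s\circ\hat\iota_s=\Id_{\mathcal E}$, so $\hat\iota_s\hat P_s$ is a bounded linear projection of $\mathcal F$ onto a subspace isomorphic to $\mathcal E$. For the self-sum property I would invoke the theorem of Kaufmann that $\F(Z)\cong\big(\bigoplus_{n\in\Nat}\F(Z)\big)_{\ell_1}$ for every infinite-dimensional Banach space $Z$; applied to $Z=\ell_\infty$ this yields $\mathcal E\cong\big(\bigoplus_{n\in\Nat}\mathcal E\big)_{\ell_1}$, and in particular $\mathcal E\cong\mathcal E\oplus_1\mathcal E$.

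Finally, I would run the standard absorption argument. Writing $\mathcal F\cong\mathcal E\oplus C$ and $\mathcal E\cong\mathcal F\oplus D$ from the two complementation facts, the $\ell_1$-self-similarity of $\mathcal E$ gives $\mathcal E\oplus\mathcal F\cong\mathcal E$ (indeed $\mathcal E\cong\big(\bigoplus_{\Nat}\mathcal E\big)_{\ell_1}\cong\big(\bigoplus_{\Nat}(\mathcal F\oplus D)\big)_{\ell_1}\cong\mathcal F\oplus\big(\bigoplus_{\Nat}\mathcal E\big)_{\ell_1}\cong\mathcal F\oplus\mathcal E$ after the usual regrouping), whence $\mathcal E\cong\mathcal E\oplus\mathcal F\cong\mathcal E\oplus\mathcal E\oplus C\cong\mathcal E\oplus C\cong\mathcal F$. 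The routine parts are the two complementation statements, which follow immediately from the two preceding propositions via functoriality. The substantive ingredient — and the step I expect to be the main obstacle toward a self-contained proof — is the $\ell_1$-self-sum of $\F(\ell_\infty)$: the free-space functor does not interact simply with $c_0$- or $\ell_\infty$-sums of the underlying Banach spaces, so this self-similarity cannot be read off from the sum structure of $\ell_\infty$ and genuinely requires Kaufmann's decomposition of free spaces over infinite-dimensional Banach spaces.
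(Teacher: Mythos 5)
Your proposal is correct and takes essentially the same route as the paper: the paper likewise combines Propositions~\ref{Prop:C0ellinftyisanAbsoluteRetract} and~\ref{Prop:EmbeddingC0(ellinfty)intoellinfty} to complement $\F\big((\bigoplus_{2^{\omega}}\ell_\infty)_{c_0}\big)$ in $\F(\ell_\infty)$, notes the evident reverse complementation, invokes Kaufmann's $\ell_1$-self-sum of $\F(\ell_\infty)$ from \cite{Kau15}, and concludes by the Pe{\l}czy\'nski decomposition method. Your write-up simply makes explicit the functoriality, retract-extension and absorption details that the paper leaves implicit.
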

	\begin{proof}
		From Propositions \ref{Prop:C0ellinftyisanAbsoluteRetract} and \ref{Prop:EmbeddingC0(ellinfty)intoellinfty} we deduce that 
		$\F(\ell_\infty)$ has a complemented subspace isomorphic to $\F((\bigoplus_{2^{\omega}}\ell_\infty)_{c_0})$. Since it is evident that 
		$\F((\bigoplus_{2^{\omega}}\ell_\infty)_{c_0})$ has a complemented subspace isomorphic to $\F(\ell_\infty)$ and it is well known that 
		$\F(\ell_\infty)$ isomorphic to its $\ell_1$-sum (see \cite{Kau15}), the thesis follows by the Pe{\l}czy\'nski decomposition method.
	\end{proof}
	
	Let us mention the following characterization, which follows from the above mentioned results.
	
	\begin{cor}\label{cor:reduction}
		Let $M$ be a metric space and $X$ is one of the Banach spaces
		\[\ell_\infty,\quad  \Big(\bigoplus\nolimits_{2^{\omega}}\ell_\infty\Big)_{c_0},\quad \Big(\bigoplus\nolimits_{2^{\omega}}\ell_\infty\Big)_{\ell_1}.
		\]
		Then the following conditions are equivalent.
		\begin{enumerate}[label=(\alph*)]
			\item\label{it:sepLip} $(\Lip_0(M),w^*)$ is separable.
			\item\label{it:bddInj} There is a bounded linear injection of $\F(M)$ into $X$.
			\item\label{it:LipBddInj} There is a bounded linear injection of $\F(M)$ into $\F(X)$.
		\end{enumerate} 
	\end{cor}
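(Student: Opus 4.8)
The plan is to prove the cyclic chain \ref{it:sepLip}$\Rightarrow$\ref{it:bddInj}$\Rightarrow$\ref{it:LipBddInj}$\Rightarrow$\ref{it:sepLip}, exploiting two features common to all three candidate spaces $X$. The first is that $\ell_\infty$ embeds linearly isometrically into each of them through the single-coordinate inclusion $u\mapsto(u,0,0,\dots)$, which is isometric for the $\ell_\infty$-, $c_0$- and $\ell_1$-norms alike. The second is that $\F(X)$ has a $w^*$-separable dual, equivalently that $(\Lip_0(X),w^*)$ is separable, for each of the three choices of $X$.

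For \ref{it:sepLip}$\Rightarrow$\ref{it:bddInj} I would note that separability of $(\Lip_0(M),w^*)$ is exactly $w^*$-separability of $\F(M)^*$, so Corollary~\ref{cor:charactWeakStarSep} applied to the Banach space $\F(M)$ produces a bounded linear injection $\F(M)\to\ell_\infty$; composing with the single-coordinate embedding $\ell_\infty\hookrightarrow X$ gives \ref{it:bddInj}. For \ref{it:bddInj}$\Rightarrow$\ref{it:LipBddInj} I would apply Theorem~\ref{thm:kaltonadaptation} with $Y=\F(M)$: since a Lipschitz-free space has the lifting property, the bounded linear injection $\F(M)\to X$ upgrades to a bounded linear injection $\F(M)\to\F(X)$.

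For the closing implication \ref{it:LipBddInj}$\Rightarrow$\ref{it:sepLip}, given a bounded linear injection $S\colon\F(M)\to\F(X)$ its adjoint $S^*\colon\Lip_0(X)\to\Lip_0(M)$ is $w^*$-$w^*$ continuous and, $S$ being injective, has $w^*$-dense range; hence separability of $(\Lip_0(X),w^*)$ descends to separability of $(\Lip_0(M),w^*)$. It then remains to confirm the second structural fact above for each $X$: for $X=\ell_\infty$ because $\F(\ell_\infty)$ embeds isometrically into $\ell_\infty$ by \cite[Proposition 5.1]{Kal11} (cf.\ Remark~\ref{rem:corFromCor}); for $X=\big(\bigoplus_{2^\omega}\ell_\infty\big)_{c_0}$ because $\F(X)$ is linearly isomorphic to $\F(\ell_\infty)$ by Theorem~\ref{thm:c0Andellinfty}; and for $X=\big(\bigoplus_{2^\omega}\ell_\infty\big)_{\ell_1}$ because this space embeds isometrically into $\ell_\infty$ by Proposition~\ref{prop:ell1Injection}, so that $\F(X)\hookrightarrow\F(\ell_\infty)\hookrightarrow\ell_\infty$.

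The main obstacle is that the $c_0$-sum must be handled differently from the other two spaces. Unlike $\ell_\infty$ and the $\ell_1$-sum, the space $\big(\bigoplus_{2^\omega}\ell_\infty\big)_{c_0}$ does \emph{not} itself have a $w^*$-separable dual, since it contains an isometric copy of $c_0(2^\omega)$ and no countable family of (countably supported) functionals in $\ell_1(2^\omega)$ can separate its unit vectors. Consequently one cannot verify \ref{it:bddInj} for the $c_0$-sum by injecting $X$ into $\ell_\infty$; the cycle above is arranged so that the $c_0$-sum enters only through $\F(X)$, where the nontrivial identification in Theorem~\ref{thm:c0Andellinfty} restores the argument. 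Keeping track of which statements are about $X$ and which are about $\F(X)$ is the only delicate point.
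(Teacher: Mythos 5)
Your proposal is correct and takes essentially the same route as the paper: the identical cycle \ref{it:sepLip}$\Rightarrow$\ref{it:bddInj}$\Rightarrow$\ref{it:LipBddInj}$\Rightarrow$\ref{it:sepLip}, with the first implication via the injection $\F(M)\to\ell_\infty$ composed with the coordinate copy of $\ell_\infty$ inside $X$, the second via Theorem~\ref{thm:kaltonadaptation}, and the third via the $w^*$-separability of $\Lip_0(X)$ established exactly as in the paper (Proposition~\ref{prop:dualUnitBall} for $\ell_\infty$, Proposition~\ref{prop:ell1Injection} for the $\ell_1$-sum, Theorem~\ref{thm:c0Andellinfty} for the $c_0$-sum), your adjoint argument being precisely the paper's Corollary~\ref{cor:intoReduction}. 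Your added observation that $\big(\bigoplus_{2^\omega}\ell_\infty\big)_{c_0}$ has no $w^*$-separable dual, and hence can only enter the argument through $\F(X)$, is also correct and accurately explains why the paper routes that case through Theorem~\ref{thm:c0Andellinfty}.
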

	\begin{proof}Condition \ref{it:sepLip} implies there is a bounded linear injection of $\F(M)$ into $\ell_\infty$ which is isometric to a subspace of $X$, so \ref{it:bddInj} holds. Implication \ref{it:bddInj}$\Rightarrow$\ref{it:LipBddInj} follows from Theorem~\ref{thm:kaltonadaptation}. Finally, \ref{it:LipBddInj}$\Rightarrow$\ref{it:sepLip} follows easily from the fact that $\F(X)$ has a $w^*$-separable dual (for $X=\ell_\infty$ it is a direct consequence of Proposition~\ref{prop:dualUnitBall}, for $X = (\bigoplus\nolimits_{2^{\omega}}\ell_\infty)_{\ell_1}$ it follows from Propositions~\ref{prop:dualUnitBall} and \ref{prop:ell1Injection}, and finally for $X = (\bigoplus\nolimits_{2^{\omega}}\ell_\infty)_{c_0}$ it follows from the already proven case when $X=\ell_\infty$ and from Theorem~\ref{thm:c0Andellinfty}).
	\end{proof}
	
	\begin{remark}
		Note that similarly as Theorem~\ref{thm:c0Andellinfty} we may also obtain the following generalization. If $\lambda>0$ is given and $X_\alpha$, $\alpha<2^\omega$ are absolute $\lambda$-Lipschitz retracts which are $\lambda$-Lipschitz isomorphic to a subset of $\ell_\infty$, then $\F((\bigoplus_{\alpha<2^{\omega}} X_\alpha)_{c_0})$ is isomorphic to a complemented subspace of $\F(\ell_\infty)$.
		
		We note that it is open whether $\ell_\infty$ Lipschitz embeds into $c_0(2^\omega)$ (see \cite{Pelant} and also \cite{HJS24}, where more information may be found). If it was the case, then in the above we would even obtain that $\F((\bigoplus_{\alpha<2^{\omega}} X_\alpha)_{c_0})$ and $\F(\ell_\infty)$ are linearly isomorphic. We would also obtain that $\F(\ell_\infty)$ and $\F(c_0(2^\omega))$ are linearly isomorphic, which would solve \cite[Problem 237]{GMZ16} (see also \cite[Remark 13]{CK21}).
	\end{remark}
	
	\section{Concluding remarks}
	\label{sec-Conclusion}
	
	Let us mention several possible ways/reductions which might help proving Conjecture~\ref{conj:main}.
	\begin{itemize}
		\item Since it is known that $\ell_\infty/c_0$ contains isometric copy of all the Banach spaces of density $\omega_1$ and therefore under CH it is an isometrically universal Banach space of density continuum (see e.g. \cite{P63} and \cite{BK12} for more details), it seems to be interesting to find out whether $\F(\ell_\infty/c_0)$ has $w^*$-separable dual (if it does, then consistently every Lipschitz-free space of density at most continuum does as well).
		\item In Proposition~\ref{prop:sufficient} we formulated a general metric condition guaranteeing $w^*$-separability of $\Lip_0(M)$, but we were able to satisfy this condition only in locally separable complete metric spaces and we do not know whether this condition can be satisfied in every metric space of density continuum.
		\item For any metric space $M$ there exists a bounded metric space $B(M)$ such that both $M$ and $B(M)$ are topologically homeomorphic and $\F(M)$ is linearly isomorphic with $\F(B(M))$, see \cite{AACD22}. Thus, it would suffice to prove Conjecture~\ref{conj:main} for bounded metric spaces.
		\item Another possible approach could be to find for a metric space $M$ of density continuum some metric spaces $(M_i)_{i\in I}$ with $|I|\leq 2^\omega$ in such a way that each $\F(M_i)$ has $w^*$-separable dual and $\F(M)$ admits an injective operator into $\Big(\bigoplus_{I} \F(M_i)\Big)_{\ell_1}$ (if this is possible, then Corollary~\ref{cor:charactWeakStarSep} would imply that $\Lip_0(M)$ is $w^*$-separable).
	\end{itemize}
	
	While being interested in the solution of Conjecture~\ref{conj:main} as a byproduct of our considerations we discovered some facts which we find interesting.
	
	\begin{remark}
		Given a set $I$ with $|I|\leq 2^\omega$, $\F(C([0,\omega_1]))$ does not even uniformly embed into $\F(c_0(I))$. This can be deduced from the following two observations:
		\begin{itemize}
			\item first, we recall that $c_0(I)$ Lipschitz embeds into $\ell_\infty$ and therefore $\F(c_0(I))$ is isomorphic to a subspace of $\ell_\infty$ (see e.g. \cite[Proposition 2.9]{AGP24} or one can also use the fact that $\F(c_0(I))$ is linearly isomorphic to a subspace of $\F(\ell_\infty)$ which isometrically embeds into $\ell_\infty$ by Proposition~\ref{prop:dualUnitBall} or by \cite[Proposition 5.1]{Kal11});
			\item on the other hand, $C([0,\omega_1])$ does not uniformly embed into $\ell_\infty$ (see \cite[Theorem 4.2]{Kal11}), which implies that $\F(C([0,\omega_1]))$ does not uniformly embed into $\ell_\infty$ (since if $f:\F(C([0,\omega_1]))\to \ell_\infty$ is a uniform embedding, then $f\circ \delta:C([0,\omega_1])\to \ell_\infty$).
		\end{itemize}
		We find it interesting as in \cite[Corollary 4]{CG23} it is proved that $\F(C(K))$ and $\F(c_0(I))$ have linearly isomorphic duals whenever $K$ is a compact topological space with weight $|I|$.
	\end{remark}
	
	\begin{remark}\label{rem:sepNotSep}Our results give us as a byproduct quite a large list of Lipschitz free spaces which have a $w^*$-separable dual, but not $w^*$-separable dual unit ball. Indeed, it suffices to pick any metric space of density at most continuum which does not isometrically embed into $\ell_\infty$ (then $\F(M)$ does not have $w^*$-separable dual unit ball by Proposition~\ref{prop:dualUnitBall}) and for which we are able to prove that $(\Lip_0(M),w^*)$ is separable. Examples of such spaces include:
		\begin{itemize}
			\item the space $C([0,\omega_1])$ (which does not Lipschitz embed into $\ell_\infty$ by \cite[Theorem 4.2]{Kal11} and which has a projectional skeleton),
			\item any Banach space with a projectional skeleton of density continuum which contains a subspace isomorphic to $C([0,\omega_1])$,
			\item the uniformly discrete metric space $M$ constructed in \cite[Example 2.12]{AGP24}.
		\end{itemize}
	\end{remark}

\end{document}